\DeclareFontFamily{U}{astrosym}{}
\DeclareFontShape{U}{astrosym}{m}{n}{<-> astrosym}{}
\DeclareFontFamily{U}{dancers}{}
\DeclareFontShape{U}{dancers}{m}{n}{<-> dancers}{}
\newtheorem{theorem}{Theorem}[section]
\newtheorem{definition}[theorem]{Definition}
\newtheorem{lemma}[theorem]{Lemma}
\newtheorem{corollary}[theorem]{Corollary}
\newtheorem{remark}[theorem]{Remark}
\newtheorem{proposition}[theorem]{Proposition}
\numberwithin{equation}{section}
\newcommand{\yy}{{\mathcal Y}}
\newcommand{\s}{{\sigma}}
\newcommand{\id}{{\rm Id}}
\newcommand{\jj}{{\vec \jmath}}
\newcommand{\ii}{{\vec \imath}}
\newcommand{\li}{{\mathfrak h}}
\newcommand{\ol}{\overline}
\newcommand{\ad}{{\rm ad}}
\newcommand{\tT}{{\mathtt{T}}}
\newcommand{\rot}{{\mathrm{rot}}}
\newcommand{\bs}{{\boldsymbol\sigma}}
\newcommand{\wt}[1]{{\widetilde{#1}}}
\newcommand{\wtOmega}{{\widetilde{\Omega}}}
\newcommand{\wtcH}{{\widetilde{\mathcal H}}}
\newcommand{\wtcN}{{\widetilde{\mathcal N}}}
\newcommand{\wtcP}{{\widetilde{\mathcal P}}}
\newcommand{\wtcM}{\widetilde{\mathcal M}}
\newcommand{\whcH}{{\widehat{\mathcal H}}}
\newcommand{\whOmega}{{\widehat \Omega}}
\newcommand{\bd}{{\vec{\bf 2}}}
\newcommand{\C}{{\mathbb C}}
\newcommand{\N}{{\mathbb N}}
\newcommand{\Q}{{\mathbb Q}}
\newcommand{\R}{{\mathbb R}}
\newcommand{\T}{{\mathbb T}}
\newcommand{\Z}{{\mathbb Z}}
\newcommand{\cA}{{\mathcal A}}
\newcommand{\cB}{{\mathcal B}}
\newcommand{\cC}{{\mathcal C}}
\newcommand{\cD}{{\mathcal D}}
\newcommand{\cF}{{\mathcal F}}
\newcommand{\cG}{{\mathcal G}}
\newcommand{\cH}{{\mathcal H}}
\newcommand{\cI}{{\mathcal I}}
\newcommand{\cJ}{{\mathcal J}}
\newcommand{\cK}{{\mathcal K}}
\newcommand{\cL}{{\mathcal L}}
\newcommand{\cM}{{\mathcal M}}
\newcommand{\cN}{{\mathcal N}}
\newcommand{\cO}{{\mathcal O}}
\newcommand{\cP}{{\mathcal P}}
\newcommand{\cQ}{{\mathcal Q}}
\newcommand{\cR}{{\mathcal R}}
\newcommand{\cS}{{\mathcal S}}
\newcommand{\cT}{{\mathcal T}}
\newcommand{\cU}{{\mathcal U}}
\newcommand{\cX}{{\mathcal X}}
\newcommand{\cY}{{\mathcal Y}}
\newcommand{\cZ}{{\mathcal Z}}
\newcommand{\fA}{{\mathfrak{A}}}
\newcommand{\fg}{\mathfrak{g}}
\newcommand{\fh}{{\mathfrak{h}}}
\newcommand{\fm}{{\mathfrak{m}}}
\newcommand{\fR}{{\text{\gothfamily R}}}
\newcommand{\fZ}{{\text{\gothfamily Z}}}
\newcommand{\ccC}{{\mathscr{C}}}
\newcommand{\td}{{\mathtt{d}}}
\newcommand{\ti}{{\mathtt{i}}}
\newcommand{\tk}{{\mathtt{d}}}
\newcommand{\tm}{{\mathtt{m}}}
\newcommand{\tC}{{\mathtt{C}}}
\newcommand{\tF}{{\mathtt{F}}}
\newcommand{\tG}{{\mathtt{G}}}
\newcommand{\tK}{{\mathtt{K}}}
\newcommand{\tL}{{\mathtt{L}}}
\newcommand{\tM}{{\mathtt{M}}}
\newcommand{\ba}{{\bf a}}
\newcommand{\be}{{\bf e}}
\newcommand{\bi}{{\bf i}}
\newcommand{\bj}{{\bf j}}
\newcommand{\bk}{{\bf k}}
\newcommand{\bm}{{\bf m}}
\newcommand{\bn}{{\bf n}}
\newcommand{\al}{{\alpha}}
\newcommand{\bt}{{\beta}}
\newcommand{\La}{{\Lambda}}
\newcommand{\kk}{{\kappa}}
\newcommand{\g}{\gamma}
\newcommand{\derx}{\partial_x}
\newcommand{\norm}[1]{\| #1 \|}
\newcommand{\abs}[1]{\left| #1 \right|}
\newcommand{\0}{{(0)}}
\newcommand{\2}{{(2)}}
\newcommand{\1}{{(1)}}
\newcommand{\la}{\left\langle}
\newcommand{\ra}{\right\rangle}
\newcommand{\im}{{\rm i}}
\newcommand{\re}[1] {\mbox{\Fontlukas T}\!_{#1}}  
\newcommand{\und}[1]{\underline{#1}}
\newcommand{\di}{{\rm d}}
\newcommand{\e}{{\varepsilon}}
\newcommand{\eps}{{\varepsilon}}
\newcommand{\de}{\delta}
\newcommand{\hh}{ h }
\newcommand{\pix}{{\pi_x}}
\newcommand{\piy}{{\pi_y}}
\newcommand{\gen}{\fg}
\newcommand{\uno}{{\mathbb I}}
\newcommand{\meas}{{\rm meas}}
\newcommand{\red}[1]{{\textcolor{RubineRed}{#1}}}
\newcommand{\bnorm}[1]{{|\mkern-6mu |\mkern-6mu | \,  #1 \,  |\mkern-6mu 
		|\mkern-6mu |}  }
\newcommand{\Tan}{{\cS_0}}
\newcommand{\sS}{{\mathscr{S}}}
\newcommand{\sC}{{\mathscr{C}}}
\title[Nonlinear instability and growth of Sobolev norms near finite-gap tori]{Strong nonlinear instability and growth of Sobolev norms near quasiperiodic finite-gap tori for the 2D cubic NLS equation}
\author{
	M. Guardia}
\thanks{ Departament de Matem\`atica Aplicada I, 
	Universitat Polit\`ecnica de Catalunya, Diagonal 647, 08028
	Barcelona, Spain,  {\tt marcel.guardia@upc.edu}}
\author{Z. Hani}
\thanks{Department of Mathematics, University of Michigan, 530 Church Street,
	Ann Arbor, MI 48109-1043, USA, {\tt zhani@umich.edu}}  
\author{E. Haus}
\thanks{ Dipartimento di
	Matematica e Applicazioni ``R. Caccioppoli'', Universit\`a degli Studi di Napoli
	``Federico II'', Via Cintia, Monte S. Angelo, 80126 Napoli, Italy, {\tt emanuele.haus@unina.it}} 
\author{A. Maspero}
\thanks{ International School for Advanced Studies (SISSA), Via Bonomea 265, 34136, Trieste, Italy
	{\tt alberto.maspero@sissa.it}}
\author{ M. Procesi}
\thanks{Dipartimento di Matematica e Fisica,
	Universit\`a di Roma Tre, Largo S.L. Murialdo, 00100 Roma, Italy,
	{\tt mprocesi@mat.uniroma3.it}}
\begin{document}
\maketitle

\begin{abstract}
We consider the defocusing cubic nonlinear Schr\"odinger equation (NLS) on the 
two-dimensional torus.  The equation admits a special family of elliptic invariant 
quasiperiodic tori called finite-gap solutions. These are inherited from the 
integrable 1D model (cubic NLS on the circle) by considering solutions that 
depend only on one variable. We study the long-time stability of such invariant 
tori for the 2D NLS model and show that, under certain assumptions and over 
sufficiently long timescales, they exhibit a strong form of \emph{transverse 
instability} in Sobolev spaces $H^s(\T^2)$ ($0<s<1$). More precisely, we construct 
solutions of the 2D cubic NLS that start arbitrarily close to such invariant 
tori in the $H^s$ topology and whose $H^s$ norm can grow by any given factor. 
This work is partly motivated by the problem of infinite energy cascade for 2D NLS, 
and seems to be the first instance where (unstable) long-time nonlinear dynamics 
near (linearly stable) quasiperiodic tori is studied and constructed. 

%
\end{abstract}

\section{Introduction}

A widely held principle in dynamical systems theory is that invariant quasiperiodic tori play an important role in understanding the complicated long-time behavior of Hamiltonian ODE and PDE. In addition to being important in their own right, the hope is that such quasiperiodic tori can play an important role in understanding other, possibly more generic, dynamics of the system by acting as \emph{islands} in whose vicinity orbits might spend long periods of time before moving to other such islands. The construction of such invariant sets for Hamiltonian PDE has witnessed an explosion of activity over the past thirty years after the success of extending KAM techniques to infinite dimensions.  However, the dynamics near such tori is still poorly understood, and often restricted to the linear theory. The purpose of this work is to take a step in the direction of understanding and constructing non-trivial \emph{nonlinear} dynamics in the vicinity of certain quasiperiodic solutions for the cubic defocusing NLS equation. In line with the above philosophy emphasizing the role of invariant quasiperiodic tori for other types of behavior, another aim is to push forward a program aimed at proving infinite Sobolev norm growth for the 2D cubic NLS equation, an outstanding open problem.

\medskip

\noindent{\bf 1.1. The dynamical system and its quasiperiodic objects.} We start by describing the dynamical system and its quasiperiodic invariant objects at the center of our analysis. Consider the periodic cubic defocusing nonlinear Schr\"odinger
equation (NLS),
\begin{equation}\label{NLS}\tag{2D-NLS}
\im \partial_t u+\Delta u=|u|^2 u
\end{equation}
where $(x,y)\in\T^2=\R^2/(2\pi\Z)^2$, $t\in\R$ and
$u:\R\times\T^2\rightarrow\C$. All the results in this paper extend trivially to higher dimensions $d\geq 3$ by considering solutions that only depend on two variables\footnote{We expect that the results also extend to the focusing sign of the nonlinearity ($-|u|^2u$ on the R.~H.~S.~ of \eqref{NLS}). The reason why we restrict to the defocusing sign comes from the fact that the linear analysis around our quasiperiodic tori has only been established in full detail in \cite{Maspero-Procesi} in this case.}. This is a Hamiltonian PDE with conserved quantities: i) the Hamiltonian
\begin{equation}\label{def:Ham:Original}
H_0(u)=\int_{\T^2}\left(\left|\nabla
u(x,y)\right|^2+\frac{1}{2}|u(x,y)|^4\right)
 \di x \, \di y ,
\end{equation}
ii) the mass
\begin{equation}\label{def:NLS:mass}
 M(u)=\int_{\T^2}|u(x,y)|^2 \di x\, \di y,
\end{equation}
which is just the square of the $L^2$ norm of the solution, and iii) the momentum 
\begin{equation}\label{def:NLS:momentum}
P(u)=\im\int_{\T^2} \overline{ u(x,y)} \nabla u(x,y) \, \di x \, \di y.
\end{equation}

%

Now, we describe the invariant objects around which we will study and construct our long-time nonlinear dynamics. Of course, such a task requires a very precise understanding of the linearized dynamics around such objects. For this reason, we take the simplest non-trivial family of invariant quasiperiodic tori admitted by \eqref{NLS}, namely those inherited from its completely integrable 1D counterpart
\begin{equation}\label{def:NLS1D}\tag{1D-NLS}
 \im \partial_t q=-\partial_{xx} q+|q|^2q,\quad x\in \T.
\end{equation}
This is a subsystem of \eqref{NLS} if we consider solutions that depend only on the first spatial 
variable. It is well known that equation \eqref{def:NLS1D} is integrable and its 
phase space is foliated by tori of finite or infinite dimension with periodic, quasiperiodic, or almost periodic dynamics. The quasiperiodic orbits are 
usually called \emph{finite-gap solutions}. 

 Such tori are Lyapunov stable (for all time!) as solutions of \eqref{def:NLS1D} 
(as will be clear once we exhibit its integrable structure) and some of them are 
linearly stable as solutions of \eqref{NLS}, but we will be interested in their 
\emph{long-time nonlinear stability} (or lack of it) as invariant objects for 
the 2D equation \eqref{NLS}. In fact, we shall show that they are 
\emph{nonlinearly unstable} as solutions of \eqref{NLS}, and in a strong sense, 
in certain topologies and after very long times. Such instability is transversal 
in the sense that one drifts along the purely 2-dimensional directions: 
solutions which are initially very close to 1-dimensional become strongly 
2-dimensional after some long time scales\footnote{ 
The tranversal instability phenomenon was already studied for  solitary waves of the water waves equation \cite{RoussetT11} and the KP-I equation \cite{RoussetT12} by Rousset and Tzvetkov. However, their instability is a \emph{linear effect}, in the sense that the linearized dynamics is unstable. In contrast, our result is a fundamentally nonlinear effect, as the linearized dynamics around some of the finite gap tori is stable.}.

\medskip

\noindent{\bf 1.2. Energy Cascade, Sobolev norm growth, and Lyapunov instability.} In addition to studying long-time dynamics close to invariant objects for NLS, another purpose of this work is to make progress on a fundamental problem in nonlinear wave theory, which is the transfer of energy between characteristically different scales for a nonlinear dispersive PDE. This is called the \emph{energy cascade} phenomenon. It is a purely nonlinear phenomenon (energy is static in frequency space for the linear system), and will be the underlying mechanism behind the long-time instability of the finite gap tori mentioned above.

We shall exhibit solutions whose energy moves from very high frequencies towards low frequencies (\emph{backward or inverse cascade}), as well as ones that exhibit cascade in the opposite direction (\emph{forward or direct cascade}). Such cascade phenomena have attracted a lot of attention in the past few years as they are central aspects of various theories of turbulence for nonlinear systems. For dispersive PDE, this goes by the name of \emph{wave turbulence theory} which predicts the existence of solutions (and statistical states) of \eqref{NLS} that exhibit a cascade of energy between very different length-scales. In the mathematical community, Bourgain drew attention to such questions of energy cascade by first noting that it can be captured in a quantitative way by studying the behavior of the Sobolev norms of the solution
$$
\|u\|_{H^s}=\left(\sum_{n \in \Z^2} (1+|n|)^{2s}|\widehat u_n|^2\right)^{\frac12}.
$$
In his list of Problems on Hamiltonian PDE \cite{Bourgain00b}, Bourgain asked whether there exist solutions that exhibit a quantitative version of the forward energy cascade, namely solutions whose Sobolev norms $H^s$, with $s>1$, are unbounded in time
\begin{equation}\label{infinite growth}
 \sup_{t\geq 0} \|u(t)\|_{H^s}= +\infty, \qquad s>1.
\end{equation}

We should point out here that such growth cannot happen for $s=0$ or $s=1$ due to the conservation laws of the equations. For other Sobolev indices, there exists polynomial upper bounds for the growth of Sobolev norms (cf. \cite{Bourgain96,Staffilani97, 
CollianderDKS01,Bourgain04,Zhong08,
CatoireW10,Sohinger10a,Sohinger10b, Sohinger11,CollianderKO12,PTV17}). Nevertheless, 
results proving actual growth of Sobolev norms are much more scarce. After 
seminal works by Bourgain himself \cite{Bourgain96} and  Kuksin 
\cite{Kuksin96, Kuksin97, Kuksin97b}, the landmark result in \cite{CKSTT} played a fundamental importance in the recent progress, including this 
work: It showed that for any $s>1$, $\de\ll1$, $K\gg 1$,  
there 
exist solutions $u$ of \eqref{NLS} such that
\begin{equation}\label{eq:IteamGrowth}
 \|u(0)\|_{H^s}\leq \de \quad\text{ and }\quad  \|u(T)\|_{H^s}\geq K
\end{equation}
for some $T>0$.  Even if not mentioned in that paper, the same techniques also 
lead to the same result  for $s\in (0,1)$. This paper induced a lot of 
activity in the area \cite{GuardiaK12,Hani12,Guardia14,HaniPTV15, 
HausProcesi, GuardiaHP16} (see also 
\cite{GerardG10, Delort10,Pocovnicu11,GerardG11,Pocovnicu12,GerardG15,Maspero18g} on results about 
growth of Sobolev norms with different techniques). Despite all that, Bourgain's question about solutions exhibiting \eqref{infinite growth} remains open on $\T^d$ (however a positive answer holds for the cylindrical domain $\R\times \T^d$, \cite{HaniPTV15}).

\medskip

The above-cited works revealed an intimate connection between Lypunov instability and Sobolev norm growth. Indeed, 
the solution $u=0$ of \eqref{NLS} is an elliptic critical point and is 
linearly stable in all $H^s$. 
From this point of view, the result in \cite{CKSTT} given in 
\eqref{eq:IteamGrowth} can be 
interpreted as the Lyapunov instability in $H^s$, $s\neq 1$, of the elliptic 
critical 
point $u=0$ (the first integrals \eqref{def:Ham:Original} and  
\eqref{def:NLS:mass} imply Lyapunov stability in the $H^1$ and $L^2$ topology). It turns out that this connection runs further, particularly in relation to the question of finding solutions exhibiting \eqref{infinite growth}. As was observed in \cite{Hani12}, one way to prove the existence of such solutions is to prove that, for sufficiently many $\phi\in H^s$, an instability similar to that in \eqref{eq:IteamGrowth} holds, but with $\|u(0)-\phi\|_{H^s} \leq \delta$. In other words, proving long-time instability as in \eqref{eq:IteamGrowth} but with solutions starting $\delta-$close to $\phi$, and for sufficiently many $\phi\in H^s$ implies the existence (and possible genericness) of unbounded orbits satisfying \eqref{infinite growth}. Such a program (based on a Baire-Category argument) was applied successfully for the Szeg\"o equation on $\T$ in \cite{GerardG15}.

Motivated by this, one is naturally led to studying the Lyapunov instability of 
more general invariant objects  of \eqref{NLS} (or other Hamiltonian PDEs), or
equivalently to investigate whether one can achieve Sobolev norm explosion starting 
arbitrarily close to a given invariant object. The first work in this direction 
is by one of the authors \cite{Hani12}. He considers the plane waves $u(t,x)=A 
e^{\im(mx-\omega t)}$ with 
$\omega=m^2+A^2$, periodic orbits of \eqref{NLS}, and proves that there are 
orbits which start $\de$-close to them and undergo $H^s$ Sobolev norm
explosion, $0<s<1$. This implies that the plane 
waves are Lyapunov unstable in these topologies. Stability results for plane 
waves in $H^s$, $s>1$, on shorter time scales are provided in \cite{FaouGL14}. 

The next step in this program would be to study such instability phenomena near higher dimensional invariant objects, namely quasiperiodic orbits. This is the purpose of this work, in which we will address this question for the family of finite-gap tori of \eqref{def:NLS1D} as solutions to the \eqref{NLS}. To control the linearized dynamics around such tori, we will impose some Diophantine (strongly non-resonant) conditions on the quasiperiodic frequency parameters. This allows us to obtain a stable linearized operator (at least with respect to the perturbations that we consider), which is crucial to control the delicate construction of the unstable nonlinear dynamics.

\medskip

{\bf 1.3. Statement of results.} Roughly speaking, we will construct solutions to \eqref{NLS} that start very close to the finite-gap tori in appropriate topologies, and exhibit either backward cascade of energy from high to low frequencies, or forward cascade of energy from low to high frequencies. In the former case, the solutions that exhibit backward cascade start in an arbitrarily small vicinity of a finite-gap torus in Sobolev spaces $H^s(\T^2)$ with $0< s<1$, but grow to become larger than any pre-assigned factor $K\gg1$ in the same $H^s$ (higher Sobolev norms $H^s$ with $s>1$ decrease, but they are large for all times). In the latter case, the solutions that exhibit forward cascade start in an arbitrarily small vicinity of a finite-gap torus in $L^2(\T^2)$, but their $H^s$ Sobolev norm (for $s>1$) exhibits a growth by a large multiplicative factor $K\gg 1$ after a large time. We shall comment further on those results after we state the theorems precisely.  

To do that, we need to introduce the 
Birkhoff coordinates for equation \ref{def:NLS1D}. Gr\'ebert and Kappeler showed in 
\cite{grebert_kappeler} that there exists a globally defined  map, called the 
Birkhoff map, such that $\forall s \geq 0$
\begin{equation}\label{def:BirkhoffMap}
 \begin{split}
  \Phi :\,& H^s(\T) \longrightarrow\,\,\, h^s(\Z)\times h^s(\Z)\\
 &\quad q \quad\,\longmapsto\ (z_m,\overline z_m)_{m\in\Z},
 \end{split}
\end{equation}
such that equation \eqref{def:NLS1D} is transformed in the new coordinates $(z_m,\overline z_m)_{m\in\Z}=\Phi (q)$ to:
\begin{equation}\label{def:NLSinBirkhoff}
 \im \dot z_m=\alpha_m(I)z_m
\end{equation}
where $I=(I_m)_{m \in \Z}$ and $I_m=|z_m|^2$ are the actions, which are conserved in time (since $\alpha_m(I)\in \R$). Therefore in these 
coordinates, called 
Birkhoff coordinates, equation \eqref{def:NLS1D} becomes a chain of 
nonlinear harmonic oscillators and it is clear that  the phase space is foliated 
by finite and infinite dimensional tori 
with periodic, quasiperiodic or almost periodic dynamics, depending on how many of the actions $I_m$ (which are constant!) are nonzero and on the properties of rational dependence of the frequencies.

In this paper we are interested in the finite dimensional tori with 
quasiperiodic dynamics. Fix $\tk\in \N$ and  consider a set of modes
\begin{equation}\label{def:SetOfModes}
\cS_0=\{\tm_1, \ldots,\tm_\tk\}\subset \Z\times \{0\}.
\end{equation}
Fix also a value for the actions $I_{\tm_i}=I_{\tm_i}^0$ for $i=1,\ldots \tk$. 
Then we can define the $\tk$-dimensional torus
\begin{equation}\label{def:torus}
 \tT^\tk=\tT^\tk(\cS_0,I^0_m)=\left\{z\in\ell^2: |z_{\tm_i}|^2=I_{\tm_i}^0, 
\,\,\text{ for }\,i=1,\ldots, \tk, \ \ \ z_m=0\,\,\text{ 
for }\, m\not\in \cS_0\right\},
\end{equation}
which is supported on the set $\cS_0$. Any orbit on this torus is quasiperiodic 
(or periodic if the frequencies of the rigid rotation are completely resonant). 
We will impose conditions to have non-resonant quasiperiodic dynamics. This will imply 
that the 
orbits on $\tT^\tk$ are dense. By equation \eqref{def:NLSinBirkhoff}, it is clear 
that this torus, as an invariant object of equation \ref{def:NLS1D}, is stable for this equation for all times 
in the sense of Lyapunov.

The torus \eqref{def:torus} {(actually, its pre-image $\Phi^{-1}(\tT^\td)$ though the Birkhoff map)} is also an invariant object for 
the original equation \eqref{NLS}. The main result of this paper will show the 
instability (in the sense of Lyapunov) of this invariant object. 
Roughly speaking, we show that under certain assumptions (on the choices of modes \eqref{def:SetOfModes} and actions 
\eqref{def:torus}) these tori are unstable in the $H^s(\T^2)$
topology for $s\in (0,1)$. Even more, there exist orbits which 
start 
arbitrarily close to these tori and undergo an arbitrarily large $H^s$-norm 
explosion.

We will abuse notation, and identify $H^s(\T)$  with the closed subspace of $H^s(\T^2)$  of functions 
depending only on the $x$ variable. Consequently, $\mathcal T^\tk:=\Phi^{-1}(\tT^\tk)$ (see 
\eqref{def:BirkhoffMap}) is a closed torus of $H^s(\T)\subset H^s(\T^2)$.

%

\begin{theorem}\label{thm:main}
Fix a positive integer $\td$. For any choice of $\td$ modes $\cS_0$ (see 
\eqref{def:SetOfModes}) satisfying a genericity condition (namely 
Definition \ref{Lgenericity} with sufficiently large $\tL$), there exists $\eps_\ast>0$ 
 such that for any $\eps\in 
(0,\eps_\ast)$ there exists a positive measure Cantor-like set $\cI\subset 
(\eps/2,\eps)^\tk$ of actions, for which the following holds true for any torus  $\tT^\tk=\tT^\tk(\cS_0,I^0_m)$ with  $I_m^0\in \cI$:

\begin{enumerate}

\item For any $s\in (0,1)$, $\de>0$ small enough, and $K>0$ large enough, there 
exists an 
orbit $u(t)$ of  \eqref{NLS} and a time 
\[
0<T\leq e^{\left(\frac{K}{\de}\right)^\beta}
\]
such that $u(0)$ is $\de$-close 
to the torus $\mathcal T^\tk:=\Phi^{-1}(\tT^\tk)$ in $H^s(\T^2)$ and $\|u(T)\|_{H^s}\geq K$. Here $\beta>1$ is independent of $K, \delta$.

\item For any $s>1$, and any $K>0$ large enough, there exists an
orbit $u(t)$ of  \eqref{NLS} and a time 
\[
0<T\leq e^{K^\sigma}
\]
such that  
$$
\mathrm{dist}\left(u(0), \mathcal T^\tk \right)_{L^2(\T^2)}\leq K^{-\sigma'}\quad \text{ and }\quad\|u(T)\|_{H^s(\T^2)}\geq K\|u(0)\|_{H^s(\T^2)}.
$$
Here $\sigma, \sigma'>0$ are independent of $K$.
 \end{enumerate}
\end{theorem}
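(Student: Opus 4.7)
The strategy I would pursue adapts the I-team cascade mechanism of \cite{CKSTT} and the plane-wave instability construction of \cite{Hani12} to the higher-dimensional invariant object $\mathcal{T}^\tk$. As a first step, apply the Gr\'ebert--Kappeler Birkhoff map $\Phi$ from \eqref{def:BirkhoffMap} in the $x$-variable alone to diagonalize the 1D dynamics, while treating the genuinely two-dimensional Fourier modes $w_n$ (those with $n=(n_1,n_2)\in\Z^2$, $n_2\neq 0$) as ordinary Fourier coefficients. In these mixed variables the Hamiltonian splits as $H = H_{\mathrm{1D}}(I) + H_\perp(I,\theta,w,\bar w) + H_{\mathrm{int}}$, where $H_{\mathrm{1D}}$ is a function of the 1D actions $I=(I_m)$ alone (so the dynamics on $\mathcal{T}^\tk$ is an exact quasiperiodic rotation with frequency $\omega(I^0)=\partial_I H_{\mathrm{1D}}(I^0)$), $H_\perp$ is quadratic in $w$ with coefficients depending quasiperiodically on the torus angles $\theta$, and $H_{\mathrm{int}}$ collects the cubic and higher-order couplings.

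The crucial analytic input is the reducibility of the linearization of $H_\perp$ around $\mathcal{T}^\tk$, which is precisely the content of \cite{Maspero-Procesi}: under the genericity condition on $\cS_0$ and suitable Diophantine/Melnikov conditions on the actions $I^0$, one can construct a quasiperiodic change of variables conjugating the linear $w$-dynamics to a block-diagonal, constant-coefficient operator with renormalized frequencies $\Omega_n(I^0)$ close to $|n|^2$. The Cantor set $\cI\subset(\eps/2,\eps)^\tk$ of admissible actions is cut out exactly by the second-order Melnikov conditions $|k\cdot\omega(I^0)+\Omega_n(I^0)\pm\Omega_{n'}(I^0)|\gtrsim \gamma\langle k\rangle^{-\tau}$ needed for this reduction.

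With the quadratic transversal dynamics reduced, I would perform a high-order Birkhoff normal form near $w=0$, $I=I^0$, killing the non-resonant monomials and isolating the effective resonant polynomial interactions among the $w_n$ that can actually drive energy transfer. Following the toy-model philosophy of \cite{CKSTT,GuardiaK12,GuardiaHP16,HausProcesi}, I would then select a combinatorial set $\Lambda=\Lambda_1\sqcup\cdots\sqcup\Lambda_N\subset\Z^2$ of transversal modes organized into ``generations'', designed so that the dominant resonant interactions restricted to $\Lambda$ close into an explicit finite-dimensional Hamiltonian toy model admitting a heteroclinic ``slider'' orbit that transports excitation from $\Lambda_1$ to $\Lambda_N$ in finite rescaled time. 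Placing the generations at small versus large $|n|$ produces the backward cascade required for part (1) or the forward cascade required for part (2); the rescaling cost, when measured in the original variables, yields the exponential bounds $T\leq e^{(K/\delta)^\beta}$ and $T\leq e^{K^\sigma}$ of the statement.

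The final step is a shadowing argument showing that a genuine \eqref{NLS} trajectory tracks the toy-model orbit long enough to complete the cascade, with the neglected non-resonant remainder controlled in a suitably weighted $\ell^1$-type norm. The principal obstacle, I expect, will be the small-divisor analysis: one must simultaneously verify that the Diophantine and second-order Melnikov conditions underlying both the reducibility and the normal form survive along the entire toy-model trajectory, along which the effective actions drift away from $I^0$; and one must choose $\Lambda$ so that its resonant graph avoids every low-order resonance with the torus frequencies $\omega(I^0)$ on a positive-measure subset of actions. Both are handled by removing a small-measure union of resonant regions in action space, which is exactly what gives $\cI$ its Cantor-like structure. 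A secondary, more bookkeeping-type difficulty is that the Birkhoff map $\Phi$ of \eqref{def:BirkhoffMap} acts only on functions of $x$, so one must verify that the mixed $(z,w)$ coordinate change is bi-Lipschitz between the relevant $H^s(\T^2)$ and sequence-space norms in order to transfer both the initial $H^s$-closeness to $\mathcal{T}^\tk$ and the final $H^s$-norm explosion between the original and normalized variables.
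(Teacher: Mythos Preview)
Your overall strategy matches the paper's approach: Birkhoff map on the 1D component, reducibility of the transversal linearization via \cite{Maspero-Procesi}, Birkhoff normal form to isolate resonant quartic interactions, a CKSTT-type toy model on a combinatorial set $\Lambda$, and a shadowing/approximation argument in $\ell^1$. However, you have misidentified the principal obstacle.

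The difficulty is \emph{not} that Melnikov conditions must ``survive along the trajectory as the effective actions drift'' --- the Cantor set $\cI$ is cut once and for all by second-, third-, and fourth-order Melnikov conditions at the fixed torus, and the normal form is performed there; the action drift is absorbed into the approximation argument. The real issue, which is what forces the restriction $s<1$ in part (1) and which your outline does not address, is that the fourth-order normal form \emph{cannot} be completed. After reducibility the transversal frequencies have the sharp asymptotics $\Omega_{\jj}=|\jj|^2+O(|\jj|^{-2})$, so quartic monomials supported on rectangles ($\jj_1-\jj_2+\jj_3-\jj_4=0$, $|\jj_1|^2-|\jj_2|^2+|\jj_3|^2-|\jj_4|^2=0$) carry a residual phase $e^{\im\Gamma t}$ with $\Gamma=O(J^{-2})$ when all modes live at scale $|\jj|\sim J$. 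These terms are \emph{quasi-resonant}, not exactly resonant, and cannot be removed without ruinous small divisors. To treat $e^{\im\Gamma t}\approx 1$ over the cascade time $T\sim\nu^{-2}$ (with $\nu$ the $\ell^1$ size of the transversal data) one needs $J^{-2}T\ll 1$; combined with the initial $H^s$-closeness constraint $\nu J^s\lesssim\de$, this forces $s<1$. For part (2) only $L^2$-closeness is imposed, which decouples $\nu$ from $J^s$ and lifts the restriction on $s$. Your sketch of a ``high-order Birkhoff normal form'' glosses over exactly this point.

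Two smaller corrections. First, the paper places $\Lambda$ inside an invariant sublattice $\Z\times N\Z$ with $N$ a large power of $J$; this kills the interactions between $\Lambda$ and the finite-gap sites (and the hyperbolic directions of \cite{Maspero-Procesi}) structurally rather than by further measure excision, and you should build it into your construction. Second, the Birkhoff-map regularity issue you flag is genuine but routine once one has the quantitative $\ell^1$-boundedness of $\Phi$ from \cite{AlbertoVeyPaper}; it is not where the argument is delicate.
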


{\bf 1.4. Comments and remarks on Theorem \ref{thm:main}:}

\begin{enumerate}
\item 
The relative measure of the set $\cI$ of admissible actions can be taken as close to 1 as desired. Indeed, by taking smaller $\eps_\ast$, one has that 
the relative measure satisfies
\[
 |1-\mathrm{Meas}(\cI)|\leq C\eps_\ast^\kappa
\]
for some constant $C>0$ and $0<\kappa<1$ independent of $\eps_\ast>0$. The genericity condition on the set $\cS_0$ and the actions $(I_{\tm})_{\tm \in \cS_0}\in \cI$ ensure that the \emph{linearized} dynamics around the resulting torus $\mathcal T^\tk$ is stable for the perturbations we need to induce the nonlinear instability. In fact, a subset of those tori is even linearly stable for much more general perturbations as we remark below. 



\item \textit{Why does the finite gap solution need to be small?} To prove 
Theorem 
\ref{thm:main} we need to analyze the linearization of equation \eqref{NLS} at the finite gap solution (see Section \ref{sec:reducibility}). 
Roughly speaking, this leads to a Schr\"odinger equation with a quasi-periodic potential. Luckily, such operators can be \emph{reduced} to constant coefficients via a KAM scheme. This is known as \emph{reducibility theory} which allows one to construct a change of variables that casts the linearized operator into an essentially constant coefficient diagonal one. This KAM scheme was carried out in \cite{Maspero-Procesi}, and requires  the quasi-periodic potential, given by the finite gap solution here, to be small for the KAM iteration to converge. That being said, we suspect a similar result to be true for non-small finite gap solutions.


\item To put the complexity of this result in perspective, it is instructive to compare it with the stability result in \cite{Maspero-Procesi}. In that paper, it is shown that 
 a proper {\sl subset} $\cI' \subset \cI$ of the tori considered in Theorem \ref{thm:main} are Lyapunov stable in $H^s$, $s>1$, but for shorter time scales than those considered in this theorem. More precisely, all orbits that are initially $\de$-close to $\mathcal T^\tk$ in $H^s$ stay $C\de$-close for some 
fixed $C>0$  for time scales $t\sim \delta^{-2}$.  The same stability result (with a completely identical proof) holds if we replace $H^s$ by $\mathcal F\ell_1$ norm (functions whose Fourier series is in $\ell^1$).  In fact, by trivially modifying the proof, one could also prove stability on the $\delta^{-2}$ timescale in $\mathcal F\ell_1\cap H^s$ for $0<s<1$. What this means is that the solutions in the first part of Theorem \ref{thm:main} remains within $C\delta$ of $\tT^\tk$ up to times $\sim \delta^{-2}$ but can diverge vigorously afterwards at much longer time scales. 

It is also worth mentioning that the complementary subset $\cI \setminus \cI'$ has a positive measure subset where tori are linearly unstable since they possess a finite set of 
modes that exhibit hyperbolic behavior. In principle, hyperbolic directions are 
good for instability, but they are not useful for our purposes since they live 
at very low frequencies, and hence cannot be used (at least not by themselves 
alone) to produce a substantial growth of Sobolev norms. We avoid dealing with 
these linearly unstable directions by restricting our solution to an 
invariant subspace on which these modes are at rest.

\item 
 It is expected that a similar statement to the first part of Theorem \ref{thm:main} is also true for 
$s>1$. This would be a stronger instability compared to that in the second part (for which the initial perturbation is small in $L^2$ but not in $H^s$). Nevertheless, this case cannot be tackled with the techniques considered 
in this paper. Indeed, one of the key points in the proof is to perform a
(partial) Birkhoff normal form up to order 4 around the finite gap solution. 
The terms which lead to the instabilities in Theorem \ref{thm:main} are quasi-resonant instead of being completely resonant. Working in the $H^s$ topology with $s\in (0,1)$, such terms can be considered completely resonant with little error on the timescales where instability happens. However, this cannot be done for $s>1$, for which one might be able to eliminate those terms by a higher order normal form ($s>1$ gives a stronger 
topology and can thus handle worse small divisors). This would mean that one needs 
other resonant terms to achieve growth of Sobolev norms. The same difficulties 
were encountered in \cite{Hani12} to prove the instability of the plane waves 
of \eqref{NLS}.


\item For finite dimensional Hamiltonian dynamical systems, proving 
Lyapunov instability for quasi-periodic Diophantine elliptic (or maximal 
dimensional Lagrangian) tori is an extremely difficult task. Actually all the 
obtained results \cite{ChengZ13,GuardiaK14} deal with $C^r$ or $C^\infty$ 
Hamiltonians, and not a single example of such instability is known for analytic 
Hamiltonian systems. In fact, there are no results of instabilities in 
the vicinity of non-resonant elliptic critical points or periodic orbits for 
analytic Hamiltonian systems (see \cite{LeCalvezDou83,Douady88, KaloshinMV04} 
for results on the $C^\infty$ topology). The present paper proves the existence 
of unstable Diophantine elliptic tori in an analytic infinite dimensional Hamiltonian system. 
Obtaining such instabilities in infinite dimensions is, in some sense, easier: 
having infinite dimensions gives ``more room'' for instabilities.

\item It is well known that many Hamiltonian PDEs possess quasiperiodic invariant 
tori 
\cite{Wayne90,Poschel96a,kuksin_poschel,Bourgain98,BB1,Eliasson10,GYX,BBi10,
Wang2,PX,
BCP,PP, PP13,BBHM18}. Most of these tori are normally elliptic and thus linearly 
stable. It is widely expected that the behavior given by Theorem \ref{thm:main} 
also arises in the neighborhoods of (many of) those tori. Nevertheless, it is 
not clear how to apply the techniques of the present paper to these settings.

\end{enumerate}
\medskip

\medskip

{\bf 1.5. Scheme of the proof.}
Let us explain the main steps to prove Theorem \ref{thm:main}. 
\begin{enumerate}
 \item Analysis of the 1-dimensional cubic Schr\"odinger equation. We express 
the 1-dimensional cubic NLS in terms of the Birkhoff coordinates. We need a 
quite precise knowledge of the Birkhoff  map (see Theorem \ref{thm:dnls}). In 
particular, we  need that it ``behaves well'' in $\ell^1$.
This is done in the paper 
\cite{AlbertoVeyPaper} and summarized in  Section \ref{sec:AdaptedVarAndBirk}. In Birkhoff 
coordinates, the finite gap solutions are supported in a finite set of 
variables. We use such  coordinates to express the Hamiltonian 
\eqref{def:Ham:Original} in a more convenient way.


\item Reducibility of the 2-dimensional cubic NLS around a finite gap solution. We reduce the linearization of the vector field 
around the finite gap solutions to a constant coefficients diagonal vector 
field. This is done in \cite{Maspero-Procesi} and explained in Section 
\ref{sec:reducibility}. In Theorem \ref{thm:reducibility} 
we give the conditions to achieve full reducibility. In effect, this transforms the linearized operator around the finite gap into a constant coefficient diagonal (in Fourier space) operator, with eigenvalues $\{\Omega_{\jj}\}_{\jj\in \Z^2\setminus \cS_0}$. We give the asymptotics of these eigenvalues in Theorem 
\ref{thm:reducibility4}, which roughly speaking look like 
\begin{equation}\label{geraffe}
\Omega_{\jj}=|\jj|^2 +O(J^{-2})
\end{equation}
for frequencies $\jj=(m,n)$ satisfying $|m|, |n|\sim J$. This seemingly 
harmless $O(J^{-2})$ correction to the unperturbed Laplacian eigenvalues is sharp and will be
responsible for the restriction to $s\in (0,1)$ in the first part of Theorem 
\ref{thm:main} as we shall explain below. 

\item Degree three Birkhoff normal form around the finite gap solution. This is 
done in  \cite{Maspero-Procesi}, but we shall need more precise information from this normal form that will be crucial for Steps 5 and 6 below. This is done in \ref{sec:CubicBirkhoff} (see Theorem \ref{thm:3b}). 
\item Partial normal form of degree four. We remove all degree four monomials 
which are not (too close to) resonant. This is done in Section 
\ref{sec:QuarticBirkhoff}, and leaves us with a Hamiltonian with (close to) resonant degree-four terms plus a higher-degree part which will be treated as a remainder in our construction. 

\item We follow the paradigm set forth in \cite{CKSTT, GuardiaK12} to construct  
solutions to the truncated Hamiltonian consisting of the (close to) resonant 
degree-four terms isolated above, and then afterwards to the full Hamiltonian by 
an approximation argument. This construction will be done at frequencies 
$\jj=(m, n)$ such that $|m|,|n| \sim J$ with $J$ very large, and for 
which the dynamics is effectively given by the following system of ODE
$$
\begin{cases}
\im \dot a_{\jj} &= -|a_{ \jj}|^2a_{ \jj}+\sum_{\mathcal R(\jj)} a_{\jj_1}\overline{a_{\jj_2}} a_{\jj_3} e^{\im\Gamma t} \\
\mathcal R(\jj)&:=\{(\jj_1, \jj_2, \jj_3) \in \Z^2\setminus \cS_0: \jj_1, \jj_3\neq \jj,  \ \ \ \jj_1-\jj_2+\jj_3=\jj,  \ \ \ |\jj_1|^2-|\jj_2|^2+|\jj_3|^2=|\jj|^2\}\\
\Gamma&:= \Omega_{\jj_1}-\Omega_{\jj_2}+\Omega_{\jj_3}-\Omega_{\jj}.
\end{cases}
$$
We  remark that the conditions of the set $\mathcal R(\jj)$ are essentially 
equivalent to saying that $(\jj_1, \jj_2, \jj_3, \jj)$ form a rectangle in $\mathbb{Z}^2$. Also 
note that by the asymptotics of $\Omega_{\jj}$ mentioned above in \eqref{geraffe}, one obtains that 
$\Gamma=O(J^{-2})$ if all the frequencies involved are in $\mathcal R(\jj)$ and 
satisfy $|m|,|n| \sim J$. The idea now is to reduce this system into a 
finite dimensional system called the ``Toy Model'' which is tractable enough for 
us to construct a solution that cascades energy. An obstruction to this plan is 
presented by the presence of the oscillating factor $e^{\im \Gamma t}$ for which 
$\Gamma$ is not zero (in contrast to \cite{CKSTT}) but rather $O(J^{-2})$. 
The only way to proceed with this reduction is to approximate $e^{\im \Gamma t} 
\sim 1$ which is only possible provided $J^{-2}T \ll 1$. The solution coming 
from the Toy Model is supported on a finite number of modes $\jj\in \Z^2\setminus 
\cS_0$ satisfying $|j|\sim J$, and the time it takes for the energy to diffuse 
across its modes is $T\sim O(\nu^{-2})$ where $\nu$ is the characteristic size 
of the modes in $\ell^1$ norm. Requiring the solution to be initially close in 
$H^s$ to the finite gap would necessitate that $\nu J^s \lesssim \delta$ which 
gives that $T\gtrsim_\delta J^{-2s}$, and hence the condition $J^{-2}T \ll 1$ 
translates into the condition $s<1$. This explains the restriction to $s<1$ in 
the first part of Theorem \ref{thm:main}. If we only require our solutions to be 
close to the finite gap in $L^2$, then no such restriction on $\nu$ is needed, 
and hence there is no restriction on $s$ beyond being $s>0$ and $s\neq 1$, which 
is the second part of the theorem. 


This analysis is done in Section \ref{sec:ToyModel} and \ref{sec:Approximation}. In the former, we perform the reduction to the effective degree 4 Hamiltonian taking into account all the changes of variables performed in the previous sections; while in Section \ref{sec:Approximation} we perform the above approximation argument allowing to shadow the Toy Model solution mentioned above with a solution of \eqref{NLS} exhibiting the needed norm growth, thus completing the proof of Theorem \ref{thm:main}.

\end{enumerate}

\vspace{1em}
\noindent{\bf Acknowledgements:}
This project has received funding from the European Research Council (ERC) under the European Union's Horizon 2020 research and innovation programme (grant agreement No 757802) and under FP7- IDEAS (grant agreement No  306414). M. G. has been also partly supported by the Spanish MINECO-FEDER Grant MTM2015-65715-P. Z.~H.~ was partly supported by a Sloan Fellowship, and NSF grants DMS-1600561 and DMS-1654692.
A.M. was partly supported by  
 Progetto di Ricerca GNAMPA - INdAM 2018 ``Moti stabili ed instabili in equazioni di tipo Schr\"odinger''.

\section{Notation and functional setting}

\subsection{Notation}

For a complex number $z$, it is often convenient to use the notation
$$
z^{\sigma}=\begin{cases} z \qquad \text{if }\sigma=+1,\\
\bar z \qquad \text{if }\sigma=-1.
\end{cases}
$$
For any subset $\Gamma \subset \Z^2$, we denote by $h^s(\Gamma)$ the set of 
sequences $(a_{\jj})_{\jj \in \Gamma}$ with norm
$$
\|a\|_{h^s(\Gamma)}=\left(\sum_{\jj \in \Gamma}\langle \jj \rangle^{2s}|a_{\jj}|^2\right)^{1/2}<\infty.
$$

%

Our phase space will be obtained by an appropriate linearization around the  
finite gap solution with $\td$ frequencies/actions. For a finite set $\mathcal 
S_0 \subset \Z\times \{0\}$ of $\tk$ elements, we consider the phase space 
$\mathcal X= (\C^\tk\times\T^\tk)\times \ell^1(\Z^2 \setminus \cS_0)\times \ell^1(\Z^2 \setminus \cS_0)$. The first 
part $( \C^\tk \times \T^\tk)$ corresponds to the finite-gap sites in action angle 
coordinates, whereas  $\ell^1(\Z^2 \setminus \cS_0)\times \ell^1(\Z^2 \setminus \cS_0)$ corresponds to the 
remaining orthogonal sites in frequency space. We shall often denote the 
$\ell^1$ norm by $\|\cdot \|_1$. We shall denote variables on $\mathcal X$ by 

$$
\mathcal X\ni( \yy,\theta, \ba): \qquad     \yy \in \mathbb C^{\tk}, \  \ \theta \in \mathbb T^{\tk},  \ \  
 \ba=(a, \bar a) \in \ell^1(\Z^2\setminus \cS_0) \times \ell^1(\Z^2\setminus \cS_0).
$$
We shall use multi-index notation to write monomials like $\yy^{l}$ and $\fm_{\al, \bt}=a^\al \bar a^\bt$ where $l \in \mathbb N^{\tk}$ and $\alpha, \beta \in(\mathbb N)^{\Z^2 \setminus \cS_0}$. Often times, we will abuse notation, and simply use the notation $\ba \in \ell^1$ to mean $\ba=(a, \bar a) \in \ell^1(\Z^2\setminus \cS_0) \times \ell^1(\Z^2\setminus \cS_0)$, and $\|\ba\|_{1}=\|a\|_{\ell^1(\Z^2\setminus \cS_0)}$.

\begin{definition}\label{def:degree}
For a monomial of the form $ e^{\im \ell\cdot \theta} \, \yy^l \,  \fm_{\al,\bt}$, we define its degree to be $2|l|+|\alpha|+|\beta|-2$, where the modulus of a multi-index is given by its $\ell^1$ norm. 
\end{definition}

\subsection{Regular Hamiltonians}
Given a Hamiltonian function $F(\yy,\theta,\ba)$ on the phase space $\mathcal X$, we associate to it the Hamiltonian vector field
\[
X_F:=\{ -\partial_\theta F, \partial_\yy F, \ \ - \im \partial_{\bar a} F,  \im \partial_{a}F\},
\]
where we have used the standard complex notation to denote the Fr\'echet derivatives of $F$ with respect to the variable $\ba \in \ell^1$.

We will often need to complexify the variable $\theta \in \T^\tk$ into the domain
$$
\T^\tk_\rho:=\{\theta\in \C^\tk: {\rm Re }(\theta)\in \T^\tk \,,\quad |{\rm Im }(\theta)|\le \rho \} $$
and consider vector fields which are  functions from $$\C^\tk \times \T^\tk_\rho \times  \ell^1 \to  \C^{\tk}\times \C^\tk\times \ell^1\,:\;(\yy,\theta,\ba)\to (X^{(\yy)},X^{(\theta)},X^{(a)},X^{(\bar a )}) 
$$ which are analytic in $\yy,\theta,\ba$. 
Our vector fields will be defined on the domain 
\begin{equation}\label{def:domain}
D(\rho,r):=\T^\tk_\rho \times D(r) \ \ \ \mbox{ where } \ D(r):=\{ |\yy|\le r^2 ,\quad \|\ba\|_1 \le r \}.
\end{equation}
On the vector field, we use as norm
$$
\bnorm{X}_r:=|X^{(\theta)}|+\frac{|X^{(\yy)}|}{r^2}+ \frac{\|X^{(a)}\|_{1}}{r}+ \frac{\|X^{(\bar a)}\|_{1}}{r}.
$$
All Hamiltonians $F$ considered in this article are analytic, real valued and can be expanded 
in Taylor Fourier series which are well defined  and pointwise absolutely 
convergent
\begin{equation}
\label{h.funct}
F(\yy,\theta,\ba)= \sum_{\al ,\bt\in\N^{\Z^2\setminus\cS_0},\ell\in \Z^\tk, l\in \N^\tk } F_{\al,\bt,l,\ell} \  e^{\im \ell\cdot \theta} \, \yy^l \,  \fm_{\al,\bt}.
\end{equation}
Correspondingly we expand vector fields in Taylor Fourier series (again  well defined  and pointwise absolutely convergent)
$$
X^{(v)}(\yy,\theta,\ba)= \sum_{\al ,\bt\in\N^{\Z^2\setminus\cS_0},\ell\in \Z^\tk, l\in \N^\tk } X_{\al,\bt,l,\ell}^{(v)} \, e^{\im \ell\cdot \theta} \, \yy^l \,  \fm_{\al,\bt}\,,
$$
where $v$ denotes the components $\theta_i, \yy_i$ for $1 \leq i \leq \tk$ or 
$a_\jj,\bar a_\jj $ for   $\jj \in \Z^2\setminus\cS_0$.

To a vector field we associate its majorant 
$$
\und{X}_\rho^{(v)}[\yy,\ba]:= \sum_{\ell\in \Z^\tk,l\in \N^\tk,\al ,\bt\in\N^{\Z^2} } |X^{(v)}_{\al,\bt,\ell}| \, e^{\rho\, |\ell|} \, \yy^l \,  \fm_{\al,\bt}
$$
and require that this is an analytic  map on $D(r)$. 
Such a vector field is called \emph{majorant analytic}. Since Hamiltonian functions are defined modulo constants, we give the following definition of the {\em norm} of $F$:
$$
|F|_{\rho,r}:=\sup_{(\yy,\ba)\in D(r)} \bnorm{ \und{(X_F)}_\rho}_{r} \ . 
$$
Note that the norm $| \cdot |_{\rho,r}$ controls the $| \cdot |_{\rho',r'}$ whenever $\rho'< \rho$, $r'<r$. 

Finally, we will also consider Hamiltonians  $F(\lambda; \theta, a, \bar a) \equiv F(\lambda)$ depending on an external parameter $\lambda \in \cO\subset \R^\tk$. For those, we define the  {\em inhomogeneous Lipschitz} norm:
\[
|F|_{\rho,r}^\cO:= \sup_{\lambda \in \cO}|F(\lambda)|_{\rho,r}+  
\sup_{\lambda_1\neq \lambda_2\in \cO} \frac{|F(\lambda_1)-F(\lambda_2) 
|_{\rho,r}}{|\lambda_1-\lambda_2|} \ . 
\]


\subsection{Commutation rules}

Given two Hamiltonians $F$ and $G$, we define their Poisson bracket as {$ \{F, G\} := \di F(X_G)$}; in coordinates
$$
\{F, G\}=-\partial_\yy F \cdot \partial_\theta G+\partial_\theta F \cdot \partial_\yy G +\im \left(\sum_{\jj \in \Z^2\setminus \cS_0} \partial_{\bar a_\jj} F \partial_{a_\jj}G -\partial_{ a_\jj} F \partial_{\bar a_\jj}G\right).
$$
Given $\al,\bt\in \N^{\Z^2\setminus \cS_0}$  we denote $\fm_{\al,\bt}:= a^\al \bar a^\bt$.
To the monomial $e^{\im \ell\cdot \theta}\yy^l \fm_{\al,\bt}$ with $\ell\in \Z^\tk, l \in \mathbb N^{\tk}$ we associate various numbers.
We 
denote by 
\begin{equation}
\label{def.eta}
\eta(\alpha, \beta) := \sum_{\jj \in \Z^2\setminus \cS_0} (\alpha_\jj - \beta_\jj) \ , \quad \qquad \eta(\ell):= \sum_{i=1}^\tk \ell_i \ .
\end{equation} 
We also associate to $e^{\im \ell\cdot \theta}\yy^l\fm_{\al,\bt}$ the quantities  $\pi({\al,\bt})=(\pix,\piy)$ and $\pi(\ell)$ defined by 
\begin{equation}
\label{def.pi}
\pi({\al,\bt})=  \begin{bmatrix} \pi_x(\alpha, \beta) \\ \pi_y(\alpha, \beta) \end{bmatrix} = \sum_{\jj=(m,n)\in \Z^2\setminus \cS_0} 
\begin{bmatrix}  m \\ n \end{bmatrix} (\al_\jj-\bt_\jj) \ , \quad \qquad 
\pi(\ell)= \sum_{i=1}^\tk \tm_i \ell_i  \ . 
\end{equation}

The above quantities are associated with the following mass $\mathcal M$ and momentum $\mathcal P=(\mathcal P_x, \mathcal P_y)$ functionals given by


\begin{equation}
		\label{mp.1}
		\begin{aligned}
		&\mathcal M:=  \sum_{i=1}^\tk \yy_i + \sum_{\jj \in \Z^2\setminus \cS_0 }|a_\jj|^2 \\
		&\mathcal P_x:= \sum_{i=1}^\tk \tm_i  \yy_i + \sum_{(m,n) \in \Z^2 \setminus \cS_0}\!\!\!\!m \, |a_{(m,n)}|^2 \\
		&\mathcal P_y:= \sum_{(m,n) \in \Z^2 \setminus \cS_0} n |a_{(m,n)}|^2
		\end{aligned}
		\end{equation} 
via the following commutation rules: given a monomial $e^{\im 
\ell\cdot\theta}\yy^l\fm_{\al,\bt}$
\begin{align*}
\{\cM,e^{\im \ell\cdot\theta}\yy^l\fm_{\al,\bt}\}&=\im (\eta(\alpha, 
\beta)+\eta(\ell) )e^{\im  \ell\cdot\theta}\yy^l\fm_{\al,\bt}\\
\{\cP_x,e^{\im \ell\cdot\theta}\yy^l\fm_{\al,\bt}\}&=\im (\pi_x(\alpha, 
\beta)+\pi(\ell) )e^{\im \ell\cdot\theta}\yy^l\fm_{\al,\bt} \\ 
\{\cP_y,e^{\im \ell\cdot\theta}\yy^l\fm_{\al,\bt}\}&=\im \, \pi_y(\alpha, 
\beta)\,  e^{\im \ell\cdot\theta}\yy^l\fm_{\al,\bt}
\end{align*} 
\begin{remark}
	\label{leggi_sel}
	An analytic hamiltonian function  $\mathcal F$ (expanded as in \eqref{h.funct}) commutes with the mass $\cM$ and the momentum $\cP$ if and only if the following {\em selection rules} on its coefficients hold:
	\begin{align*}
	&\{ \cF, \cM\} = 0 \ \ \  \Leftrightarrow   \ \ \  \cF_{\alpha, \beta, l, \ell} \, (\eta(\alpha, \beta) + \eta(\ell)) = 0 \\
	& \{ \cF, \cP_x\} = 0  \ \ \ \Leftrightarrow  \ \ \    \cF_{\alpha, \beta, l, \ell} \, (\pi_x(\alpha, \beta) + \pi(\ell)) = 0 \\
	& \{ \cF, \cP_y\} = 0  \ \ \ \Leftrightarrow  \ \ \    \cF_{\alpha, \beta, l, \ell} \, (\pi_y(\alpha, \beta)) = 0 
	\end{align*}
	where $\eta(\alpha, \beta), \eta(\ell)$ are defined in \eqref{def.eta} and $\pi(\alpha, \beta), \pi(\ell)$ are defined in \eqref{def.pi}.
\end{remark}

\begin{definition}
	We will denote by $\cA_{\rho,r}$ the set of all real-valued Hamiltonians of the form \eqref{h.funct} with finite $| \cdot |_{\rho,r}$ norm and which Poisson commute with $\cM$, $\cP$. Given a compact set $\cO\subset \R^\tk$,  we denote by $\cA^\cO_{\rho,r}$ the Banach space of Lipschitz maps $\cO\to \cA_{\rho,r}$ with the norm $|\cdot|_{\rho,r}^\cO$.
\end{definition}

From now on, all our Hamiltonians will belong to some set $\cA_{\rho, r}$ for some $\rho, r>0$.

\section{Adapted variables and Hamiltonian formulation}\label{sec:AdaptedVarAndBirk}

\subsection{Fourier expansion and phase shift}\label{sec:FourierPhase}

Let us start by expanding $u$ in Fourier coefficients 
$$
u(x,y,t)= \sum_{\jj=(m,n)\in \Z^2} u_{\jj}(t) \, e^{\im(m x + n y)}.
$$
Then, the Hamiltonian $H_0$ introduced in \eqref{def:Ham:Original} can be written as
\begin{align*}
H_0(u)=&\sum_{\jj\in \Z^2}|\jj|^2 |u_{\jj}|^2 + \frac12\sum_{\jj_i\in \Z^2 \atop \jj_1-\jj_2+\jj_3-\jj_4=0}u_{\jj_1}\bar u_{\jj_2}u_{\jj_3}\bar u_{\jj_4}\\
=& \sum_{\jj\in \Z^2}|\jj|^2 |u_{\jj}|^2 -\frac12 \sum_{\jj\in \Z^2}|u_{\jj}|^4 +2\overbrace{\left(\sum_{\jj\in \Z^2}|u_\jj|^2\right)^2}^{M(u)^2}+ \frac12\sum_{\jj_i\in \Z^2 \atop \jj_1-\jj_2+\jj_3-\jj_4=0}^{\star}u_{\jj_1}\bar u_{\jj_2}u_{\jj_3}\bar u_{\jj_4} 
\end{align*}
where the $\sum^\star$ means the sum over the quadruples $\jj_i$ such that $\{\jj_1,\jj_3\}\neq \{\jj_2,\jj_4\}$.

Since the mass $M(u)$ in \eqref{def:NLS:mass} is a constant of motion, we make 
a trivial phase shift and consider an equivalent Hamiltonian 
$H(u)=H_0(u)-M(u)^2$, 
\begin{equation}
\label{parto}
H(u) =  \int_{\T^2} \abs{\nabla u(x,y)}^2 \, \di x \, \di y + 
\frac{1}{2}\int_{\T^2}\abs{u(x,y)}^4 \, \di x \, \di y -  M(u)^2
\end{equation}
corresponding to the Hamilton equation
\begin{equation}\label{piri0}
\im \partial_t u = -\Delta u + |u|^2 u -2 M(u) u \ , \qquad (x,y)\in\T^2\ .
\end{equation}
Clearly the solutions of \eqref{piri0} differ from the solutions of \eqref{NLS}  only by a phase shift\footnote{In order to show the equivalence we consider any solution $u(x,t)$ of \eqref{piri0} and consider the invertible map 
$$
u\mapsto v= u\; e^{-2 \im M(u) t } \quad \mbox{with inverse}\quad  v\mapsto u= v\; e^{2 \im M(v) t }. 
$$
Then a direct computation shows that $v$ solves 2D-NLS.
}.  Then, 
\begin{equation}
\label{Ha0}
H(u) = \sum_{\jj\in \Z^2}|\jj|^2 |u_{\jj}|^2 -\frac12 \sum_{\jj\in \Z^2}|u_{\jj}|^4 + \frac12\sum_{\jj_i\in \Z^2 \atop \jj_1-\jj_2+\jj_3-\jj_4=0}^{\star}u_{\jj_1}\bar u_{\jj_2}u_{\jj_3}\bar u_{\jj_4}.
\end{equation}

\subsection{The Birkhoff map for the 1D cubic NLS}\label{sec:1DNLS}
We devote this section to  gathering some properties of the Birkhoff map for the 
integrable 1D NLS equation. These will be used to write the Hamiltonian 
\eqref{Ha0} in a more convenient way. The main reference for this 
section is \cite{AlbertoVeyPaper}.
We shall denote by $B^{s}(r)$ the ball of radius $r$ and center $0$ in the topology of $h^s \equiv h^s(\Z)$. 

\begin{theorem}
\label{thm:dnls}
There exist $r_* >0$  and a symplectic, real analytic map $\Phi$  
with  $\di\Phi(0) = \uno $ such that $\forall s \geq0$ one has the following 
\begin{itemize}
\item[(i)]  ${\Phi} :  B^{s}(r_*) \to h^s$. More precisely, there exists a 
constant  $C>0$ such that for all $0 \leq  r \leq r_*$
\[\sup_{\norm{q}_{h^s}  \leq r} \norm{({\Phi }- \uno)(q)}_{h^s} \leq  C \, 
 r^3  \ .\]

The same estimate holds for $\Phi^{-1}-\uno$ or by replacing the space $h^s$ with the space $\ell^1$.

\item[(ii)]   Moreover, if $q\in h^s$ for $s \geq 1$,   $\Phi$ introduces local Birkhoff coordinates for (NLS-1d) in $h^s$ as follows: the integrals of motion of (NLS-1d) are real analytic functions of the actions $I_j = |z_j|^2$ where $(z_j)_{j \in \Z}=\Phi(q)$. In particular, the Hamiltonian $H_{{\rm NLS1d}}(q) \equiv  \int_\T \abs{\derx q(x)}^2 dx-M(q)^2+ \frac{1}{2} \int_\T \abs{q(x)}^4 dx$, 
the mass $M(q):= \int_\T \abs{q(x)}^2 dx$ and the momentum $P(q):= -\int_\T \bar q(x) \im \derx q(x) dx$ have the form
\begin{align}
\label{ham.bc}
&\left(H_{{\rm NLS1d}} \circ \Phi^{-1}\right)(z) \equiv h_{\rm 
nls1d}\left((|z_m|^2)_{m \in \Z}\right) = \sum_{m\in \Z} m^2 |z_m|^2  - 
\frac{1}{2} \sum_{m\in \Z} |z_m|^4  + O(|z|^6) \ , \\
\notag
&\left(M\circ \Phi^{-1}\right)(z) = \sum_{m \in \Z} |z_m|^2 \ , \\
\notag&\left(P\circ \Phi^{-1}\right)(z) = \sum_{m \in \Z}  m |z_m|^2 \ . 
\end{align}
\item[(iii)] Define the (NLS-1d)  action-to-frequency map  $I \mapsto \alpha^{{\rm nls1d}}(I)$, where
$
\alpha^{{\rm nls1d}}_m(I) := \frac{\partial h_{\rm nls1d}}{\partial I_m}$, $\forall m \in \Z.$
Then one has the asymptotic expansion 
\begin{equation}\label{freq.bc}
\alpha^{{\rm nls1d}}_m(I)  =  m^2  - I_m + \frac{\varpi_m(I)}{\la m\ra}
\end{equation}
where $\varpi_m(I)$ is at least quadratic in $I$.
\end{itemize}
\end{theorem}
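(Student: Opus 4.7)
My plan is to refine the global Birkhoff map $\Phi_{GK}$ of Gr\'ebert and Kappeler \cite{grebert_kappeler}, which is already known to be a real-analytic symplectomorphism from $L^2(\T)$ to $\ell^2(\Z)$ that maps $H^s$ to $h^s$ for every $s \geq 0$, satisfies $\di\Phi_{GK}(0)=\uno$, and carries the 1D NLS Hamiltonian, mass, and momentum into functions of the actions $I_m = |z_m|^2$ alone. The new content beyond \cite{grebert_kappeler} is the quantitative cubic bound on $\Phi_{GK} - \uno$ (in both $h^s$ and $\ell^1$) and the sharp $\langle m\rangle^{-1}$ decay in the frequency asymptotics.

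First I would establish the cubic estimate $\|(\Phi_{GK}-\uno)(q)\|_{h^s} \leq C\|q\|_{h^s}^3$. The key observation is that $H_{\rm NLS1d}$ has no cubic monomials in its Fourier expansion, so the formal Birkhoff normal form procedure has nothing to do at degree three; the first nontrivial step is the quartic normalization, generated by a degree-four polynomial $\chi(q,\bar q)$. Since the time-one flow of $X_\chi$ shifts $q$ by the cubic expression $\{\chi,q\}$, the Taylor series of $\Phi_{GK}$ at the origin has no quadratic term, yielding the stated cubic bound; the bound for $\Phi_{GK}^{-1}-\uno$ follows by Taylor-series inversion. For the $\ell^1$ variant, I would exploit that $\ell^1(\Z)$ is a Banach algebra under convolution: the generator $\chi$ and all higher-order pieces are multilinear sums $\sum_{\vec m} c_{\vec m}\, q_{m_1}^{\sigma_1}\cdots q_{m_k}^{\sigma_k}$ with momentum-conservation constraints and uniformly bounded coefficients, hence each multilinear term of $\Phi_{GK}$ is an absolutely convergent convolution series obeying the cubic $\ell^1$ estimate.

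Part (ii) is then a matter of matching Taylor coefficients. Under the momentum and energy selection rules $m_1-m_2+m_3-m_4=0$ and $m_1^2-m_2^2+m_3^2-m_4^2=0$, the only resonant quartic monomials are the trivial ones with $\{m_1,m_3\}=\{m_2,m_4\}$, which are precisely those excluded from $\sum^\star$ in \eqref{Ha0}. Consequently, the Birkhoff map eliminates the entire starred sum, leaving $\sum m^2|z_m|^2 - \tfrac12 \sum |z_m|^4$ plus an $O(|z|^6)$ remainder which, by the integrability assertion of $\Phi_{GK}$, depends only on actions. The mass and momentum, being quadratic Casimirs in Fourier, are already action functions; since $\di\Phi_{GK}(0)=\uno$ and all higher corrections preserve the momentum selection rules, their images are exactly the stated sums $\sum |z_m|^2$ and $\sum m|z_m|^2$.

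For (iii), differentiating the identity in (ii) with respect to $I_m$ yields $\alpha_m = m^2 - I_m + O(|I|^2)$, so I only need the extra $\langle m\rangle^{-1}$ decay inside the remainder $\varpi_m(I)/\langle m\rangle$. This is the most delicate point and reflects a smoothing property of the integrable hierarchy: the sixth-order action-dependent contribution to $H_{\rm NLS1d} \circ \Phi_{GK}^{-1}$ is built from quasi-local expressions in $q$ whose Fourier content in terms of actions is suppressed by one derivative in the mode index. I would establish this rigorously from the asymptotic expansion of the Zakharov–Shabat spectral data, from which both actions and frequencies arise as functionals of $q$ via trace formulae: after extracting the leading normal form term, the next correction to $\alpha_m$ can be written as a pairing of $I$-dependent quantities with kernels that explicitly gain one derivative in $m$. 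Obtaining this $\langle m\rangle^{-1}$ gain uniformly in $s$ and compatibly with the $\ell^1$ framework is the main technical obstacle of the theorem, since the abstract normal form machinery alone would only yield a bounded $\varpi_m(I)$ without the decay factor.
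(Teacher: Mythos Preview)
The paper does not give an independent proof of this theorem; it simply assembles citations: item (i) is quoted from \cite{AlbertoVeyPaper}, item (ii) from \cite{grebert_kappeler}, and item (iii) from Theorem~1.3 of \cite{KST}. So your sketch is not being compared to an argument in the paper but to the content of those references.

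That said, there is a genuine gap in your approach to (i). You argue that $\Phi_{GK}-\uno$ has no quadratic term because the formal Birkhoff normal form generator is quartic, and then bootstrap the $\ell^1$ bound from the Banach-algebra property of $\ell^1$ under convolution. The trouble is that the Gr\'ebert--Kappeler map is \emph{not} constructed as a composition of time-one flows of polynomial Hamiltonians; it is built from the spectral theory of the Zakharov--Shabat operator. Your reasoning implicitly identifies $\Phi_{GK}$ with the formal Birkhoff series, but the formal series need not converge, and the tangency between the two (which does hold and does yield the vanishing quadratic term) requires a separate uniqueness-of-normal-form argument that you have not supplied. More seriously, the $\ell^1$ estimate is the main technical content of \cite{AlbertoVeyPaper}: one must prove that the spectrally-defined map is \emph{majorant} analytic in Fourier--Lebesgue spaces, which does not follow from knowing abstractly that each Taylor coefficient is a convolution-type multilinear form with bounded symbol. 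The coefficients arising from the inverse spectral construction are not a priori of that simple form, and controlling them in $\ell^1$ is precisely the work done in that reference.

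Your treatment of (ii) is correct in spirit and matches the standard resonance computation. For (iii) you correctly identify the spectral trace-formula route and flag it as the hard step; this is indeed what \cite{KST} carries out, and your sketch is a reasonable outline of that paper's strategy, though of course the actual derivation of the $\langle m\rangle^{-1}$ gain from the periodic spectral asymptotics is substantial.
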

\begin{proof}
Item $(i)$ is the main content of \cite{AlbertoVeyPaper}, where it is proved that the Birkhoff map is majorant analytic between some Fourier-Lebesgue spaces. Item $(ii)$ is proved in \cite{grebert_kappeler}. Item $(iii)$ is Theorem 1.3 of  \cite{KST}.
\end{proof}

\begin{remark}
 Theorem \ref{thm:dnls}  implies that all  solutions of 1D NLS have Sobolev norms uniformly bounded  in time (as it happens  for other integrable systems, like  KdV and Toda lattice, see e.g. \cite{BambusiM16, Kappeler16}). On the contrary, the Szeg\H{o} equation is an integrable system which exhibits growth of Sobolev norms \cite{GerardG15}.
\end{remark}


\subsection{Adapted variables}
The aim of this section is to write the   Hamiltonian \eqref{parto}, the mass $M$  \eqref{def:NLS:mass} and the momentum $P$ \eqref{def:NLS:momentum} in the local variables around the finite gap solution corresponding to 
\[
\begin{cases}
|z_{\tm_k}|^2&=I_k , \qquad k=1, 2, \ldots, \tk \label{finitegapZ}\\
z_m&=0 , \qquad \,\  m\in \Z\setminus \cS_0.
\end{cases}
\]
To begin with, we start from the Hamiltonian in Fourier coordinates \eqref{Ha0}, 
and set
\[
q_m:= u_{(m,0)}\quad \mbox{ if } \  m\in \Z\,,\qquad a_{\jj}= u_{\jj} \quad \mbox{ if } \ \jj=(m,n)\in \Z^2\,,\; n\neq 0 \ .
\] 
We rewrite the  Hamiltonian accordingly in increasing degree in $a$, obtaining 
\begin{align}
\notag
H(q, a)= 
& \sum_{m \in \Z} m^2 |q_m|^2 -\frac12 \sum_{m \in \Z} |q_m|^4 +  \frac12\sum_{m_i\in \Z \atop m_1-m_2+m_3-m_4=0}^{\star}q_{m_1}\bar q_{m_2}q_{m_3}\bar q_{m_4}+\\
\notag
 & + \sum_{\jj\in \Z^2\setminus \Z} |\jj|^2 |a_{\jj}|^2 + 2\sum^\star_{\jj_i=(m_i,n_i)\,,i=3,4\,,\; n_i\neq 0 \atop {
m_1-m_2+m_3-m_4=0\atop  n_3-n_4=0}} q_{m_1}\bar q_{m_2} a_{\jj_3}\bar a_{\jj_4}  +\operatorname{Re} \sum_{ \jj_i=(m_i,n_i)\,,i=2,4\,,\;n_i\neq 0 \atop {
m_1-m_2+m_3-m_4=0 \atop  n_2+n_4=0} }  \bar q_{m_1} a_{\jj_2}  \bar q_{m_3}  a_{\jj_4}\\
\notag
 & + 2 \operatorname{Re } \sum_{ \jj_i=(m_i,n_i)\,,i=2,3,4\,,\; n_i\neq 0
\atop {m_1-m_2+m_3-m_4=0\atop  -n_2+n_3-n_4=0}} q_{m_1} \bar a_{\jj_2} a_{\jj_3} \bar a_{\jj_4} \\
\notag
& + \frac{1}{2}\sum^\star_{ \jj_i=(m_i,n_i)\,,i=1,2,3,4\,,\;  n_i\neq 0\atop{ \jj_1-\jj_2+\jj_3-\jj_4 = 0}} a_{\jj_1} \bar a_{\jj_2} a_{\jj_3} \bar a_{\jj_4}-\frac12\sum_{\jj\in \Z^2\setminus \Z}|a_\jj|^4\\
&=: H_{\rm nls1d}(q)+H^{\rm II}(q, a)+H^{\rm III}(q, a)+H^{\rm IV}(a).\notag
\end{align}
{\bf Step 1:} First we do the following change of coordinates, which amounts to 
introducing Birkhoff coordinates on the line $\Z\times \{0\}$. We set
\begin{align}
\notag&\left( (z_m)_{m \in \Z}, (a_{\jj})_{\jj\in \Z^2\setminus 
\Z}\right)\mapsto \left((q_m)_{m \in \Z}, (a_{\jj})_{\jj \in \Z^2\setminus 
\Z}\right)\\
\notag&(q_m)_{m \in \Z} =\Phi^{-1}\left((z_m)_{m \in \Z}\right) , \ \ \  
a_{\jj}=u_{\jj}, \quad \jj\in \Z^2\setminus \Z.
\end{align}
In those new coordinates, the Hamiltonian becomes
\begin{align*}
H(z, a)=&H_{\rm nls1d}(\Phi^{-1}(z))+H^{\rm II}(\Phi^{-1}(z), a)+H^{\rm 
III}(\Phi^{-1}(z), a)+H^{\rm IV}(a),
 \end{align*}
where
\[
  \quad H_{\rm nls1d}(\Phi^{-1}(z))=h_{\rm nls1d}((|z_m|^2)_{m \in \Z}) .\\
\]
{\bf Step 2:} Next, we go to action-angle coordinates only on the set 
$\cS_0=\{\tm_1, \ldots, \tm_d\}\subset \Z\times \{0\}$ and rename $z_m$ for $m 
\notin \cS_0$ as $a_{(m, 0)}$, as follows
\begin{align*}
\left(\yy_i, \theta_i, a_{\jj}\right)_{\substack{1\leq i \leq \td\\\jj \in \Z^2\setminus \cS_0}} &\mapsto (z_m, a_{\jj})_{m \in \Z, \jj \in \Z^2\setminus \Z}\\
z_{\tm_i}&= \sqrt{I_i +\yy_i} \ e^{\im \theta_i}, \qquad \tm_i \in \cS_0, \\
z_{m}&=a_{(m, 0)}, \qquad m\in \Z\setminus \cS_0,\\
a_{\jj}&=a_{\jj}, \qquad \jj \in \Z^2\setminus \Z.
\end{align*}
In those coordinates, the Hamiltonian becomes (using \eqref{ham.bc})
\begin{align}
\mathcal{H}(\yy, \theta, a)=&\ h_{\rm nls1d}(I_1 +\yy_1, \ldots, I_\td +\yy_\td,  \left(|a_{(m, 0)}|^2\right)_{m \notin \cS_0}) \label{penguin1}\\
&+H^{\rm II}\left(\Phi^{-1}\left(\sqrt{I_1 +\yy_1}e^{\im \theta_1}, \ldots, \sqrt{I_\td +\yy_\td}e^{\im \theta_\td}, (a_{(m, 0)})_{m \notin \cS_0}\right), (a_{(m,n)})_{n\neq 0}\right) \label{penguin2}\\
&+H^{\rm III}\left(\Phi^{-1}\left(\sqrt{I_1 +\yy_1}e^{\im \theta_1}, \ldots, \sqrt{I_\td +\yy_\td}e^{\im \theta_\td}, (a_{(m, 0)})_{m \notin \cS_0}\right), (a_{(m,n)})_{n\neq 0}\right)\label{penguin3}\\
&+H^{\rm IV}\left((a_{(m,n)})_{n\neq 0}\right)\label{penguin4}.
\end{align}
{\bf Step 3:} Now, we expand each line by itself. By Taylor expanding around the 
finite-gap torus corresponding to $(\yy, \theta, a)=(0,\theta, 0)$ we obtain, 
up to an additive constant,

\begin{align*}
h_{\rm nls1d}\left(I_1 +\yy_1, \ldots, I_\td +\yy_\td, (|a_{(m, 0)}|^2)_{m \notin \cS_0}\right)=&\sum_{i=1}^{\tk}\partial_{\tm_i} h_{\rm nls1d}(I_1, \ldots, I_\tk, 0)\yy_i\\
&+\sum_{m \in \Z\setminus \cS_0}\partial_{m} h_{\rm nls1d}(I_1, \ldots, I_\tk, 0)|a_{(m, 0)}|^2\\
&-\frac{1}{2}\left(|\yy|^2+\sum_{m \in \Z\setminus \cS_0}|a_{(m, 0)}|^4\right)\\
&+O\left(|I| \left\{\sum_{j=1}^\tk \yy_j +\sum_{m \notin \cS_0}|a_{(m,0)}|^2\right\}^2\right)\\
&+O\left( \left\{\sum_{j=1}^\tk \yy_j +\sum_{m \notin 
\cS_0}|a_{(m,0)}|^2\right\}^3\right),
\end{align*}
where we have used formula \eqref{ham.bc} in order to deduce that $\frac{\partial^2 h_{nls1d}}{\partial I_m \partial I_n}(0)=-\delta_{n}^m$ where $\delta_n^m$ is the Kronecker delta.

The following lemma follows easily from Theorem \ref{thm:dnls} (particularly formulae \eqref{ham.bc} and \eqref{freq.bc}):

\begin{lemma}[Frequencies around the finite gap torus] Denote 
$$
\partial_{I_{\tm_j}} h_{\rm nls1d}(I_1, \ldots, I_\tk, 0)=\tm_j^2-\widetilde \lambda_j(I_1, \ldots, I_\tk).
$$
Then,
\begin{enumerate}
\item 
The map $(I_1, \ldots, I_{\tk}) \mapsto \widetilde\lambda(I_1, \ldots, I_\tk)=(\widetilde\lambda_i (I_1, \ldots, I_{\tk}))_{1\leq i \leq \tk}$ is a diffeomorphism from a small neighborhood of 0 of $\R^\tk$ to a small neighborhood of 0 in $\R^\tk$. Indeed, $\widetilde \lambda=$Identity +(quadratic in $I$).
More precisely, there exists $\e_{1d}>0$ such that if $0<\e< \e_{1d}$ and  
$$
\widetilde \lambda(I_1, \ldots, I_\tk)=\e \lambda , \quad \lambda \in 
\left(\frac12, 1\right)^\tk
$$
then $(I_1, \ldots, I_{\tk})=\e \lambda +O(\e^2)$. From now on, and to simplify notation, we will use the vector $\lambda$ as a parameter as opposed to $(I_1, \ldots, I_{\tk})$, and we shall set the vector
$$
\omega_i(\lambda)=\tm_i^2-\e \lambda_i, \qquad 1\leq i\leq \tk
$$ 
to denote the frequencies at the tangential sites in $\cS_0$.
\item For $m \in \Z\setminus \cS_0$, denoting $\Omega_m(\lambda):=\partial_{I_m} h_{\rm nls1d}(I_1(\lambda), \ldots, I_\tk(\lambda), 0)$, we have
\begin{equation}\notag
\Omega_m(\lambda) := m^2 +\frac{\varpi_m(I(\lambda))}{\la m\ra}\,,\quad \mbox{with} \ \ \  \sup_{\lambda\in (\frac{1}{2}, 1)^\td}\, \sup_{m \in \Z }|\varpi_m(I(\lambda))| \le C\e^2 \ . 
\end{equation}
\end{enumerate}
\end{lemma}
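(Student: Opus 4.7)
The plan is to read off both claims as direct consequences of the asymptotic expansion \eqref{freq.bc} from Theorem \ref{thm:dnls}(iii), applied at the point $(I_1,\dots,I_\tk,0)$ where only the tangential actions are turned on. Recalling that $\alpha^{\rm nls1d}_m(I)=\partial_{I_m}h_{\rm nls1d}(I)$, specializing $m=\tm_j\in\cS_0$ yields
$$
\partial_{I_{\tm_j}}h_{\rm nls1d}(I_1,\dots,I_\tk,0)=\tm_j^2-I_j+\frac{\varpi_{\tm_j}(I_1,\dots,I_\tk,0)}{\langle \tm_j\rangle},
$$
and comparing with the definition $\partial_{I_{\tm_j}}h_{\rm nls1d}=\tm_j^2-\widetilde\lambda_j$ gives the explicit formula
$$
\widetilde\lambda_j(I_1,\dots,I_\tk)=I_j-\frac{\varpi_{\tm_j}(I_1,\dots,I_\tk,0)}{\langle \tm_j\rangle}.
$$

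For part (1), since $\varpi_{\tm_j}$ vanishes at least to order two at $I=0$, the map has the form $\widetilde\lambda(I)=I+Q(I)$ with $Q$ at least quadratic; in particular $d\widetilde\lambda(0)=\mathrm{Id}$, so the inverse function theorem provides a local diffeomorphism between neighborhoods of $0$ in $\R^\tk$. Inverting the relation $\widetilde\lambda(I)=\e\lambda$ with $\lambda\in(1/2,1)^\tk$ and $\e<\e_{1d}$ small (so as to remain inside the domain of the local inverse) gives $I=\e\lambda-Q(I)=\e\lambda+O(\e^2)$, uniformly in $\lambda$ on the compact box $[1/2,1]^\tk$.

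For part (2), evaluating \eqref{freq.bc} at $m\in\Z\setminus\cS_0$ and at the finite-gap point, where $I_m=0$, yields
$$
\Omega_m(\lambda)=\alpha^{\rm nls1d}_m(I(\lambda))=m^2+\frac{\varpi_m(I(\lambda))}{\langle m\rangle},
$$
which is the claimed expression. The bound $|\varpi_m(I(\lambda))|\le C\e^2$, uniformly in $m\in\Z$ and in $\lambda\in(1/2,1)^\tk$, follows from combining the quadratic vanishing of $\varpi_m$ at $I=0$ with the estimate $|I(\lambda)|=O(\e)$ from part (1).

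The entire argument is essentially bookkeeping: no serious obstacle arises since the real work has been done in Theorem \ref{thm:dnls}. The only point that deserves some care is extracting the uniformity in $m$ for the $O(\e^2)$ bound on $\varpi_m(I(\lambda))$; this is implicit in \eqref{freq.bc} (where $\varpi_m$ is analytic in $I$ with a uniform-in-$m$ quadratic estimate on a fixed neighborhood of $0$), and one should cite \cite{KST} if a sharper control is desired.
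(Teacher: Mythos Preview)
Your proof is correct and follows exactly the approach indicated in the paper, which simply states that the lemma ``follows easily from Theorem~\ref{thm:dnls} (particularly formulae \eqref{ham.bc} and \eqref{freq.bc})'' without spelling out the details. You have faithfully unpacked this: reading off $\widetilde\lambda_j(I)=I_j-\varpi_{\tm_j}(I,0)/\langle\tm_j\rangle$ from \eqref{freq.bc}, invoking the inverse function theorem for part~(1), and using $I_m=0$ for $m\notin\cS_0$ together with the quadratic vanishing of $\varpi_m$ for part~(2).
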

With this in mind,  line \eqref{penguin1} becomes 
\begin{align*}
h_{\rm nls1d}\left(I_1 +\yy_1, \ldots, I_\td +\yy_\td, (|a_{(m, 0)}|^2)_{m \notin \cS_0}\right)=\,&\omega(\lambda)\cdot \yy +\sum_{m \in \Z\setminus \cS_0}\Omega_m(\lambda) \left|a_{(m,0)}\right|^2\\
&-\frac{1}{2}\left(|\yy|^2+\sum_{m \in \Z\setminus \cS_0}\left|a_{(m, 0)}\right|^4\right)\\
&+O\left(|I| \left\{\sum_{j=1}^\tk \yy_j +\sum_{m \notin \cS_0}\left|a_{(m,0)}\right|^2\right\}^2\right)\\
&+O\left( \left\{\sum_{j=1}^\tk \yy_j +\sum_{m \notin \cS_0}\left|a_{(m,0)}\right|^2\right\}^3\right).
\end{align*}
We now analyze \eqref{penguin2}. This is given by
\begin{align*}
\eqref{penguin2}=\sum_{\jj\in \Z^2\setminus \Z} |\jj|^2 |a_{\jj}|^2 + 2\sum^\star_{\jj_i=(m_i,n_i)\,,i=3,4\,,\; n_i\neq 0 \atop {
m_1-m_2+m_3-m_4=0\atop  n_3-n_4=0}} q_{m_1}\bar q_{m_2} a_{\jj_3}\bar a_{\jj_4}  +\operatorname{Re} \sum_{ \jj_i=(m_i,n_i)\,,i=2,4\,,\;n_i\neq 0 \atop {
m_1-m_2+m_3-m_4=0 \atop  n_2+n_4=0} }  \bar q_{m_1} a_{\jj_2}  \bar q_{m_3}  a_{\jj_4}\\
\end{align*}
where we now think $q_m$ as a function of $\yy, \theta, a$.  By Taylor expanding it at $\yy = 0$ and $a = 0$, 
\begin{equation}\label{qmExpand}
\begin{split}
q_m=q_m(\lambda; \yy, \theta, (a_{(m_1, 0)})_{m_1 \in \Z\setminus \cS_0})
  = & \overbrace{q_m(\lambda;0, \theta, 0)}^{=:q_m^{\rm fg}(\lambda; \theta)} + \sum_{i=1}^\td \frac{\partial q_m}{\partial \yy_i}(\lambda;0,\theta,0) \yy_i \\
  &+ 
   \sum_{m_1 \in \Z \setminus \cS_0} \frac{\partial q_m}{\partial a_{(m_1,0)}}(\lambda;0,\theta,0) a_{(m_1, 0)}+\frac{\partial q_m}{\partial \bar a_{(m_1,0)}}(\lambda;0,\theta,0) \overline a_{(m_1, 0)} \\
   & + \sum_{\substack{m_1, m_2 \in \Z\setminus \cS_0\\ \sigma_1, \sigma_2 =\pm 
1}} Q_{m,m_1 m_2}^{\sigma_1 \sigma_2}(\lambda; \theta) 
a_{(m_1,0)}^{\sigma_1}a_{(m_2,0)}^{\sigma_2}+\cO(\yy^2, \yy a , a^3),
  \end{split}
  \end{equation}
  where we have denoted $(q_m^{\rm fg}(\lambda; \theta))_{m \in \Z}$ the finite gap 
torus (which corresponds to $\yy = 0$, $\ba = 0$), and 
  $$
 Q_{m,m_1 m_2}^{\sigma_1 \sigma_2}(\lambda; \theta)=\frac{1}{2} \frac{\partial^2 q_m}{\partial a_{m_1}^{\sigma_1}\partial a_{m_2}^{\sigma_2}}(\lambda;0,\theta,0).
  $$
Therefore, we obtain
\begin{align*}
\eqref{penguin2}=&\sum_{\jj\in \Z^2\setminus \Z} |\jj|^2 |a_{\jj}|^2 + 2\sum^\star_{\jj_i=(m_i,n_i)\,,i=3,4\,,\; n_i\neq 0 \atop {
m_1-m_2+m_3-m_4=0\atop  n_3-n_4=0}} q^{\rm fg}_{m_1}(\lambda; \theta) \bar q^{\rm fg}_{m_2}(\lambda; \theta)a_{\jj_3}\bar a_{\jj_4} \\
& +\operatorname{Re} \sum_{ \jj_i=(m_i,n_i)\,,i=2,4\,,\;n_i\neq 0 \atop {
m_1-m_2+m_3-m_4=0 \atop  n_2+n_4=0} }  
\bar q^{\rm fg}_{m_1}(\lambda; \theta) a_{\jj_2}\bar q^{\rm fg}_{m_3}(\lambda; \theta)  a_{\jj_4}\\
&+\left\{2\sum^\star_{\jj_i=(m_i,n_i)\,,i=3,4\,,\; n_i\neq 0 \atop {
m_1-m_2+m_3-m_4=0\atop  n_3-n_4=0}}\sum_{m_2'\in \Z\setminus \cS_0} {\frac{\partial \bar q_{m_2}}{\partial \bar a_{(m_2',0)}}(\lambda;0,\theta,0)} q^{\rm fg}_{m_1}(\lambda; \theta)\bar a_{(m_2', 0)} a_{\jj_3}\bar a_{\jj_4} +\text{similar cubic terms in } (a, \bar a)\right\}\\
&+\eqref{penguin2}^{(2)}+\eqref{penguin2}^{(\geq 3)}
\end{align*}
where $\eqref{penguin2}^{(2)}$ are degree 2 terms (cf. Definition \ref{def:degree}), $\eqref{penguin2}^{(\geq 3)}$ are of degree $\geq 3$. More precisely,  
\begin{equation}\label{penguin22}
\begin{split}
\eqref{penguin2}^{(2)}=&2\sum^\star_{\substack{\jj_i=(m_i,n_i)\,,i=3,4\,,\; n_i\neq 0 \\
m_1-m_2+m_3-m_4=0\atop  n_3-n_4=0\\1\leq i\leq \tk}} q^{\rm fg}_{m_1}(\lambda; \theta)  \frac{\partial \bar q_{m_2}}{\partial \yy_i}(\lambda;0,\theta,0) \yy_i a_{\jj_3}\bar a_{\jj_4}+\text{similar terms}\\
&+\sum^\star_{\substack{\jj_i=(m_i,n_i)\,,i=3,4\,,\; n_i\neq 0\\
m_1-m_2+m_3-m_4=0\\  n_3-n_4=0\\\sigma_1, \sigma_2=\pm 1,  m_1', m_2'\in \Z\setminus \cS_0}} L_{m_1, m_2, m_1', m_2'}^{\sigma_1, \sigma_2}(\lambda; \theta) a_{(m_1', 0)}^{\sigma_1}a_{(m_2', 0)}^{\sigma_2}a_{\jj_3}\bar a_{\jj_4}+\text{similar terms},
\end{split}
\end{equation}
for some uniformly bounded coefficients $L_{m_1, m_2, m_1', m_2'}^{\sigma_1, \sigma_2}$.

Next, we move on to \eqref{penguin3}, for which we have using equation \eqref{qmExpand}
\begin{equation}\label{penguin32}
\begin{split}
\eqref{penguin3}=&2 \operatorname{Re } \sum_{ \jj_i=(m_i,n_i)\,,i=2,3,4\,,\; n_i\neq 0
\atop {m_1-m_2+m_3-m_4=0\atop  -n_2+n_3-n_4=0}} q^{\rm fg}_{m_1}(\lambda; \theta) \bar a_{\jj_2} a_{\jj_3} \bar a_{\jj_4} \\
&+\underbrace{2 \operatorname{Re } \sum_{ \substack{\jj_i=(m_i,n_i)\,,i=2,3,4\,,\; n_i\neq 0
\\ m_1-m_2+m_3-m_4=0\\  -n_2+n_3-n_4=0}} \frac{\partial q_{m_1}}{\partial 
a_{(m_1',0)}}(\lambda;0,\theta,0) a_{(m'_1, 0)} \bar a_{\jj_2} a_{\jj_3} \bar 
a_{\jj_4} +\text{similar 
terms}}_{\eqref{penguin3}^{(2)}}+\eqref{penguin3}^{(\geq 3)},
\end{split}
\end{equation}
where $\eqref{penguin3}^{(2)}$ are terms of degree 2 and $\eqref{penguin3}^{(\geq 3)}$ are terms of degree $\geq 3$.

In conclusion, we obtain 

\begin{align}
\label{H.2}
\mathcal{H}(\lambda;\yy, \theta, \ba)  
 =  & \cN+\cH^\0(\lambda; \theta, {\bf a})+\cH^\1(\lambda; \theta, {\bf a})+\cH^\2(\lambda;  \yy, \theta, {\bf a})+\cH^{(\geq 3)}(\lambda;  \yy,\theta,  {\bf a}),
\end{align}
where 
\begin{equation}
\label{def:N}
\cN =    \sum_{i=1}^\tk \omega_{\tm_i} (\lambda) \yy_i + \sum_{m\notin \cS_0} \Omega_m(\lambda) |a_{(m,0)}|^2+ \sum_{\jj=(m,n) \in \Z^2 \atop  n\neq 0} |\jj|^2 |a_{\jj}|^2 \\
\end{equation}
\begin{align}
\cH^\0(\lambda; \theta, {\bf a})=\,&2\sum^\star_{\jj_i=(m_i,n_i)\,,i=3,4\,,\; n_i\neq 0 \atop {
m_1-m_2+m_3-m_4=0\atop  n_3-n_4=0}} q^{\rm fg}_{m_1}(\lambda; \theta) \bar q^{\rm fg}_{m_2}(\lambda; \theta)a_{\jj_3}\bar a_{\jj_4}  \label{def of H0}\\&+\operatorname{Re} \sum_{ \jj_i=(m_i,n_i)\,,i=2,4\,,\;n_i\neq 0 \atop {
m_1-m_2+m_3-m_4=0 \atop  n_2+n_4=0} }  \bar q^{\rm fg}_{m_1}(\lambda; \theta) a_{\jj_2}\bar q^{\rm fg}_{m_3}(\lambda; \theta)  a_{\jj_4} \notag\\
\cH^{(1)}(\lambda; \theta, {\bf a})=\,&2 \operatorname{Re } \sum_{ \jj_i=(m_i,n_i)\,,i=2,3,4\,,\; n_i\neq 0
\atop {m_1-m_2+m_3-m_4=0\atop  -n_2+n_3-n_4=0}} q^{\rm fg}_{m_1}(\lambda; \theta) \bar a_{\jj_2} a_{\jj_3} \bar a_{\jj_4} \label{def of H1}\\
&+2\sum^\star_{\jj_i=(m_i,n_i)\,,i=3,4\,,\; n_i\neq 0 \atop {
m_1-m_2+m_3-m_4=0\atop  n_3-n_4=0}}\sum_{m_2' \in \Z\setminus \cS_0} {\frac{\partial \bar q_{m_2}}{\partial \bar a_{(m_2',0)}}(\lambda;0,\theta,0)} q^{\rm fg}_{m_1}(\lambda; \theta)\bar a_{(m_2', 0)} a_{\jj_3}\bar a_{\jj_4}\notag\\
&+ \text{similar cubic terms in } (a, \bar a)\notag\\
\cH^{(2)}(\lambda; \theta, {\bf a})=\,&H^{\rm IV}\left((a_{(m,n)})_{n\neq 0}\right)-\frac{1}{2}\left(|\yy|^2+\sum_{m \in \Z\setminus \cS_0}|a_{(m, 0)}|^4\right) \label{giraffe2}\\
&+O\left(\varepsilon\left\{\sum_{j=1}^\tk \yy_j +\sum_{m \notin \cS_0}|a_{(m,0)}|^2\right\}^2\right)
+\eqref{penguin2}^{(2)}+\eqref{penguin3}^{(2)},\notag
\end{align}
where $\eqref{penguin2}^{(2)}$ and $\eqref{penguin3}^{(2)}$ were defined in \eqref{penguin22} and \eqref{penguin32} respectively. Finally, $\cH^{(\geq 3)}$ collects all remainder terms of degree $\geq 3$.

For short we write $\cN$ as $\cN=\omega(\lambda) \cdot \yy +  \cD$ where $\cD$ is the diagonal operator
\begin{equation}\notag
\cD:=\sum_{\jj=(m,n) \in \Z^2\setminus\cS_0} \Omega_\jj^\0\,  |a_\jj|^2   
\end{equation}
and the normal frequencies $\Omega_\jj^\0$ are defined by
\begin{equation}
\label{def:Omega}
 \Omega^{(0)}_\jj := \left\{ \begin{array}{ll} |\jj|^2 & \text{if}\; \jj=(m,n) \;\text{with} \; n\neq 0 \\ \Omega_m(\lambda) & \text{if}\; \jj=(m,0) , \ m \notin \cS_0 
 \end{array}\right. \ .
\end{equation}

Proceeding as in \cite{Maspero-Procesi}, one can prove the following result:
 \begin{lemma}
 \label{lem:norm.ham}
Fix $\rho >0$. There exists $\e_\ast>0$ and for any  $0 \leq \e \leq \e_\ast$, there exist $r_\ast \leq  \sqrt{\e}/4 $  and $C >0$ such that    $\cH^\0, \cH^\1, \cH^{(2)}$ and $\cH^{(\geq 3)}$    belong to $\cA_{\rho,r_\ast}^\cO$ and 
$\forall 0 < r \leq r_*$
\begin{equation}
\label{lem:norm.ham1}
|\cH^\0|_{\rho,r}^\cO \le C\e\,,\qquad |\cH^\1|_{\rho,r}^\cO \le C\sqrt{\e}r\,,\qquad |\cH^{(2)}|_{\rho,r}^\cO \le C r^2, \qquad 
|\cH^{(\geq 3)}|_{\rho,r}^\cO \le C \frac{r^3}{\sqrt{\e}} .
\end{equation}
\end{lemma}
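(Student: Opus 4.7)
The plan is to reduce the statement to a term-by-term bookkeeping exercise, based on two key analytic inputs: the $\ell^1$-majorant analyticity of the Birkhoff map from Theorem~\ref{thm:dnls}(i), and the quantitative parametrization $I_i = \e\lambda_i + O(\e^2)$ of the finite-gap actions for $\lambda \in \cO$. The computation follows the lines of the corresponding argument in~\cite{Maspero-Procesi}.

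The first step is to record preliminary estimates on the finite-gap profile
\[
q^{\rm fg}(\lambda; \theta) := \Phi^{-1}\!\left((\sqrt{I_i(\lambda)}\,e^{\im \theta_i})_{i=1}^\tk,\, 0\right),
\]
namely $\|q^{\rm fg}(\lambda; \cdot)\|_{\ell^1} \le C\sqrt{\e}$ uniformly in $\lambda$, together with an analytic extension of $\theta \mapsto q^{\rm fg}(\lambda; \theta)$ to $\T^\tk_\rho$ that absorbs the $e^{|\ell|\rho}$ weights of the majorant norm. Differentiating the composition of $\Phi^{-1}$ with the action-angle map then yields $\ell^1$-majorant bounds on the partial derivatives $\partial^j_{\yy}\, \partial^k_{a_{(\cdot,0)}} q_m(\lambda; 0, \theta, 0)$, with the crucial counting rule that each $\yy$-derivative costs a factor $\e^{-1/2}$ (from the differentiation of $\sqrt{I+\yy}$ at $\yy=0$), while derivatives in the $a_{(m,0)}$ directions carry no such loss.

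Given these bounds, the four estimates in \eqref{lem:norm.ham1} reduce to a careful accounting of powers of $\e$ and $r$ against the vector-field norm $\bnorm{\cdot}_r$. For $\cH^\0$, the two finite-gap factors produce $O(\e)$ coefficients, and the convolution structure imposed by $\jj_1 - \jj_2 + \jj_3 - \jj_4 = 0$, combined with $\|\ba\|_{\ell^1} \le r$, yields the bound via Young's inequality. For $\cH^\1$, the single remaining $q^{\rm fg}$ (or a $\partial_{a_{(m',0)}} q$ paired with one $q^{\rm fg}$) gives $O(\sqrt{\e})$ coefficients on cubic-in-$\ba$ monomials, producing $O(\sqrt{\e}\, r)$. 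For $\cH^\2$, the key observation is that each $\yy$-derivative is accompanied by a factor $q^{\rm fg}\sim\sqrt{\e}$, so the $\sqrt{\e}$ and $\e^{-1/2}$ cancel and the coefficients are $O(1)$; combined with degree-$2$ monomials this yields $O(r^2)$. Finally, for $\cH^{(\geq 3)}$ the worst contribution comes from Taylor remainders carrying one uncompensated $\yy$-derivative, leaving a factor $\e^{-1/2}$; the resulting $O(r^3/\sqrt{\e})$ bound is essentially sharp and is what dictates the constraint $r \le \sqrt{\e}/4$.

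The Poisson commutation with $\cM, \cP_x, \cP_y$ is automatic from the construction: \eqref{NLS} is invariant under these group actions, $\Phi$ commutes with the $1$D mass and momentum, and the $y$-momentum is trivially preserved since our transformations act only on the $n=0$ slice. The Lipschitz dependence on $\lambda \in \cO$ reduces by the chain rule to the smoothness of $\lambda\mapsto I(\lambda)$ and the analytic dependence of $\Phi^{-1}$ on its arguments. The main technical obstacle, and the reason Theorem~\ref{thm:dnls}(i) is essential rather than the weaker $h^s$ statement, is closing the majorant estimates on the underlying $\ell^1$ convolution sums: without the $\ell^1$-analyticity of $\Phi^{-1}$, the $\ell^1$-based vector-field norm $\bnorm{\cdot}_r$ would simply fail to be summable.
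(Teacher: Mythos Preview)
Your proof strategy is correct and aligns with the paper's approach: the paper does not give an explicit proof but simply defers to \cite{Maspero-Procesi}, and your sketch captures precisely the power-counting argument carried out there, with the correct identification of the $\sqrt{\e}$ per $q^{\rm fg}$ factor, the $\e^{-1/2}$ cost per $\yy$-derivative, and the $\ell^1$-majorant analyticity of $\Phi^{-1}$ from Theorem~\ref{thm:dnls}(i) as the essential analytic input.
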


\section{Reducibility theory of the quadratic part}\label{sec:reducibility}

In this section, we review the reducibility of the quadratic part $\cN+\cH^\0$ (see \eqref{def:N} and \eqref{def of H0})
of the Hamiltonian, which is the main part of the work \cite{Maspero-Procesi}. 
This will be a symplectic linear change of coordinates that transforms the 
quadratic part into an effectively diagonal,  time independent expression. 

\subsection{Restriction to an invariant sublattice $\Z^2_N$} For $N\in \N$, we define the sublattice $\Z_N^2:= \Z\times N\Z$ and remark that it is invariant for the flow in the sense that the subspace 
	$$
	E_N:=\{a_\jj=\bar a_\jj=0\,,\quad \mbox{for} \quad \jj\, \notin \Z^2_N\}
	$$
	is invariant for the original NLS dynamics and that of the Hamiltonian \eqref{H.2}. From now on, we restrict our system to this invariant sublattice, with 
	\begin{equation}\label{def:SizeN}
	N > \max_{1 \leq i \leq \td} |\tm_i|. 
	\end{equation}
	The reason for this restriction is that it simplifies (actually eliminates the need for) some genericity requirements that are needed for the work \cite{Maspero-Procesi} as well as some of the normal forms that we will perform later.

It will also be important to introduce the following two subsets of $\mathbb Z_N^2$: 
\begin{equation}\label{def:SetZ}
\sS:=\{(\tm, n): \tm \in \cS_0, \ n \in N \Z, \ n \neq 0\}, \qquad \fZ=\Z^2_N \setminus (\sS \cup \cS_0).
\end{equation}

%

\begin{definition}[$\tL-$genericity]\label{Lgenericity}
Given $\tL\in \N$, we say that $\cS_0$ is $\tL$-generic if it satisfies the condition
\begin{equation}\label{pop}
\sum_{i = 1}^\td \ell_i\tm_i\neq 0 \qquad \forall \; 0<|\ell|\le \tL.
\end{equation}
\end{definition}

\subsection{Admissible monomials and reducibility}

The reducibility of the quadratic part of the Hamiltonian will introduce a change of variables that modifies the expression of the mass $\cM$ and momentum $\cP$ as follows. 
Let us set
	\begin{equation}
		\label{mp.4}
		\begin{aligned}
		&\wtcM:=  \sum_{i=1}^\tk \yy_i + \sum_{(m , n) \in \fZ  }|a_j|^2  , \\
		&\wtcP_x:= \sum_{i=1}^\tk  \tm_i  \yy_i + \sum_{(m,n) \in\fZ}\!\!\!\!m \, |a_{(m,n)}|^2 ,  \\
		&\wtcP_y:= \sum_{(m,n) \in \Z_N^2} n |a_{(m,n)}|^2.
		\end{aligned}
		\end{equation} 
		
These will be the expressions for the mass and momentum after the change of variables introduced in the following two theorems. Notice the absence of the terms $\sum_{\substack{1\leq i \leq \tk\\n\in N\Z}} |a_{(\tm_i, n)}|^2$ and $\sum_{\substack{1\leq i \leq \tk\\n\in N\Z}} \tm_i |a_{(\tm_i, n)}|^2$ from the expressions of $\wtcM$ and $\wtcP_x$ above. These terms are absorbed in the new definition of the $\yy$ and ${\bf a}$ variables.

		\begin{definition}[Admissible monomials]
		\label{rem:adm3}
		Given $\bj = (\jj_1, \ldots, \jj_p) \in (\Z^2_N\setminus\cS_0)^p$, 
$\ell \in \Z^\tk$, $l \in \N^\td$, and $\sigma= (\sigma_1, \ldots, \sigma_p) \in 
\{-1, 1\}^p$, we  say that $(\bj , \ell, \sigma)$ is {\em admissible}, and 
denote $(\bj , \ell, \sigma) \in \mathfrak A_p$,  if the monomial $\mathfrak m= 
e^{\im \theta \cdot \ell} \cY^l\, a_{\jj_1}^{\sigma_1} \, \ldots 
a_{\jj_p}^{\sigma_p}$ Poisson commutes with $\wtcM,\wtcP_x, \wtcP_y$. We call a 
monomial $ e^{\im \theta \cdot \ell} \cY^l\, a_{\jj_1}^{\sigma_1} \, \ldots 
a_{\jj_p}^{\sigma_p}$ admissible if $(\bj , \ell, \sigma)$ is admissible. 
		\end{definition}

\begin{definition}\label{def:R2}
We define the {\em resonant set at degree 0}, 
\begin{equation}
\label{res2}
\fR_2:=\{ (\jj_1, \jj_2, \ell, \sigma_1, \sigma_2) \} \in \mathfrak A_2: 
\ell=0,  \ \ \sigma_1=-\sigma_2, \ \  \jj_1=\jj_2\}.
\end{equation}
\end{definition}

\begin{theorem}
	\label{thm:reducibility}
 	Fix $\e_0>0$ sufficiently small. There exist positive $\rho_0, \gamma_0, \tau_0, 
r_0, \tL_0$ (with $\tL_0$ depending only on $\td$) such that the following holds 
true uniformly for all $0<\e\le \e_0$:  For an $\tL_0$-generic choice of the set 
$\Tan$  (in the sense of Definition \ref{Lgenericity}), there exist a  compact 
{\em domain} $\cO_0 \subseteq (1/2,1)^\tk$, satisfying $| (1/2,1)^\tk\setminus 
\cO_0|\leq \e_0$, 
and Lipschitz (in $\lambda$) 
functions $\{\Omega_\jj\}_{\jj\in \Z_N^2\setminus \cS_0}$ defined on $\cO_0$ 
(described more precisely in Theorem \ref{thm:reducibility4} below) such that:
	
	\begin{enumerate}
	
\item  The set 
	\begin{equation}
	\label{2.mc}
	\cC^{(0)}:=\left\{\lambda\in \cO_0:\;	\abs{\omega \cdot \ell + \s_1\Omega_{\jj_1}(\lambda, \e)+ \s_2 \Omega_{\jj_2}(\lambda, \e)} \geq \gamma_0 \frac{\e}{\la \ell \ra^{\tau_0}} \ ,\;\forall (\jj,\ell,\s)\in \fA_2\setminus \fR_2\right\}
		\end{equation}
has positive measure. In fact $|\cO_0\setminus \cC^{(0)}|\lesssim \e_0^{\kappa_0}$  for some $\kappa_0>0$ independent of $\e_0$. 

\item For each $\lambda \in \cC^{(0)}$ and all $r \in [0, r_0]$, $\rho \in [\frac{\rho_0}{64},  \rho_0]$, there exists an invertible symplectic change of variables $\cL^{(0)}$, that is well defined and majorant analytic from $D(\rho/8, \zeta_0 r) \to D(\rho,r)$ (here $\zeta_0>0$ is a constant depending only on $\rho_0,\max(|\tm_k|^2)$) and such that if $\ba\in h^1(\Z_N^2 \setminus \cS_0)$, then
\begin{equation}\notag
(\cN+\cH^{(0)})\circ \cL^{(0)}(\yy, \theta, \ba) = \omega \cdot \yy + \sum_{\jj \in \Z_N^2 \setminus \cS_0} \Omega_\jj\, |a_\jj|^2.
\end{equation}

\item The mass $\cM$ and the momentum $\cP$ (defined in \eqref{mp.1}) in the new coordinates are given by 
\begin{equation}\label{mass.momentum.L}
\cM\circ \cL^{(0)}= \wtcM\,,\quad \cP\circ \cL^{(0)}= \wtcP \ , 
\end{equation}
where $\wtcM$ and $\wtcP$ are defined in \eqref{mp.4}.

\item The map $\cL^{(0)}$
  maps $h^1$ to itself and has the following form
$$
\cL^{(0)}:\quad 	\ba \mapsto L(\lambda; \theta, \e)\ba, \qquad \yy \mapsto  \yy + (\ba, Q(\lambda; \theta,\e)\ba), \qquad \theta \mapsto \theta.
	$$
The same holds for the inverse map $(\cL^{(0)})^{-1}$.

\item The linear maps $L(\lambda; \theta, \e)$ and $Q(\lambda; \theta,\e)$ are block diagonal in the $y$ Fourier modes, in the sense that $L={\rm diag}_{n\in N\N}(L_{n})$ with each $L_{n}$ acting on the sequence $\{a_{(m,n)},a_{(m,-n)}\}_{m\in \Z}$ (and similarly for $Q$). Moreover, $L_0={\rm Id}$ and $L_n$ is of the form ${\rm Id}+S_n$ where $S_n$ is a smoothing operator in the following sense: with the smoothing norm $\lceil \cdot \rfloor_{\rho,-1}$ defined in \eqref{def.smoothingnorm} below
$$
\sup_{n\neq 0} \lceil S_n\circ P_{\{|m|\geq (\tm_\tk+1)\}} \rfloor_{\rho,-1} \lesssim \varepsilon,
$$
where $P_{\{|m|\geq K\}}$ is the orthogonal projection of a sequence $(c_m)_{m \in \Z}$ onto the modes $|m| \geq K$.
\end{enumerate}
\end{theorem}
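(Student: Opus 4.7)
The plan is a standard KAM reducibility iteration. By Lemma \ref{lem:norm.ham}, $\cH^{(0)}$ has size $O(\e)$ in $|\cdot|_{\rho,r}^\cO$ and is admissible (Poisson-commutes with $\wtcM, \wtcP$), so $\cN+\cH^{(0)}$ is a small $\theta$-dependent perturbation of the diagonal $\cD$. I will produce a sequence of symplectic conjugations $\Phi_n=\Phi_{S^{(n)}}$ such that after the first $n$ steps the Hamiltonian takes the form $\omega\cdot\yy+\cD^{(n)}+\cH^{(n)}$, where $\cD^{(n)}=\sum_\jj \Omega^{(n)}_\jj\,|a_\jj|^2$, $\cH^{(n)}$ is admissible and quadratic in $\ba$, and $|\cH^{(n)}|^{\cO_n}_{\rho_n,r_n}\le\e_n$ decreases super-exponentially. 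The generator $S^{(n)}$ is obtained by termwise division of the off-diagonal coefficients of $\cH^{(n)}=\sum h^{(n)}_{\jj,\ell,\sigma}\, e^{\im\ell\cdot\theta}a_{\jj_1}^{\sigma_1}a_{\jj_2}^{\sigma_2}$:
\[
S^{(n)}:=\sum_{(\jj,\ell,\sigma)\in\fA_2\setminus\fR_2}\frac{h^{(n)}_{\jj,\ell,\sigma}}{\im\bigl(\omega(\lambda)\cdot\ell+\sigma_1\Omega^{(n)}_{\jj_1}+\sigma_2\Omega^{(n)}_{\jj_2}\bigr)}\, e^{\im\ell\cdot\theta}\,a_{\jj_1}^{\sigma_1}a_{\jj_2}^{\sigma_2},
\]
and the resonant part of $\cH^{(n)}$ (indices in $\fR_2$, which are precisely the monomials $|a_\jj|^2$) is absorbed into $\cD^{(n+1)}$ via a Lipschitz update $\Omega^{(n+1)}_\jj=\Omega^{(n)}_\jj+O(\e_n)$. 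Since $S^{(n)}$ is quadratic in $\ba$ with no $\yy$-dependence, its time-$1$ flow is linear in $\ba$, leaves $\theta$ fixed, and translates $\yy$ by a quadratic form in $\ba$ --- which is precisely the structure announced in item (4); admissibility of $S^{(n)}$ (inherited from that of $\cH^{(n)}$, since the projection onto $\fA_2\setminus\fR_2$ preserves the selection rules of Remark \ref{leggi_sel}) then implies \eqref{mass.momentum.L}. Majorant-analyticity of $\cL^{(0)}:=\lim_n \Phi_0\circ\cdots\circ\Phi_n$ follows by the usual telescoping argument, the losses in analyticity radius summing to at most $\frac{7}{8}\rho_0$.

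The main obstacle is the measure estimate $|\cO_0\setminus\cC^{(0)}|\lesssim\e_0^{\kappa_0}$, which in turn controls the small divisors along the iteration. The admissibility constraint $(\jj,\ell,\sigma)\in\fA_2\setminus\fR_2$ and the $\tL_0$-genericity of $\cS_0$ (Definition \ref{Lgenericity}) together rule out identical vanishing of the divisor: using $\omega_i(\lambda)=\tm_i^2-\e\lambda_i$ and the asymptotics of $\Omega^{(0)}_\jj$, one shows that each admissible non-trivial combination yields a Lipschitz-in-$\lambda$ function whose gradient is of size $\gtrsim\e$, the role of $\tL_0$-genericity being to prevent the linear forms $\sum_i\ell_i\tm_i$ from vanishing for $0<|\ell|\le\tL_0$ and thereby force the gradient of the divisor to be non-degenerate. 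A routine Lipschitz-in-$\lambda$ complement measure estimate then bounds the excluded set for each fixed $(\jj,\ell,\sigma)$ by $C\gamma_0/\la\ell\ra^{\tau_0}$; summing over $\ell$ (convergent for $\tau_0$ chosen large enough depending on $\td$) and over the finitely many $\jj$-pairs compatible with the selection rules $\pi_x(\alpha,\beta)+\pi(\ell)=0$, $\pi_y(\alpha,\beta)=0$ yields the claimed bound. The iterative frequency shifts $\Omega^{(n)}_\jj-\Omega^{(0)}_\jj$ are uniformly $O(\e)$ and Lipschitz-small, so the second Melnikov condition imposed on the limiting $\Omega_\jj$ persists at all intermediate levels with a mild worsening of constants.

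The structural properties in item (5) follow from one key feature of $\cH^{(0)}$: since the finite-gap potential $q^{\rm fg}(\lambda;\theta)$ depends only on $x$, the coefficients $h^{(0)}_{\jj,\ell,\sigma}$ vanish unless $\sigma_1 n_1+\sigma_2 n_2=0$ for $\jj_i=(m_i,n_i)$. Consequently $L$ preserves each sector $\{(m,n),(m,-n)\}_{m\in\Z}$ for fixed $n\in N\N$. Moreover $L_0=\mathrm{Id}$: inspecting \eqref{def of H0} one sees that every coupling involves at least one mode $\jj\in\Z^2\setminus\Z$, so the $n=0$ sector is untouched by $\cH^{(0)}$ and by the whole iteration. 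This block structure is stable under the KAM step and therefore passes to $\cL^{(0)}$. Finally, the smoothing bound on $S_n$ for $|m|\ge \tm_\tk+1$ is a consequence of two facts: the relevant couplings are mediated by Fourier coefficients of $q^{\rm fg}$, which decay rapidly in $x$-frequency by analyticity, while the small divisor is $\gtrsim|m|^2$ in that range; the resulting one-derivative gain is preserved along the iteration and transfers to $\cL^{(0)}-\mathrm{Id}$ in the norm $\lceil\cdot\rfloor_{\rho,-1}$.
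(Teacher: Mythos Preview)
Your KAM reducibility sketch is the correct approach and is essentially what the paper invokes. Note, however, that the paper does not prove Theorem~\ref{thm:reducibility} itself: it cites Theorems~5.1 and~5.3 of \cite{Maspero-Procesi}, together with the observation that restricting to the sublattice $\Z_N^2$ with $N>\max_i|\tm_i|$ makes the set $\sC$ of \cite{Maspero-Procesi} disjoint from $\Z_N^2$, which is what eliminates further genericity hypotheses on $\cS_0$. Your outline of the homological equation, the linear-in-$\ba$ structure of the time-$1$ flow, the block-diagonality in $n$, and $L_0=\mathrm{Id}$ is accurate.

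One point in your sketch deserves correction. In the measure estimate you claim one sums ``over the finitely many $\jj$-pairs compatible with the selection rules'', but for fixed $\ell$ the constraints $\widetilde\pi_x+\pi(\ell)=0$, $\pi_y=0$ still leave \emph{infinitely} many $(\jj_1,\jj_2)$. The mechanism that makes the sum converge is the asymptotic structure of the $\Omega_\jj$'s (Theorem~\ref{thm:reducibility4}): the divisor decomposes as an integer plus $\e$ times a function drawn from the \emph{finite} list $\{0,\mu_i(\lambda)\}_i$ plus corrections of size $O(\e^2/\langle m\rangle)$; one then runs a finite induction on the number of nontrivial correction terms, at each stage either absorbing a large-$|m|$ correction perturbatively into the previous level or imposing finitely many fresh Diophantine conditions (this is precisely the scheme carried out in Appendix~\ref{app:mes.m} for the analogous fourth-order estimate). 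A related minor slip concerns item~(5): for the off-diagonal couplings with $\sigma_1=-\sigma_2$ and equal $|n|$, momentum conservation forces $|m_1-m_2|$ bounded (the coupling is mediated by the analytic $q^{\rm fg}$), so the divisor $m_1^2-m_2^2=(m_1-m_2)(m_1+m_2)$ grows only \emph{linearly} in $|m|$, not like $|m|^2$; this is exactly consistent with the one-derivative smoothing norm $\lceil\cdot\rfloor_{\rho,-1}$, so your conclusion is right even though the stated exponent is off.
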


The above smoothing norm is defined as follows: Let $S(\lambda; \theta, \e)$ be an operator acting on sequences $(c_k)_{k \in \Z}$ through its matrix elements $S(\lambda; \theta, \e)_{m, k}$. Let us denote by $S(\lambda; \ell, \e)_{m, k}$ the $\theta$-Fourier coefficients of $S(\lambda; \theta, \e)_{m, k}$. 
For $\rho, \nu >0$ we define $\lceil S(\lambda; \theta, \e) \rfloor_{\rho, \nu}$ as:
\begin{equation}\label{def.smoothingnorm}
\lceil S(\lambda; \theta, \e) \rfloor_{\rho, \nu}:=\sup_{\|c\|_{\ell^1}\leq 1} \left\|\left(\sum_{\substack{k\in \Z\\ \ell\in \Z^\tk}} e^{\rho|\ell|} |S_{m, k}(\lambda; \ell, \e) | \langle k \rangle^{-\nu} c_k\right)\right\|_{\ell^1} .
\end{equation}
This definition is equivalent to the more general norm used in Definition 3.9 of \cite{Maspero-Procesi}. Roughly speaking, the boundedness of this norm means that, in terms of its action on sequences, $S$ maps $\langle k \rangle^\nu \ell^1 \to \ell^1$. As observed in Remark 3.10 of \cite{Maspero-Procesi}, thanks to the conservation of momentum this also means that $S$ maps $\ell^1 \to \langle k \rangle^{-\nu} \ell^1$.

\begin{remark}
		Note that in \cite{Maspero-Procesi}  Theorem \ref{thm:reducibility} is proved in $h^s$ norm with $s>1$, for instance in \eqref{def.smoothingnorm} the $\ell^1$ norm is substituted with the $h^s$ one. However the proof only relies on momentum conservation and on the fact that $h^s$ is an algebra w.r.t. convolution, which holds true also for $\ell^1$.
		 Hence the proof of our case is identical and we do not repeat it. 
\end{remark}

%
%
We are able to describe quite precisely the asymptotics of the frequencies $\Omega_\jj$ of Theorem \ref{thm:reducibility}. 

\begin{theorem}
	\label{thm:reducibility4}
	 For any $0<\e\le\e_{0}$ and $\lambda\in \cC^\0$,  the frequencies  $\Omega_\jj \equiv \Omega_\jj(\lambda,\e)$, $\jj = (m,n)\in \Z_N^2\setminus \cS_0$, introduced in Theorem \ref{thm:reducibility} have the following asymptotics:
\begin{equation}
\label{as.omega}
\Omega_\jj(\lambda, \e) =
\begin{cases}
\wtOmega_\jj(\lambda, \e)+\displaystyle{\frac{\varpi_m(\lambda, \e)}{\langle m \rangle}}, \qquad n=0 \\
 \wtOmega_\jj(\lambda, \e) + \displaystyle{\frac{\Theta_{m}(\lambda, \e)}{\la m \ra^2} + \frac{\Theta_{m,n}(\lambda, \e)}{\la m \ra^2 + \la n \ra^2}},  \qquad n\neq 0 
\end{cases} \ , 
\end{equation}
	where 
	\begin{equation}\notag
	\wtOmega_\jj (\lambda, \e) := 
	\begin{cases}
	m^2, & \jj=(m,0), m \notin \cS_0\\ 
	m^2 + n^2   , & \jj=(m,n) \in \fZ  \ , n \neq 0 \\
	\e \mu_i(\lambda) + n^2   \ , & \jj=(\tm_i, n)\in \sS , n\neq 0
	\end{cases}
	\end{equation}
where $\fZ$ and  $\sS$ are the sets defined in \eqref{def:SetZ}.
	
	Here the $\{\mu_i(\lambda)\}_{1 \leq i \leq \tk }$ are the roots of the polynomial 
	\begin{equation}\notag
	P(t,\lambda):= \prod_{i=1}^\tk (t + \lambda_i) - 2 \sum_{i=1}^\tk \lambda_i \, \prod_{k \neq i} (t + \lambda_k),
	\end{equation}
which is irreducible over $\mathbb{Q}(\lambda)[t]$.

Finally $\mu_i(\lambda)$, $\{\varpi_m(\lambda, \e)\}_{m \in \Z\setminus \cS_0}$, $\{\Theta_m(\lambda, \e)\}_{m \in \Z}$ and $ \{\Theta_{m,n}(\lambda, \e)\}_{(m,n) \in \Z_N^2\setminus \Tan}$ fulfill 
\begin{equation}
\label{theta.est} 
\sum_{1 \leq i \leq \tk} | \mu_i(\cdot) |^{\cO_0} +
 \sup_{\e \leq \e_0  } \frac{1}{\e^2}\Big( \sup_{m \in \Z\setminus \cS_0} |\varpi_m(\cdot, \e)|^{\cO_0} +\sup_{m \in \Z} |\Theta_m(\cdot, \e)|^{\cO_0} + \sup_{\substack{(m,n) \in \Z_N^2\\n\neq 0}} |\Theta_{m,n}(\cdot, \e)|^{\cO_0} \Big) \leq  \tM_0 \ 
\end{equation}
for some $\tM_0$ independent of $\e$. 

\end{theorem}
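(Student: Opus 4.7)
My plan is to extract the asymptotics from the KAM construction of Theorem \ref{thm:reducibility}, tracking the frequency corrections at each stage. The final frequencies arise as $\Omega_\jj = \wtOmega_\jj + \sum_{k\geq 1}(\Omega_\jj^{(k+1)} - \Omega_\jj^{(k)})$, where $\wtOmega_\jj$ is the leading piece fixed at the first step and the KAM tail has superexponential decay. Since each correction at step $k\geq 2$ is bounded by the norm of the remaining perturbation, it is $O(\e^2)$ with the appropriate spatial decay inherited from the first step. Therefore the essence of the proof is to analyze the first reducibility step carefully, identifying both the leading shift $\wtOmega_\jj - (m^2+n^2)$ and the first $\e^2$-correction; the corresponding estimates \eqref{theta.est} would then follow by telescoping combined with the bounds from Lemma \ref{lem:norm.ham}.

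The first step consists of diagonalizing the quadratic-in-$(a,\bar a)$ Hamiltonian $\cN+\cH^{(0)}$. Thanks to the selection rules in Remark \ref{leggi_sel} enforced by $\wtcM, \wtcP_x, \wtcP_y$, the averaged (in $\theta$) quadratic form decomposes into finite-dimensional blocks. For $\jj=(m,n)\in\fZ$, there are no resonant partners inside $\Z_N^2\setminus\cS_0$ that also lie on a tangential vertical line, so the block is one-dimensional and the correction is perturbative: one computes
\[
\Omega_{\jj}^{(1)} - (m^2+n^2) \;=\; \langle\cH^{(0)}_{\jj\jj}\rangle_\theta + O(\e^2),
\]
and the polynomial decay in $\la m\ra^{-2}$ of this first Melnikov-type correction follows from the smoothing properties of $\cH^{(0)}$ (namely that $q^{\rm fg}$ is supported on $\cS_0\subset \Z\times\{0\}$, so the coefficient of $|a_\jj|^2$ decays as $\la m\ra^{-2}$ for $m$ far from $\cS_0$ via momentum conservation in $x$). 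A similar two-dimensional decay argument produces $\Theta_{m,n}/(\la m\ra^2+\la n\ra^2)$ for the off-diagonal corrections, and Theorem \ref{thm:reducibility}(5) supplies the operator bound needed to make this rigorous.

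For $\jj=(\tm_i,n)\in\sS$ the picture is genuinely different: the averaged quadratic form couples the pair $a_{(\tm_i,n)}, \bar a_{(\tm_i,-n)}$ through the tangential modes, via monomials like $q^{\rm fg}_{\tm_j} \bar q^{\rm fg}_{\tm_j}\, a_{(\tm_i,n)}\bar a_{(\tm_i,n)}$ and $\bar q^{\rm fg}_{\tm_j}\bar q^{\rm fg}_{\tm_j}\,a_{(\tm_i,n)}a_{(\tm_i,-n)}$ allowed by momentum conservation. The resulting $2\tk \times 2\tk$ block (indexed by $i$ and by the sign of $n$) is essentially independent of $n$ and of order $\e$ — hence after adding the diagonal $n^2\,\mathrm{Id}$ and computing eigenvalues, the leading $O(\e)$ shift $\mu_i(\lambda)$ is determined by a characteristic polynomial that, to leading order in $n^{-2}$, reduces precisely to $P(t,\lambda)$. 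The two terms in $P$ mirror the two types of couplings: diagonal (action-dependent shift) vs.~cross-term coupling to $-n$.

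For $\jj=(m,0)$ with $m\notin\cS_0$ the frequency is unchanged by the reducibility at first order (the quadratic part in the $n=0$ sector is already that of the 1D integrable model), so $\Omega_\jj$ equals the 1D Birkhoff frequency $\alpha_m^{\rm nls1d}(I(\lambda))$, and the asymptotics $m^2+\varpi_m/\la m\ra$ come directly from \eqref{freq.bc} in Theorem \ref{thm:dnls}(iii). Finally, the irreducibility of $P(t,\lambda)$ over $\Q(\lambda)[t]$ can be verified by specialization (e.g., setting all but one $\lambda_i$ equal to $0$ produces a polynomial whose Newton polygon forces irreducibility), a step that is essential to propagate the genericity/Melnikov conditions that give positive measure of $\cC^{(0)}$. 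The main technical obstacle I anticipate is ensuring the decay rate $\la m\ra^{-2}$ (and not merely $\la m\ra^{-1}$) in \eqref{as.omega}: this requires exploiting not only momentum conservation but also the cancellation between the various quartic contributions to $\cH^{(0)}$, together with the fact that $q^{\rm fg}$ is the image under the 1D Birkhoff map of a finitely-supported sequence (so its Fourier decay is essentially analytic, and only the algebraic loss comes from the small divisors).
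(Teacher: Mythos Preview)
The paper does not give a self-contained proof of this theorem: immediately after the statement it simply records that Theorems \ref{thm:reducibility} and \ref{thm:reducibility4} follow from Theorems~5.1 and~5.3 of \cite{Maspero-Procesi}, together with the observation that the restriction to $\Z_N^2$ with $N>\max_i|\tm_i|$ kills the additional set of ``circle'' resonances $\sC$ appearing in \cite{Maspero-Procesi} (this is why the case splitting in \eqref{as.omega} is so short here). Your outline is therefore not really competing with a proof in the paper but with the argument of \cite{Maspero-Procesi}, and at the structural level it matches that argument: blockwise diagonalisation of the averaged quadratic form, finite blocks on $\sS$ whose characteristic polynomial yields $P(t,\lambda)$, the $n=0$ line handled directly by Theorem~\ref{thm:dnls}(iii), and a KAM tail of size $O(\e^2)$.

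The genuine gap in your sketch is the mechanism for the $\la m\ra^{-2}$ decay, which you correctly flag as the main obstacle but then misattribute. Momentum conservation in $x$ together with analytic decay of $q^{\rm fg}$ only tells you that the off-diagonal matrix entries of $\cH^{(0)}$ decay rapidly in the \emph{distance from the diagonal}; it gives no decay of the diagonal correction itself in $|m|$. Second-order perturbation theory with these ingredients produces a correction of the schematic form $\sum_{m'}|c_{m,m'}|^2/(m^2-m'^2)$, which is a priori only $O(\e^2\la m\ra^{-1})$, not $O(\e^2\la m\ra^{-2})$. What is actually used in \cite{Maspero-Procesi} is a pseudodifferential descent (Egorov-type) argument: one views the linearised operator at fixed $n$ as $L=\im\omega\cdot\partial_\theta-\partial_{xx}+n^2+h(x,\theta)$ and conjugates by $e^{f(x,\theta)\partial_x^{-1}}$ with $f=-\tfrac12\partial_x^{-1}h$, which removes the order-$0$ symbol and shows $b_0=0$; a second conjugation by $e^{g\partial_x^{-2}}$ removes the order-$(-1)$ symbol and shows $b_1=0$. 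The vanishing $b_0=0$ is exactly what Remark~\ref{rem:asym} records (the would-be constant is $\|q^{\rm fg}\|_{L^2}^2$, already subtracted as $M(u)^2$), but $b_1=0$ requires this additional normal-form step and is not a consequence of the cancellations you list. Without it your argument would only yield $\Theta_m/\la m\ra$, which is insufficient for the later Melnikov estimates in Appendix~\ref{app:mes.m}.
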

Theorems \ref{thm:reducibility} and \ref{thm:reducibility4} follow from Theorems 5.1 and 5.3 of \cite{Maspero-Procesi}, together with the observation that the set $\sC$ defined in Definition 2.3 of \cite{Maspero-Procesi} satisfies $\sC \cap \Z_N^2 = \emptyset$ if $N>\max_i |\tm_i|$.

We conclude this section with a series of remarks.

\begin{remark}
\label{rem:mu}\label{rmk:mus}
Notice that the $\{\mu_i(\lambda)\}_{1 \leq i \leq \tk}$ depend on the number $\tk$ of tangential sites but \emph{ not on the $\{\tm_i\}_{1 \leq i \leq \td}$}. 
\end{remark}

\begin{remark}
\label{rem:asym}
The asymptotic  expansion \eqref{as.omega} of the normal frequencies does not 
contain any constant term. The reason is that we canceled such a term when we 
subtracted the quantity $M(u)^2$ from the Hamiltonian at the very beginning (see the footnote in Section \ref{sec:FourierPhase}). Of 
course if we had not  removed $M(u)^2$, we would have had a constant correction 
to the frequencies, equal to $\norm{q(\omega t, \cdot)}^2_{L^2}$. Since 
$q(\omega t, x)$ is a solution of \eqref{NLS}, it enjoys mass conservation, and 
thus $\norm{q(\omega t, \cdot)}^2_{L^2} = \norm{q(0, \cdot)}^2_{L^2}$ is 
independent of time.
\end{remark}

\begin{remark}
	\label{leggi_sel1}
	In the new variables, the {\em selection rules} of Remark \ref{leggi_sel} become (with $\cH$ expanded as in \eqref{h.funct}):
	\begin{align*}
	&\{ \cH, \widetilde \cM\} = 0 \ \ \  \Leftrightarrow   \ \ \  \cH_{\alpha, \beta, \ell} \, (\widetilde \eta(\alpha, \beta) + \eta(\ell)) = 0 \\
	& \{ \cH, \widetilde \cP_x\} = 0  \ \ \ \Leftrightarrow  \ \ \    \cH_{\alpha, \beta, \ell} \, (\widetilde \pi_x(\alpha, \beta) + \pi(\ell)) = 0 \\
	& \{ \cH, \widetilde \cP_y\} = 0  \ \ \ \Leftrightarrow  \ \ \    \cH_{\alpha, \beta, \ell} \, (\pi_y(\alpha, \beta)) = 0 
	\end{align*}
	where $\eta(\ell)$ is defined in \eqref{def.eta},  $\pi_y(\alpha, \beta), \pi(\ell)$ in \eqref{def.pi}, while
	$$
	\widetilde \eta(\alpha, \beta):= \sum_{\jj \in \fZ }(\al_\jj-\bt_\jj) \ , 
	$$
	$$
	\widetilde{\pi}_x(\alpha, \beta):= \sum_{\jj=(m,n) \in \fZ} m(\al_\jj-\bt_\jj). 
	$$
\end{remark}

\section{Elimination of cubic terms}\label{sec:CubicBirkhoff}

If we apply the change $ \cL^{(0)}$ obtained in Theorem \ref{thm:reducibility} to Hamiltonian \eqref{H.2},  we obtain
\begin{equation}
\label{ham.bnf3}
\begin{split}
\cK (\lambda; \yy, \theta, \ba)&:= \cH\circ \cL^{(0)}(\lambda; \yy, \theta, \ba)=\omega \cdot \yy + \sum_{\jj \in \Z_N^2 \setminus \cS_0} \Omega_\jj\, |a_\jj|^2 + \cK^{\1} + \cK^{(2)} +\cK^{(\geq 3)}, \\
 \cK^{(j)}&=\cH^{{(j)}} \circ \cL^{(0)}\quad (j=1, 2), \qquad \cK^{(\geq 3)}=\cH^{(\geq 3)}\circ \cL^{(0)}.
 \end{split}
\end{equation}
As a direct consequence of Lemma \ref{lem:norm.ham} and Theorem \ref{thm:reducibility}, estimates 
 \eqref{lem:norm.ham1} hold also for $\cK^{(j)}$, $j=1,2$ and $\cK^{(\geq 3)}$.

We now perform one step of Birkhoff normal form change of variables which cancels out $\cK^\1$ completely. In order to define such a change of variables we need to impose third order Melnikov conditions, which  hold true on a subset of the set $\cC^{(0)}$ of Theorem \ref{thm:reducibility}. 
\begin{lemma}\label{lemma:cubic:MeasureEstimate} Fix $0 <\e_1<\e_0$ sufficiently small and $\tau_1 >\tau_0$ sufficiently large. There exist constants $\gamma_1>0, \tL_1>\tL_0$ (with $\tL_1$ depending only on $\td$), such that for all $0<\e\le \e_1$ and for an $\tL_1$-generic choice of the set $\Tan$  (in the sense of Definition \ref{Lgenericity}), the set
	\begin{equation}\notag
	\cC^{(1)}:=\left\{\lambda\in \cC^{(0)}:\;	\abs{\omega \cdot \ell + \s_1\Omega_{\jj_1}(\lambda, \e)+ \s_2 \Omega_{\jj_2}(\lambda, \e)+\s_3 \Omega_{\jj_3}(\lambda, \e)} \geq \gamma_1 \frac{\e }{\la \ell \ra^{\tau_1}} \ ,\;\forall (\jj,\ell,\s)\in \fA_3\right\},
		\end{equation}
where $\fA_3$ is introduced in Definition \ref{rem:adm3}, has positive measure. More precisely, $|\cC^{(0)}\setminus \cC^{(1)}  |\lesssim \e_1^{\kappa_1}$  for some constant $\kappa_1>0$ independent of  $\e_1$. 
\end{lemma}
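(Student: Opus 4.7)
The plan is to estimate $|\cC^{(0)} \setminus \cC^{(1)}|$ by the classical route: for each admissible triple $(\jj, \ell, \sigma) \in \fA_3$, bound the measure of the bad set $\{\lambda \in \cC^{(0)} : |\delta(\lambda)| < \gamma_1 \e / \langle \ell \rangle^{\tau_1}\}$, then sum. Here $\delta(\lambda) := \omega(\lambda) \cdot \ell + \sum_{i=1}^3 \sigma_i \Omega_{\jj_i}(\lambda, \e)$ is the third-order small divisor. Using the asymptotics \eqref{as.omega}, I would decompose
$$
\delta(\lambda) = D_{\mathrm{int}}(\jj,\ell,\sigma) + \e\, D_\lambda(\lambda;\jj,\ell,\sigma) + R(\lambda,\e),
$$
where $D_{\mathrm{int}} \in \Z$ collects all integer pieces (coming from $\tm_k^2$ and from the integer part of $\wtOmega_{\jj_i}$), $\e D_\lambda(\lambda) = -\e \lambda \cdot \ell + \e \sum_{\jj_i \in \sS} \sigma_i \mu_{k_i}(\lambda)$ gathers the $O(\e)$ Lipschitz-in-$\lambda$ contributions, and $|R| \leq C\e^2$ is the tail controlled by \eqref{theta.est}. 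If $D_{\mathrm{int}} \neq 0$, then $|\delta| \geq 1 - C\e \geq 1/2$ and no exclusion is needed, so only triples with $D_{\mathrm{int}} = 0$ contribute.

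For such triples, the main step is a uniform lower bound $\mathrm{Lip}(D_\lambda) \geq c_0 > 0$ on $\cC^{(0)}$, independent of $(\jj, \ell, \sigma)$. If $\ell \neq 0$ this follows at once from the linear term $-\lambda \cdot \ell$, whose slope dominates the bounded $\mu_{k_i}$-contribution. If $\ell = 0$, the mass selection rule from Remark \ref{leggi_sel1} together with the parity obstruction $\sigma_1 + \sigma_2 + \sigma_3 \in \{\pm 1, \pm 3\}$ forces either exactly one $\jj_i$ to lie in $\sS$ (so that $D_\lambda = \pm \mu_{k_i}(\lambda)$) or all three to lie in $\sS$ (so that $D_\lambda$ is a nontrivial integer combination of at most three of the roots $\mu_k(\lambda)$). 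In either case, the irreducibility of $P(t, \lambda)$ over $\Q(\lambda)[t]$ asserted in Theorem \ref{thm:reducibility4} prevents $D_\lambda$ from vanishing identically in $\lambda$ (by Galois-theoretic transitivity on the roots), and compactness of $\cC^{(0)} \subset (1/2, 1)^\tk$ upgrades this to a uniform Lipschitz bound. The $\tL_1$-genericity of $\cS_0$ is invoked to rule out finitely many accidental small-$|\ell|$ cancellations between $-\lambda \cdot \ell$ and the $\mu$-contributions.

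Given the non-degeneracy, each admissible triple with $D_{\mathrm{int}} = 0$ contributes a bad set of measure $\lesssim \gamma_1 c_0^{-1} \langle \ell \rangle^{-\tau_1}$, because the factor of $\e$ in the threshold cancels the factor of $\e$ in the Lipschitz slope. At fixed $\ell$, the mass and momentum selection rules fix three scalar combinations of the $\jj_i$, and the resonance $D_{\mathrm{int}} = 0$ fixes a fourth, so only $O(\langle \ell \rangle^C)$ admissible triples survive in the relevant range $|\jj_i| \lesssim \langle \ell \rangle^{\tau_1/2}$ (for much larger $|\jj_i|$ the $R$-remainder alone already exceeds the threshold, so those triples are automatically non-resonant). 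Choosing $\tau_1$ large enough to absorb this polynomial count and to make $\sum_\ell \langle \ell \rangle^{C-\tau_1}$ summable gives total excluded measure $\lesssim \gamma_1$, and taking $\gamma_1 = \e_1^{\kappa_1}$ for a suitable $\kappa_1 \in (0, 1)$ yields the stated bound. I expect the main obstacle to be the $\ell = 0$ case: extracting a quantitative Lipschitz lower bound for a $\Z$-linear combination of the roots $\mu_k(\lambda)$ from the purely algebraic irreducibility of $P(t, \lambda)$ is the most delicate point, and is precisely what restricts the permissible resonances and dictates the choice of $\tL_1$.
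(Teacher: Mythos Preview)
Your overall strategy --- decompose the small divisor as integer part plus $\e\,\tF_{\bj,\ell}^\sigma(\lambda)$ plus an $O(\e^2)$ tail, discard the case $\tK_{\bj,\ell}^\sigma\neq 0$, and then estimate the measure of the sublevel sets of $\tF_{\bj,\ell}^\sigma$ --- is exactly the route taken in Appendix~C of \cite{Maspero-Procesi} (to which the paper defers this lemma) and in the paper's own Appendix~\ref{app:mes.m} for the fourth-order analogue. But your mechanism for the small-$|\ell|$ regime is wrong, and that is a genuine gap. The claim that for $\ell\neq 0$ the slope of $-\lambda\cdot\ell$ ``dominates the bounded $\mu_{k_i}$-contribution'' fails when $|\ell|$ is small: each $\mu_{k_i}$ has Lipschitz norm of order $\tM_0$ (see \eqref{theta.est}), a fixed constant that may well exceed $|\ell|$. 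Likewise, for $\ell=0$, knowing that $D_\lambda$ is not identically zero on a compact set does \emph{not} yield a uniform Lipschitz lower bound --- it yields at best a lower bound on $|D_\lambda|$ away from its zero set, which may intersect $\cC^{(0)}$. So in both cases the P\"oschel-type estimate you invoke (measure $\lesssim \gamma_1/\mathrm{Lip}$) is not available.

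The correct argument (cf.\ Lemma~\ref{lem:cono} and Propositions~\ref{prop:cC}--\ref{prop:Cs}) splits at a threshold $|\ell|\sim\tM_0$. For $|\ell|$ below the threshold there are only finitely many distinct analytic functions $\tF_{\bj,\ell}^\sigma(\lambda)$; a case-by-case algebraic check (using irreducibility of $P(t,\lambda)$ and, in a few cases, the $\tL_1$-genericity of $\cS_0$ via momentum constraints on the $\tm_k$) shows each is not identically zero, and then a sublevel-set estimate for real-analytic functions gives $|\{|\tF|<\gamma\}|\lesssim\gamma^{1/\tt k}$ for some integer $\tt k$. It is this fractional power, not your linear-in-$\gamma_1$ estimate, that produces the exponent $\kappa_1<1$ in the statement. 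For $|\ell|$ above the threshold your Lipschitz argument is now legitimate, but the summation over admissible triples is handled not by your direct counting but by an induction on the number of nonzero correction terms $\Theta_m,\Theta_{m,n},\varpi_m$, as in Proposition~\ref{prop:Cs}. Finally, ``Galois transitivity'' alone does not exclude $\Z$-linear relations among the roots $\mu_i(\lambda)$ (transitivity only forbids a single root from lying in $\Q(\lambda)$); the nonvanishing of sums like $\mu_{i_1}\pm\mu_{i_2}\pm\mu_{i_3}$ requires the more hands-on polynomial manipulations you see in the appendix.
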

This lemma is proven in Appendix C of  \cite{Maspero-Procesi}.

The main result of this section is the following theorem.
\begin{theorem}
\label{thm:3b}
Assume the same hypotheses and use the same notation as in Lemma \ref{lemma:cubic:MeasureEstimate}.
Consider the constants $\tL_1$, $\gamma_1$, $\tau_1$ given by Lemma 
\ref{lemma:cubic:MeasureEstimate}, the associated set $\cC^{(1)}$, and the constants $\eps_0$, $\rho_0$ and $r_0$ given in Theorem \ref{thm:reducibility}. There 
exist $0<\e_1\leq \e_0$, $0<\rho_1\leq \rho_0/64$, $0<r_1\leq r_0$ such that the following holds true for all $0<\e\le \e_1$. 
For each $\lambda \in \cC^{(1)}$ and all $0<r \leq r_1$, $0<\rho \leq \rho_1$, there exists a symplectic change of variables $\cL^{(1)}$, that is well defined and majorant analytic from $D(\rho/2, r/2) \to D(\rho,r)$ such that applied to Hamiltonian $\cK$ in \eqref{ham.bnf3} leads to
	\begin{equation}\label{def:HamAfterCubic}
	\cQ:=\cK \circ \cL^{(1)}(\lambda; \yy, \theta, \ba) =\omega \cdot \yy +  \sum_{\jj\in \Z_N^2\setminus \Tan}\Omega_\jj(\lambda, \e) |a_\jj|^2  +\cQ^{( 2)} +\cQ^{(\geq 3)}\ , 
	\end{equation}
	where 
\begin{itemize}
\item[(i)] the map $\cL^{(1)}$ is the time-1 flow of a cubic hamiltonian $\chi^\1$ such that 
$|{\chi}^\1|_{\rho/2,r/2}^{\cC^\1} \lesssim \frac{r }{\sqrt{\e}}$.
\item[(ii)]  $\cQ^{( 2)} $ is of degree 2 (in the sense of Definition \ref{def:degree}) and is given by
	\begin{equation}\label{zebra2}
	\cQ^{(2)}=\cK^{(2)} +\frac12 \{ \cK^{(1)}, \chi^{\1}\},
	\end{equation}
	and satisfies $|\cQ^{(2)}|_{\rho/2,r/2}\lesssim {r^2}$ .
\item[(iii)] $\cQ^{( \geq 3)} $ is of degree at least 3 and satisfies

\begin{equation}\label{zebra3}
|\cQ^{(\geq 3)}|_{\rho/2, r/2}^{\cC^{(1)}} \lesssim \frac{r^3}{\sqrt\varepsilon}.
\end{equation}
	\item [(iv)] $\cL^{(1)}$ satisfies $\wt \cM \circ \cL^{(1)} = \wt \cM$ and $\wt \cP \circ \cL^{(1)} = \wt \cP$.
	\item[(v)] $\cL^{(1)}$ maps $D(\rho/2, r/2) \cap h^1  \to D(\rho, r)\cap h^1$, and if we denote $( \widetilde \cY, \widetilde\theta, \widetilde {\bf a})=\cL^{(1)}( \cY, \theta,  {\bf a})$, then 
\begin{equation}\label{cubic difference}
 \left\| \widetilde{\bf a}-\ba\right\|_{\ell^1}\lesssim \|\ba\|_{\ell^1}^2.
\end{equation}
\end{itemize}
 \end{theorem}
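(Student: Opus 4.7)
\medskip
\noindent\textbf{Proof plan for Theorem \ref{thm:3b}.}
The natural approach is a single Lie-series Birkhoff step designed to kill the cubic piece $\cK^{(1)}$. Let $\cN_0:=\omega\cdot\yy+\sum_{\jj}\Omega_\jj|a_\jj|^2$ denote the reduced quadratic part. I would first write the homological equation
\[
\{\cN_0,\chi^{(1)}\}+\cK^{(1)}=0,
\]
and solve it monomial by monomial. Since $\cK^{(1)}$ has degree $1$ in the sense of Definition \ref{def:degree} (so $2|l|+|\alpha|+|\beta|=3$), and, in view of the expression \eqref{def of H1} and the linear-in-$\ba$ structure of $\cL^{(0)}$, its admissible monomials carry $|\alpha|+|\beta|=3$ and $l=0$, only triplets $(\jj_1,\jj_2,\jj_3,\ell,\sigma)\in \fA_3$ arise. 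The action of $\{\cN_0,\cdot\}$ on such a monomial is multiplication by the divisor $\im(\omega\cdot\ell+\sigma_1\Omega_{\jj_1}+\sigma_2\Omega_{\jj_2}+\sigma_3\Omega_{\jj_3})$, which is bounded below by $\gamma_1\varepsilon/\langle\ell\rangle^{\tau_1}$ on the Cantor set $\cC^{(1)}$ by Lemma \ref{lemma:cubic:MeasureEstimate}. Hence the solution $\chi^{(1)}$ exists and is unique up to resonant (i.e.\ kernel) monomials; such kernel elements are absent here because $\cK^{(1)}$ is of odd degree in $(a,\bar a)$.

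I would then estimate $\chi^{(1)}$ in the majorant norm. From Lemma \ref{lem:norm.ham} together with \eqref{ham.bnf3} one has $|\cK^{(1)}|_{\rho_0,r}^{\cC^{(1)}}\lesssim \sqrt\varepsilon\,r$. The small-divisor loss contributes a factor $\varepsilon^{-1}$ and a weight $\langle\ell\rangle^{\tau_1}$ in the Fourier coefficients; the latter is absorbed by a standard analytic-loss argument, passing from $\rho$ to $\rho/2$ at the cost of a constant (this is where one fixes $\rho_1$ small enough depending on $\tau_1$). The net estimate is
\[
|\chi^{(1)}|_{\rho/2,r}^{\cC^{(1)}}\lesssim \frac{\sqrt\varepsilon\, r}{\varepsilon}=\frac{r}{\sqrt\varepsilon},
\]
matching (i). Admissibility with respect to $\wt\cM,\wt\cP$ is inherited by $\chi^{(1)}$ from $\cK^{(1)}$ since the homological equation is diagonal on admissible monomials; consequently the time-$1$ flow $\cL^{(1)}:=\Phi^1_{\chi^{(1)}}$ preserves $\wt\cM$ and $\wt\cP$, giving (iv). The flow's well-posedness $D(\rho/2,r/2)\to D(\rho,r)$ and majorant analyticity follow from a standard Cauchy/fixed-point argument (shrinking $r_1$ so that $r_1/\sqrt\varepsilon\ll 1$) using the bound on $\bnorm{X_{\chi^{(1)}}}_r$.

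Next, expanding the conjugated Hamiltonian via the Lie series
\[
\cK\circ\cL^{(1)}=\cK+\{\cK,\chi^{(1)}\}+\tfrac12\{\{\cK,\chi^{(1)}\},\chi^{(1)}\}+\cdots,
\]
and using $\{\cN_0,\chi^{(1)}\}=-\cK^{(1)}$, the cubic part cancels and the quartic (degree-$2$) contributions collect into
\[
\cK^{(2)}+\{\cK^{(1)},\chi^{(1)}\}+\tfrac12\{\{\cN_0,\chi^{(1)}\},\chi^{(1)}\}=\cK^{(2)}+\tfrac12\{\cK^{(1)},\chi^{(1)}\},
\]
which is exactly $\cQ^{(2)}$ in \eqref{zebra2}; the bound $|\cQ^{(2)}|_{\rho/2,r/2}\lesssim r^2$ follows from $|\cK^{(2)}|\lesssim r^2$ and the standard Poisson-bracket estimate $|\{F,G\}|_{\rho/2,r/2}\lesssim |F|_{\rho,r}|G|_{\rho,r}/(\text{loss})$, giving $\sqrt\varepsilon r\cdot r/\sqrt\varepsilon=r^2$. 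All remaining terms of the Lie series, together with $\cK^{(\geq 3)}$ and all iterated brackets of $\chi^{(1)}$ with itself acting on $\cK^{(1)},\cK^{(2)},\cK^{(\geq 3)}$, have degree $\geq 3$. Summing a geometric series in $r/\sqrt\varepsilon$ (convergent for $r\leq r_1$ small enough) yields the bound \eqref{zebra3}. Finally, property (v) follows at once from the fact that $X_{\chi^{(1)}}^{(a)}$ is quadratic in $\ba$, so the flow satisfies $\|\wt{\ba}-\ba\|_{\ell^1}\lesssim \|\ba\|_{\ell^1}^2$ by integrating the vector field along the flow and bootstrapping.

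The main obstacle is the quantitative bookkeeping of small divisors: one must verify that the $\varepsilon^{-1}$ lost from the third-order Melnikov conditions is exactly compensated by the $\sqrt\varepsilon$ smallness of $\cK^{(1)}$, so that $\chi^{(1)}$ and its flow are genuinely small on $D(\rho/2,r/2)$ and iterated brackets $\ad_{\chi^{(1)}}^k$ produce a convergent series. All other steps are standard once the divisor analysis in Lemma \ref{lemma:cubic:MeasureEstimate} is in hand.
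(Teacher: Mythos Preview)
Your approach is essentially identical to the paper's: solve the homological equation $\{\widehat\cN,\chi^{(1)}\}+\cK^{(1)}=0$ monomialwise using the third Melnikov bounds on $\cC^{(1)}$, then expand $\cK\circ\cL^{(1)}$ by Lie series and collect terms by degree; the estimates for (i)--(iii) and the argument for (iv) match the paper's proof almost verbatim.

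The one point you do not address is the $h^1$-preservation half of (v). Knowing that $X_{\chi^{(1)}}^{(a)}$ is quadratic in $\ba$ yields the $\ell^1$ estimate \eqref{cubic difference}, but it does not by itself show that the flow maps $h^1$ to $h^1$: the vector field is only controlled in the $\ell^1$-based majorant norm, and $\chi^{(1)}$ carries an $\varepsilon^{-1}$ small-divisor loss that could in principle interact badly with the unbounded multiplier $|\jj|^2$. The paper closes this by noting that $\widehat\cN=\omega\cdot\yy+\sum_\jj\Omega_\jj|a_\jj|^2$ is equivalent to the square of the $h^1$ norm and then computing
\[
\widehat\cN\circ\cL^{(1)}=\widehat\cN-\sum_{k\geq 0}\frac{\ad(\chi^{(1)})^k[\cK^{(1)}]}{(k+1)!}=\widehat\cN+O(\sqrt\varepsilon\,r^3),
\]
using the homological equation to trade the unbounded $\widehat\cN$ for the bounded $\cK^{(1)}$ inside the Lie expansion. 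This is the only missing ingredient in your sketch.
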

To prove this theorem, we state the following lemma, which is proved in \cite{Maspero-Procesi}.


\begin{lemma}\label{lemma:Estimates}
 For every $ \rho,r>0$ the following holds true:
\begin{itemize}
	\item[(i)] Let $h,\, f \in \cA_{\rho,r}^\cO$. For any $0<\rho' <\rho$ and  $0<r' < r$, one has
		\[
		\left|\{f,g\}\right|_{\rho',r'}^\mathcal{O}\leq \upsilon^{-1}C
		\left|f\right|_{\rho,r}^\mathcal{O} \left|g\right|_{\rho,r}^\mathcal{O}.
		\]
		where   $\upsilon := \min \left( 1-\frac{r'}{r}, \rho-\rho'\right)$.
	 If $\upsilon^{-1} |f|_{\rho,r}^\cO<\zeta$ sufficiently small then the (time-1 flow of the) Hamiltonian vector field $X_f$ defines a close to identity canonical change of variables
	$\cT_f$ such that 
	$$
	 |h\circ\cT_f|_{\rho', r'}^\cO \leq (1+C\zeta)|h|_{\rho,r}^\cO \ , \qquad\text{for all }\, 0<\rho' <\rho \ , \ \ 0<r' < r \ . 
	$$
	\item[(ii)] Let  $f,g \in \cA_{\rho,r}^\cO$  of minimal  degree respectively $\td_f$ and   $\td_g$ (see Definition \ref{def:degree}) and define the function 
	\begin{equation}\label{taylor}
	\re{\ti}(f; h)=\sum_{l=\ti}^\infty  \frac{(\ad f)^l}{l!} h\,,\quad \ad(f)h:= \{h,f\} \ . 
	\end{equation} 
Then $\re{\ti}(f; g)$ is of minimal  degree $\td_f \ti   +\td_g$ 
	and we have the bound 
	$$
	\abs{\re{\ti}(f; h)}_{\rho',r'}^\cO \leq C(\rho) \upsilon^{-\ti} \left(|f|_{\rho,r}^\cO\right)^\ti \,  |g|_{\rho,r}^\cO \ ,  \qquad \forall 0<\rho' <\rho \ , \ \ 0<r' < r  \ .
	$$
\end{itemize}
\end{lemma}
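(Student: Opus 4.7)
The plan is to derive both statements from Cauchy-type estimates in the majorant-analytic framework, tracking carefully how each partial derivative contributes a factor of $\upsilon^{-1}$ upon shrinking the polydisc $D(\rho,r)$ to $D(\rho',r')$. The proof is essentially routine bookkeeping, and the main task is keeping the weights in the vector-field norm $\bnorm{\cdot}_r$ consistent across derivatives in the different variables $\theta$, $\yy$, $a$, $\bar a$; no single step is a genuine analytic obstacle.

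For part (i), I would use the identity $X_{\{f,g\}} = [X_g, X_f]$, so that each component of $X_{\{f,g\}}$ is a sum of products of a partial derivative of one component of $X_f$ with one component of $X_g$ (and vice versa). Passing to the majorant level (taking absolute values of Taylor–Fourier coefficients commutes with differentiation) and applying Cauchy inequalities on the shrunken polydisc costs a factor $(\rho-\rho')^{-1}$ for each $\partial_\theta$, a factor $r^{-2}(1-r'/r)^{-1}$ for each $\partial_{\yy}$, and a factor $r^{-1}(1-r'/r)^{-1}$ for each $\partial_{a_\jj}$ or $\partial_{\bar a_\jj}$. Combined with the weights $1,\, r^{-2},\, r^{-1}$ built into $\bnorm{\cdot}_r$, a direct but careful check yields $|\{f,g\}|_{\rho',r'} \leq C\upsilon^{-1}|f|_{\rho,r}|g|_{\rho,r}$; the Lipschitz-in-$\lambda$ extension to the $|\cdot|_{\rho,r}^\cO$ norm is automatic from the bilinearity of the Poisson bracket.

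For the flow part of (i), I would expand $h\circ \cT_f$ as the Lie series $\sum_{k\geq 0}\frac{1}{k!}(\ad f)^k h$. Iterating the Poisson-bracket bound across $k$ successive shrinkings of widths $\upsilon/k$ each yields $|(\ad f)^k h|_{\rho',r'}^\cO \leq (Ck)^k \upsilon^{-k}(|f|_{\rho,r}^\cO)^k |h|_{\rho,r}^\cO$. Stirling's inequality $k! \geq k^k e^{-k}$ then converts this into a geometric series of ratio $Ce\,\upsilon^{-1}|f|_{\rho,r}^\cO$, which is bounded by $Ce\zeta < 1$ under the smallness hypothesis. Summing the tail starting at $k=1$ gives $|h\circ \cT_f - h|_{\rho',r'}^\cO \leq C'\zeta\, |h|_{\rho,r}^\cO$, and hence the announced bound.

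For part (ii), the quantitative estimate on $\re{\ti}(f;g)$ follows from the same iterated Poisson-bracket argument, simply starting the series at $k=\ti$ instead of $k=0$: the factor $(|f|_{\rho,r}^\cO)^\ti$ comes from the leading term and the remainder is controlled by the same convergent geometric series, producing the $\upsilon^{-\ti}$ loss. The minimal-degree claim is an elementary monomial computation: using Definition \ref{def:degree}, for $A = e^{\im\ell_A\cdot\theta}\yy^{l_A}\fm_{\al_A,\bt_A}$ of degree $d_A$ and $B$ similarly of degree $d_B$, one checks that each contribution to $\{A,B\}$ (e.g.\ $\partial_{\bar a_\jj}A \cdot \partial_{a_\jj}B$) produces a monomial of total degree exactly $d_A + d_B$, because the two derivatives each decrease degree by $1$ while the product adds back $+2$. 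Hence $\{f,g\}$ has minimal degree $\td_f + \td_g$, and iterating $\ti$ times shows that $(\ad f)^\ti g$ has minimal degree $\td_f\,\ti + \td_g$, as claimed.
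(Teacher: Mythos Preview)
Your sketch is correct and follows the standard majorant--Cauchy route. The paper itself does not prove this lemma: it simply states it and refers to \cite{Maspero-Procesi} for the proof, so there is no original argument to compare against beyond noting that your outline is precisely the classical one carried out there. One tiny imprecision: in the degree count you justify $\deg\{A,B\}=d_A+d_B$ via the $\partial_{\bar a}\cdot\partial_a$ terms (each derivative drops degree by $1$, the product adds $2$), but for the $\partial_\yy\cdot\partial_\theta$ terms the bookkeeping is $-2$, $0$, and $+2$ respectively---still summing to $d_A+d_B$, so the conclusion stands.
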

 
 \begin{proof}[Proof of Theorem \ref{thm:3b}]
We look for $\cL^{(1)}$ as the time-one-flow of a Hamiltonian $\chi^\1$.  
With $\widehat\cN := \omega \cdot \yy +  \sum_{\jj\in \Z_N^2\setminus \Tan}\Omega_\jj(\lambda, \e) |a_\jj|^2$ and $\displaystyle{\re{j}(\chi^{(1)}; \,\cdot )=\sum_{k \geq j} \frac{{\rm ad}(\chi^\1)^{k-1}[ \{ \cdot,  \chi^\1 \}]}{k!}}$, 	we have 
\begin{align}
\label{blu.10}
\cK\circ \cL^{(1)}    = & \  \widehat\cN +  \{\widehat\cN ,  \chi^\1  \} + \cK^\1  \\
\label{blu.20}
& + \re2(\chi^\1; \, \widehat\cN ) + \{ \cK^\1, \chi^\1\}+ \re2(\chi^\1; \, \cK^\1 )   \\
\label{blu.30}
& + \cK^{\2} +\re1(\chi^\1; \,  \cK^\2 ) + \cK^{(\geq 3)} \circ \cL^{(1)}
\end{align}	
	We choose $\chi^\1$  to solve the homological equation $  \{ \widehat\cN , \chi^\1 \} + \cK^\1 = 0$. Thus we  set
	$$
\cK^\1=	\sum_{\ell,\bj,\vec{\s}\in\; \fA_3} K^{\vec{\sigma}}_{\ell,\bj}(\lambda, \e)\,   e^{\im \theta\cdot \ell}a^{\sigma_1}_{\jj_1}a^{\sigma_2}_{\jj_2}a^{\sigma_3}_{\jj_3}
\,,
\qquad 
\chi^\1=	\sum_{\ell,\bj,\vec{\s}\in\; \fA_3} \chi^{\vec{\sigma}}_{\ell,\bj}(\lambda, \e)\,  e^{\im \theta\cdot \ell}a^{\sigma_1}_{\jj_1}a^{\sigma_2}_{\jj_2}a^{\sigma_3}_{\jj_3}
	$$
	with 
	$$
	\chi^{\vec{\sigma}}_{\ell,\bj}(\lambda, \e) :=
	 \frac{\im K^{\vec{\sigma}}_{\ell,\bj}(\lambda, \e)}{\omega \cdot \ell + \s_1\Omega_{\jj_1}(\lambda, \e)+ \s_2 \Omega_{\jj_2}(\lambda, \e)+ \s_3 \Omega_{\jj_3}(\lambda, \e)} \ .
	$$
Since $\lambda \in \cC^\1$, we have
	$$
|{\chi}^\1|_{\frac{\rho}{2},r}^{\cC^\1} \lesssim \frac{r }{\sqrt{\e}},
	$$
since the terms $q_m^{\rm{fg}}$ appearing in $\cH^\1$ (and hence $\cK^\1$) are $O(\sqrt\e)$. 
	We come to the terms of line \eqref{blu.20}. First we use the homological equation  $  \{ \widehat\cN , \chi^\1 \} + \cK^\1 = 0$ to get that
	\begin{align*}
\re2\left( \chi^\1; \widehat\cN \right) & 
 = \sum_{k \geq 2} \frac{{\rm ad}(\chi^\1)^{k-1}[ \{  \widehat\cN, \chi^\1 \}]}{k!} = -\frac12\{ \cK^\1, \chi^\1\}- \sum_{k \geq 2} \frac{{\rm ad}(\chi^\1)^{k}[\cK^\1 ]}{(k+1)!}.  
\end{align*}
Therefore, we set $\cQ^\2$ as in \eqref{zebra2} and $$\cQ^{(\geq 3)}=\re2(\chi^\1; \, \cK^\1 ) +\re1(\chi^\1; \,  \cK^\2 ) + \cK^{(\geq 3)} \circ \cL^{(1)}- \sum_{k \geq 2} \frac{{\rm ad}(\chi^\1)^{k}[\cK^\1 ]}{(k+1)!}.$$
By Lemma \ref{lemma:Estimates},  $\cQ^{(\geq 3)}$ has degree at least 3 and fulfills the quantitative estimate \eqref{zebra3}.
To prove $(iv)$, we use the fact that $\{ \wt\cM, \chi^\1\} = \{ \wt \cP, \chi^\1\} = 0$ follows since $\cK^\1$ commutes with $\wt\cM$ and $\wt \cP$, hence its monomials fulfill the selection rules of Remark \ref{leggi_sel1}. 
	By the explicit formula for $\chi^\1$ above, it follows that the same selection rules hold for $\chi^\1$, and consequently $\cL^{(1)}$ preserves $\wt \cM$ and $\wt \cP$. 

\medskip 

It remains to show the mapping properties of the operator $\cL^{(1)}$. First we show that it maps $D(\rho/2, r/2) \to D(\rho, r)$. 
Let us denote by
 $( \widetilde \cY, \widetilde\theta, \widetilde {\bf a})=\cL^{(1)}( \cY, \theta, {\bf a})$, then $( \widetilde \cY, \widetilde\theta, \widetilde {\bf a})=( \widetilde \cY(s), \widetilde\theta(s), \widetilde {\bf a}(s))\big|_{s=1}$ where $( \widetilde \cY(s),\widetilde\theta(s), \widetilde {\bf a}(s))$ is the Hamiltonian flow generated by $\chi^{(1)}$ at time $0\leq s\leq 1$. Using the identity
$$
( \widetilde \cY(t),\widetilde\theta(t), \widetilde {\bf a}(t))=(  \cY, \theta, {\bf a})+\int_0^t X_{\chi^{(1)}}\left( \widetilde \cY(s), \widetilde\theta(s), \widetilde {\bf a}(s)\right) \di s
$$
where $X_{\chi^{(1)}}$ is the Hamiltonian vector field associated with $\chi^{(1)}$ above, and a standard continuity (bootstrap) argument, we conclude that $( \widetilde \cY, \widetilde\theta,\widetilde {\bf a}) \in D(\rho, r)$. Similarly, one also concludes estimate \eqref{cubic difference}. Finally, to prove that $\cL^{(1)}$ maps $D(\rho/2, r/2) \cap h^1 \to h^1$, we note that $\widehat \cN$ is equivalent to the square of the $h^1$ norm, and 
$$
\widehat \cN \circ \cL^{(1)}=   \widehat\cN +  \re1(\chi^\1; \, \widehat\cN )=\widehat\cN - \sum_{k \geq 0} \frac{{\rm ad}(\chi^\1)^{k}[\cK^\1 ]}{(k+1)!}=\widehat \cN +O(\sqrt \e r^3),
$$
and this completes the proof.

\end{proof}

\section{Analysis of the quartic part of the 
Hamiltonian}\label{sec:QuarticBirkhoff}

At this stage, we are left with the Hamiltonian $\cQ$ given in \eqref{def:HamAfterCubic}. The aim of this section is to eliminate non-resonant terms from $\cQ^{(2)}$.
First note that  $\cQ^\2$ contains monomials which have one of the two following 
forms
$$
e^{\im \theta \cdot \ell} \, a_{\jj_1}^{\sigma_1} \, a_{\jj_2}^{\sigma_2} \, 
a_{\jj_3}^{\sigma_3}\, a_{\jj_4}^{\sigma_4} \quad \mbox{ or } \quad e^{\im \theta 
\cdot \ell} \,\yy^l  a_{\jj_1}^{\sigma_1} \, a_{\jj_2}^{\sigma_2} \ \ \ \mbox{with} \ \ |l| = 1.
$$
In order to cancel out the terms quadratic in $a$ by a Birkhoff Normal form procedure, we only need the {\sl second Melnikov conditions} imposed in \eqref{2.mc}. In order to cancel out the quartic tems in $a$ we need {\sl fourth Melnikov conditions}, namely to control expressions of the form

\begin{equation}
\label{4m}
\omega(\lambda) \cdot \ell + 
\sigma_1 \Omega_{\jj_1}(\lambda, \e) + 
\sigma_2 \Omega_{\jj_2}(\lambda, \e) +
\sigma_3 \Omega_{\jj_3}(\lambda, \e) +
\sigma_4 \Omega_{\jj_4}(\lambda, \e) \,,\quad \s_i=\pm 1.
\end{equation}
We start by defining the following set $\fR_4\subset\fA_4$ (see Definition \ref{rem:adm3}),
\begin{align}
\label{def:R4}
\fR_4 := \Big\{(\bj, \ell, \sigma) \colon
&  \ell = 0 \mbox{ and } \jj_1, \jj_2, \jj_3, \jj_4\notin \sS  \mbox{ form a 
	rectangle}\\
& \ell = 0 \mbox{ and } \jj_1 , \jj_2 \notin\sS , \jj_3, \jj_4 \in \sS  \mbox{ 
	form a  horizontal rectangle (even degenerate)}\notag\\
&\ell \neq 0, \ \jj_1, \jj_2, \jj_3 \in \sS, \ \jj_4 \not\in \sS \mbox{ and } 
|m_4|< M_0, \mbox{ where } M_0 \mbox{ is a universal constant} \notag\\
&\ell =  0, \ \jj_1, \jj_2, \jj_3, \jj_4 \in \sS \mbox{ form a  horizontal 
	trapezoid}
\Big\} \notag
\end{align}
where $\sS$ is the set defined in \eqref{def:SetZ}. Here a trapezoid (or a rectangle) is said to be {\em horizontal} if two 
sides are parallel to the $x$-axis.
\begin{figure}[ht]\centering
	\vskip-10pt\begin{minipage}[c]{5cm}
		\hskip-122pt	{\centering
			\includegraphics[width=12cm]{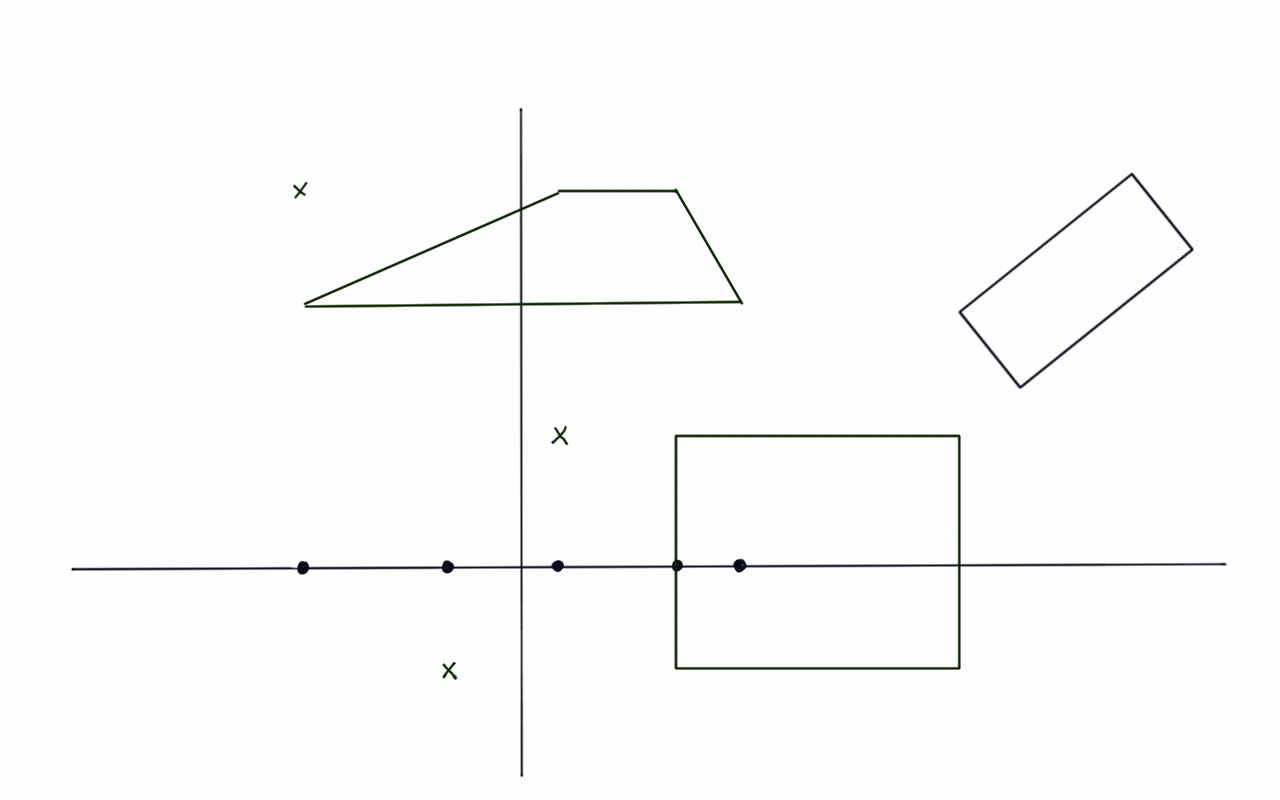}
		}
	\end{minipage}
	\caption{The black dots, are the points in $\cS_0$. The two rectangles 
		and the trapezoid correspond to cases 1,2,4 in $\fR_4$. In order to represent 
		case 3. we have highlighted three points in $\cS$. To each such triple  we may 
		associate at most one $\ell\neq 0$ and  one $\jj_4\in \fZ$, which form a 
		resonance of type 3.}
\end{figure}

\begin{proposition}
	\label{hopeful thinking}
	Fix  $0<\e_2<\e_1$ sufficiently small and $\tau_2>\tau_1$ sufficiently large. There exist positive  $\gamma_2>0, \tL_2\ge \tL_1$ (with $\tL_2$ depending only on $\td$), such that for all $0<\e\le \e_2$ and for an $\tL_2$-generic choice of the set $\Tan$  (in the sense of Definition \ref{Lgenericity}), the set
	\begin{equation}\notag
\begin{split}
	\cC^{(2)}:=\Big\{\lambda\in \cC^{(1)} :&\;	\abs{\omega \cdot \ell + \s_1\Omega_{\jj_1}(\lambda, \e)+ \s_2 \Omega_{\jj_2}(\lambda, \e)+\s_3 \Omega_{\jj_3}(\lambda, \e)+\s_4\Omega_{\jj_4}(\lambda, \e)} \geq  \frac{\gamma_2\e}{\la \ell \ra^{\tau_2}} \ ,\\
	&\;\forall (\jj,\ell,\s)\in \fA_4 \setminus\fR_4\Big\}
	\end{split}
	\end{equation}
	has positive measure and $\abs{\cC^{(1)} \setminus \cC^{(2)}} \lesssim \e_2^{\kappa_2}$ for some $\kappa_2>0$ independent of $\e_2$.
\end{proposition}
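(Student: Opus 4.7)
The strategy is the standard KAM measure estimate. For each admissible quadruple $(\bj,\ell,\s)\in\fA_4\setminus\fR_4$ I would show that the Lipschitz seminorm (in $\lambda$) of the small divisor
$$\delta_{\bj,\ell,\s}(\lambda):=\omega(\lambda)\cdot\ell+\sum_{i=1}^4 \s_i\Omega_{\jj_i}(\lambda,\e)$$
is bounded below by $c\,\e$ for some $c>0$, so that the bad set $\{\lambda\in\cC^{(1)}\colon|\delta_{\bj,\ell,\s}(\lambda)|<\gamma_2\,\e\,\jap{\ell}^{-\tau_2}\}$ has measure $\lesssim\gamma_2\,\jap{\ell}^{-\tau_2}$. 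The total bad measure is then controlled by summing over admissible non-resonant tuples, using the mass--momentum selection rules of Remark \ref{leggi_sel1} to limit the number of non-vacuous contributions.

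First I would insert the asymptotics \eqref{as.omega} and the identity $\omega_i(\lambda)=\tm_i^2-\e\lambda_i$ to rewrite
$$\delta_{\bj,\ell,\s}(\lambda)=\kappa_0(\bj,\ell,\s)-\e\,\ell\cdot\lambda+\e\sum_{i\,:\,\jj_i\in\sS}\s_i\mu_{k_i}(\lambda)+O(\e^2),$$
where $\kappa_0\in\Z$ is the integer combination $\sum_i\tm_i^2\ell_i+\sum_{\jj_i\in\fZ}\s_i|\jj_i|^2+\sum_{\jj_i\in\sS}\s_i n_i^2$ and $k_i$ is the tangential index of $\jj_i\in\sS$. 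Then I would split into cases. If $\ell\neq 0$, the gradient in $\lambda$ equals $-\e\,\ell+O(\e)$, and the $\tL_2$-genericity of $\Tan$ together with $|\ell|\geq 1$ give a Lipschitz lower bound $\gtrsim\e$. If $\ell=0$ and $\kappa_0\neq 0$, the estimate is automatic for $\e$ small since $|\kappa_0|\geq 1$ while the $\lambda$-dependent part is $O(\e)$. The non-trivial case is $\ell=0$, $\kappa_0=0$: combined with the $\widetilde\pi_y$- and $\widetilde\pi_x$-constraints this forces specific geometric configurations; those with all $\jj_i\in\fZ$ are rectangles and belong to $\fR_4$, while the configurations involving $\sS$ that lie outside $\fR_4$ produce a non-zero linear combination $\sum\s_i\mu_{k_i}(\lambda)$, and the irreducibility of the polynomial $P(t,\lambda)$ over $\Q(\lambda)[t]$ (Theorem \ref{thm:reducibility4}) prevents this from vanishing identically in $\lambda$, yielding again a Lipschitz lower bound $\gtrsim\e$.

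Once the pointwise lower bound is established, I would bound the total bad measure by summing over admissible tuples. For fixed $\ell$, the selection rules leave two of the $\jj_i$ free and determine the other two up to finite ambiguity. Moreover, since $\Omega_\jj\sim|\jj|^2$, one has $|\delta_{\bj,\ell,\s}|\gtrsim\max_i|\jj_i|^2$ whenever $\max_i|\jj_i|\gg\jap{\ell}^2$, so the sum over $\bj$ truncates to polynomially many non-empty contributions in $\jap{\ell}$. Choosing $\tau_2$ large enough makes the resulting series convergent, and choosing $\gamma_2$ as a suitable power of $\e$ gives the claimed $|\cC^{(1)}\setminus\cC^{(2)}|\lesssim\e_2^{\kappa_2}$.

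The main obstacle is the $\ell=0$, $\kappa_0=0$ case: one has to enumerate all geometric configurations compatible with the conservation laws and with $\kappa_0=0$, verify that those outside $\fR_4$ necessarily involve $\sS$-vertices in a non-trivial way, and then extract from the algebraic information encoded by $P(t,\lambda)$ a quantitative non-degeneracy of the linear functional $\lambda\mapsto\sum\s_i\mu_{k_i}(\lambda)$. This is precisely the content of the fourth Melnikov analysis already carried out in \cite{Maspero-Procesi}, to which the proof would ultimately refer for the detailed combinatorial classification of resonances and for the algebraic arguments showing that $\fR_4$ exhausts all the truly non-removable ones.
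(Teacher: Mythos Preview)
Your overall plan --- expand the divisor as an integer part $\kappa_0$ plus $\e\tF$ plus $O(\e^2)$, establish non-degeneracy case by case, then sum the measures of bad sets --- matches the paper's. Two of your steps, however, have genuine gaps.

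First, the Lipschitz argument for $\ell\neq 0$ does not go through when $|\ell|$ is bounded and some $\jj_i\in\sS$. You write that the gradient is $-\e\ell+O(\e)$, but the $O(\e)$ hides the term $\e\sum_i\sigma_i\nabla_\lambda\mu_{k_i}(\lambda)$, whose size is $O(\e)$ \emph{independent of $\ell$}; for small $|\ell|$ nothing prevents cancellation, and a uniform Lipschitz lower bound $\gtrsim\e$ is simply not available. The paper therefore splits into $|\ell|\leq 4\tM_0$ and $|\ell|>4\tM_0$. For large $|\ell|$ the P\"oschel-type Lipschitz estimate does work (Lemma~\ref{c.2}). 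For bounded $|\ell|$ the paper instead uses that $\tF^\sigma_{\bj,\ell}(\lambda)$ ranges over a \emph{finite} list of non-identically-vanishing analytic functions of $\lambda$ (this is where the irreducibility of $P(t,\lambda)$ and the case analysis you sketch enter), and applies a sublevel-set estimate for analytic functions to obtain $|\tF|\geq\gamma_c$ off a set of measure $\lesssim\gamma_c^{1/{\tt k}}$ (Lemma~\ref{lem:cono}, Proposition~\ref{prop:cC}). Non-vanishing of an analytic function does not give a Lipschitz lower bound; it gives a sublevel bound, and that distinction is essential here.

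Second, your truncation of the $\bj$-sum is incorrect: the claim $|\delta_{\bj,\ell,\s}|\gtrsim\max_i|\jj_i|^2$ for large $\jj$ is false, because for each fixed $\ell$ the constraint $\kappa_0=0$ has infinitely many solutions in $\fA_4\setminus\fR_4$ with arbitrarily large $|\jj_i|$ --- the $|\jj_i|^2$ contributions cancel exactly in $\kappa_0$, leaving only the $O(\e)$ and $O(\e^2)$ pieces. What replaces your truncation is the observation that the $\e^2$-corrections $\Theta_{m}/\jap{m}^2$, $\Theta_{m,n}/(\jap{m}^2+\jap{n}^2)$, $\varpi_m/\jap{m}$ \emph{decay} in $|m|,|n|$. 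The paper exploits this via a finite induction on the number $n\leq 12$ of such correction terms present (Proposition~\ref{prop:Cs}): if some $|m_i|$ or $|n_i|^2$ exceeds $\jap{\ell}^{\tau_n}$, the associated correction is smaller than the inductively established lower bound and can be dropped, reducing to $n-1$ corrections; otherwise all $m_i,n_i$ are polynomially bounded in $\jap{\ell}$ and one sums over the resulting finitely many tuples directly. This telescoping scheme, not a crude size cutoff in $|\jj|$, is what makes the sum over $\bj$ finite.
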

The proof of the proposition, being quite technical, is postponed to 	 Appendix \ref{app:mes.m}.

An immediate consequence, following the same strategy as for the proof of Theorem \ref{thm:3b}, is the following result. We define $\Pi_{\fR_4}$ as the projection of a function in $D(\rho, r)$ onto the sum of monomials with indexes in $\fR_4$. Abusing notation, we define  analogously $\Pi_{\fR_2}$ as the projection onto monomials $e^{\im\ell\cdot\theta}\yy^l a_{\jj_1}^{\sigma_1} a_{\jj_2}^{\sigma_2}$ with $|l|=1$ and $(\jj_1,\jj_2,\ell, \sigma_1,\sigma_2)\in\fR_2$.

\begin{theorem}\label{prop:Birkhoff4}
There exist $0 < r_2\leq r_1$, $0<\rho_2 \leq\rho_1$  such that for all $0<\e\leq \e_2$, for all $\lambda\in \cC^\2$ and for all $r \in [0,  r_2]$, $\rho \in [\frac{\rho_2}{2} , \rho_2]$  there exists a symplectic 
	change of variables $\cL^\2$  well defined and majorant analytic from  $D(\rho/2,r/2)\to 
	D(\rho,r)$ such that
	\begin{equation}\label{def:HamAfterBirkhoff4}
	\cQ \circ \cL^\2 (\yy, \theta, \ba)= \omega \cdot \yy +  
	\sum_{\jj\in \Z^2\setminus 
		\cS_0}\Omega_\jj(\lambda, \e) |a_\jj|^2  +\cQ^{(2)}_{\rm Res}+\wt \cQ^{(\ge 3)}
	\end{equation}
	where  
	\begin{equation}\label{q2res}
	\cQ^{(2)}_{\rm Res}= \Pi_{\fR_4} \cQ^{(2)} + \Pi_{\fR_2} 
	\cQ^\2
	\end{equation}
	with $\fR_4$ defined in \eqref{def:R4}, $\fR_2$ defined in \eqref{res2}  and 
	$$
	|\cQ^{(2)}_{\rm Res}|_{\rho/2,r/2}\lesssim {r^2} \ , \qquad 
	|\wt \cQ^{(\ge 3)}|_{\rho/2,r/2}\lesssim \frac{r^3}{\sqrt{\e}} . 
	$$
	Moreover
	$\cL^{(2)}$ maps $D(\rho/2, r/2) \cap h^1  \to D(\rho, r)\cap h^1$, and if we denote $( \widetilde \cY,\widetilde\theta, \widetilde {\bf a})=\cL^{(2)}( \cY, \theta,  {\bf a})$, then 
	\begin{equation}\notag
	\left\| \widetilde{\bf a}-\ba\right\|_{\ell^1}\lesssim \|\ba\|_{\ell^1}^3.
	\end{equation}
\end{theorem}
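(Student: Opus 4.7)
The plan is to perform one more step of Birkhoff normal form, in the spirit of Theorem~\ref{thm:3b}, now eliminating the non-resonant degree-2 monomials of $\cQ^{(2)}$. I would look for $\cL^{(2)}$ as the time-one Hamiltonian flow of a generating function $\chi^{(2)}$ of degree 2, solving the homological equation
$$
\{ \widehat\cN, \chi^{(2)}\} + \cQ^{(2)} - \cQ^{(2)}_{\rm Res} = 0, \qquad \widehat\cN := \omega\cdot\yy + \sum_{\jj\in \Z_N^2\setminus \cS_0} \Omega_\jj(\lambda,\e) |a_\jj|^2.
$$
The degree-2 monomials in $\cQ^{(2)}$ are either quartic in $\ba$, of the form $e^{\im\ell\cdot\theta}\,a^{\sigma_1}_{\jj_1}a^{\sigma_2}_{\jj_2}a^{\sigma_3}_{\jj_3}a^{\sigma_4}_{\jj_4}$, or linear in $\yy$ and quadratic in $\ba$, of the form $e^{\im\ell\cdot\theta}\,\yy_i\, a^{\sigma_1}_{\jj_1}a^{\sigma_2}_{\jj_2}$ (pure $\yy^l$ terms with $|l|=2$ Poisson-commute with $\widehat\cN$ automatically and need no treatment). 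For each monomial whose index is not in $\fR_4$, respectively $\fR_2$, I would set the Fourier--Taylor coefficient of $\chi^{(2)}$ equal to that of $\cQ^{(2)}$ divided by $\im(\omega\cdot\ell + \sum_a\sigma_a \Omega_{\jj_a})$.

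The crucial point is that the divisors are bounded below: by Proposition~\ref{hopeful thinking} in the quartic case and by the second Melnikov condition \eqref{2.mc} in the quadratic case (the extra factor $\yy_i$ does not alter the denominator). Combined with $|\cQ^{(2)}|_{\rho,r}\lesssim r^2$ from Theorem~\ref{thm:3b} and the usual analytic loss in $\rho-\rho'$ coming from the factor $\langle\ell\rangle^{\tau_2}$, this yields $|\chi^{(2)}|_{\rho',r}^{\cC^{(2)}}\lesssim r^2/\e$. Lemma~\ref{lemma:Estimates}(i) then provides that $\cL^{(2)}$ is well defined and majorant analytic from $D(\rho/2, r/2)$ to $D(\rho,r)$ as soon as $r^2/\e$ is small enough (which fixes $r_2$ and $\rho_2$). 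Using the homological equation to simplify the Lie series exactly as in \eqref{blu.10}--\eqref{blu.30}, one finds
$$
\cQ\circ \cL^{(2)} = \widehat\cN + \cQ^{(2)}_{\rm Res} + \wt\cQ^{(\ge 3)}, \qquad \wt\cQ^{(\ge 3)} := \re2(\chi^{(2)}; \widehat\cN) + \re1(\chi^{(2)}; \cQ^{(2)}) + \cQ^{(\ge 3)}\circ \cL^{(2)}.
$$
Lemma~\ref{lemma:Estimates}(ii), together with the degree bookkeeping of Definition~\ref{def:degree}, shows that $\wt\cQ^{(\ge 3)}$ has minimal degree at least 3 and satisfies $|\wt\cQ^{(\ge 3)}|_{\rho/2,r/2}\lesssim r^3/\sqrt\e$. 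Invariance of $\wt\cM$ and $\wt\cP$ under $\cL^{(2)}$ is automatic from the selection rules of Remark~\ref{leggi_sel1}, since $\cQ^{(2)}-\cQ^{(2)}_{\rm Res}$ obeys them and the division step preserves the index structure.

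For the final clause, the mapping $D(\rho/2,r/2)\cap h^1\to D(\rho,r)\cap h^1$ is obtained as in the proof of Theorem~\ref{thm:3b}: by Theorem~\ref{thm:reducibility4} the quadratic form $\widehat\cN$ is equivalent to $\|\ba\|_{h^1}^2$ up to the linear $\yy$-piece, and the identity $\widehat\cN \circ \cL^{(2)} = \widehat\cN + O(r^3/\sqrt\e)$ along the flow transfers this equivalence. The cubic $\ell^1$ displacement $\|\wt\ba - \ba\|_{\ell^1}\lesssim \|\ba\|_{\ell^1}^3$ follows by a standard bootstrap along the Hamiltonian flow of $\chi^{(2)}$: since $\partial_{\bar a}\chi^{(2)}$ is either cubic in $\ba$ (from the quartic piece) or of the form $\yy\cdot\ba$ (from the mixed piece), both contributions are cubic in the $D(r)$-sense. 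The main obstacle really lies upstream in Proposition~\ref{hopeful thinking}, i.e.\ in verifying that the fourth-order Melnikov set $\cC^{(2)}$ has positive measure after excising only the large but structured resonant set $\fR_4$; once that is granted, the present step is a routine extension of Theorem~\ref{thm:3b} to one order higher.
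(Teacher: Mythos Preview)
Your proposal is correct and follows exactly the approach the paper intends: the paper's own proof reads in full ``The proof is analogous to the one of Theorem~\ref{thm:3b}, and we skip it,'' and what you have written is precisely that analogue, with the homological equation at degree~2, the fourth Melnikov bounds of Proposition~\ref{hopeful thinking} for the quartic monomials, the second Melnikov bounds \eqref{2.mc} for the $\yy\,a^{\sigma_1}a^{\sigma_2}$ monomials, and the same Lie-series bookkeeping via Lemma~\ref{lemma:Estimates}. One small point worth making explicit: the raw commutator estimates produce a term of size $r^4/\e$ in $\wt\cQ^{(\ge3)}$, and you need the standing smallness $r\lesssim\sqrt\e$ (from $r_2\le r_*\le\sqrt\e/4$) to absorb it into the stated bound $r^3/\sqrt\e$.
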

\begin{proof}
The proof is analogous to the one of Theorem \ref{thm:3b}, and we skip it.
\end{proof}

\section{Construction of the toy model}\label{sec:ToyModel}
Once we have performed (partial) Birkhoff normal form up to order 4, we can 
start applying the ideas developed in \cite{CKSTT} to Hamiltonian 
\eqref{def:HamAfterBirkhoff4}. Note that throughout this section $\eps>0$ 
is a fixed parameter. Namely, we do 
not use its smallness and we do not modify it. 

We first perform the (time dependent) change of variables to 
rotating 
coordinates
\begin{equation}\label{def:rotating}
 a_\jj=\beta_\jj \ e^{\im \Omega_\jj(\lambda, \e)t},
\end{equation}
to the Hamiltonian \eqref{def:HamAfterBirkhoff4}, which leads to the {corrected} Hamiltonian
\begin{equation}\label{def:HamRotAfterNF}
 \cQ_\rot(\yy, \theta, \beta,t)=\cQ \circ \cL^\2 \left(\yy, \theta, \{\beta_\jj 
\ e^{\im \Omega_\jj(\lambda, \e)t}\}_{\jj\in\mathbb{Z}_N^2\setminus\cS_0}\right) - \sum_{\jj \in \Z^2_N\setminus \cS_0} \Omega_\jj(\lambda, \e) |\beta_\jj|^2.
\end{equation}
We split this Hamiltonian as a suitable first order truncation 
$\cG$ plus two remainders,
\[
 \cQ_\rot(\yy, \theta, \beta, t)=\cG(\yy, \theta,  \beta) + 
\cJ_1(\yy, \theta, \beta, t)+\cR(\yy, \theta, \beta, t)
 \]
with 
\begin{equation}\label{def:HamTruncRotatingSimpl}
\begin{split}
\cG(\yy, \theta,  \beta)&=\omega \cdot \yy + \cQ^{(2)}_{\rm Res}(\yy,  
\theta, \beta)\\
\cJ_1(\yy, \theta, \beta,t)&=\cQ^{(2)}_{\rm Res}\left(\yy,\theta,  
\{\beta_\jj \ 
e^{\im \Omega_\jj(\lambda, 
\e)t}\}_{\jj\in\mathbb{Z}_N^2\setminus\cS_0}\right)-\cQ^{(2)}_{\rm Res}(\yy,  
\theta, \beta)\\
\cR(\yy, \theta, \beta,t)&=\wt \cQ^{(\ge 3)}\left(\yy, \theta, \{\beta_\jj \ 
e^{\im \Omega_\jj(\lambda, 
\e)t}\}_{\jj\in\mathbb{Z}_N^2\setminus\cS_0}\right)
\end{split}
\end{equation}
where $\cQ^{(2)}_{\rm Res}$ and  $\wt \cQ^{(\ge 3)}$ are the Hamiltonians introduced in Theorem \ref{prop:Birkhoff4}.


For the rest of this section  we focus our 
study on the truncated Hamiltonian $\cG$. Note that the 
remainder $\cJ_1$ is not smaller than $\cG$. Nevertheless it will be 
smaller when evaluated on the particular solutions we consider. The term $\cR$ is 
smaller than $\cG$ for small data since it is the remainder of the normal form 
obtained in 
Theorem \ref{prop:Birkhoff4}.
Later in Section \ref{sec:Approximation} we show that including the 
dismissed terms $\cJ_1$ and $\cR$ barely alters the dynamics of the  solutions of 
$\cG$ that we analyze.



\subsection{The finite set $\Lambda$}

We  now start constructing special dynamics for the Hamiltonian $\cG$ with the aim of treating the contributions of $\cJ_1$ and $\cR$ as remainder terms. Following \cite{CKSTT}, we do not study the full dynamics of $\cG$ 
but we restrict the dynamics to invariant subspaces. 
Indeed, we shall construct a set
$\Lambda\subset \fZ:=(\Z\times N\Z)\setminus (\cS_0\cup \sS)$ for some large $N$, in such a way that it generates an invariant subspace (for the dynamics of $\cG$) given by
\begin{equation}\label{def:ULambda}
    U_\Lambda:= \{\beta_\jj=0:\jj\not\in\Lambda\}.
\end{equation}
Thus, we consider the following definition.
\begin{definition}[Completeness]\label{completeness}
We say that a set $\Lambda\subset \fZ$ is {\em complete} if $U_\Lambda$ is 
invariant under
the dynamics of $\cG$.
\end{definition}

\begin{remark}
It can be easily seen that if $\Lambda$ is complete, $U_\Lambda$ is also 
invariant under
the dynamics of $\cG+\cJ_1$.
\end{remark}


We construct a complete set 
$\Lambda\subset \fZ$  (see Definition \ref{completeness}) and 
we study the restriction on it of the dynamics of the Hamiltonian $\cG$ in 
\eqref{def:HamTruncRotatingSimpl}. Following  
\cite{CKSTT}, we impose several conditions on 
$\Lambda$ to obtain dynamics as simple as possible.

The set $\Lambda$ is constructed in two steps. First we construct a 
preliminary set ${\Lambda_0} \subset \Z^2$ on which we  impose numerous 
geometrical conditions. Later on we scale ${\Lambda_0}$ by a factor $N$ to 
obtain  $\Lambda\subset  
(N\Z\times N\Z)\subset \fZ$.

The set ${\Lambda_0}$ is ``essentially'' the one described in \cite{CKSTT}.  The 
crucial point in that paper is to choose  carefully the modes so that 
each mode in $\Lambda_0$ only belongs to two rectangles with vertices in 
$\Lambda_0$. This allows to simplify considerably the dynamics and makes it 
easier to analyze. Certainly, this requires imposing several 
conditions on $\Lambda_0$. We add some extra conditions to adapt the set 
$\Lambda_0$ to the particular setting of the present paper.

We 
start by describing them. We split $\Lambda_0$ into $\gen$ disjoint generations $\La_0=\La_{01}\cup\ldots
\cup \La_{0\gen}$. We  call a quadruplet $(\jj_1,\jj_2, \jj_3, \jj_4) \in 
\Lambda_0^4$ a \emph{nuclear family} if $\jj_1, \jj_3 \in \Lambda_{0k}$, 
$\jj_2, \jj_4 \in \Lambda_{0,k+1}$, and the four vertices form a 
non-degenerate rectangle. Then, we require the following conditions.
\begin{itemize}
	\item Property $\mathrm{I}_{\Lambda_0}$ (Closure): If $\jj_1,\jj_2, \jj_3 \in 
{\Lambda_0}$ are three vertices of a rectangle, then the fourth vertex of that 
rectangle is also in ${\Lambda_0}$. 
	
	\item Property $\mathrm{II}_{{\Lambda_0}}$ (Existence and uniqueness of 
spouse 
and children): For each $1\leq k <\gen$ and every $\jj_1\in {\Lambda_0}_k$, 
there 
exists a unique spouse $\jj_3\in {\Lambda_0}_k$ and unique (up to trivial 
permutations) children $\jj_2,\jj_4 \in \Lambda_{0,k+1}$ such that 
$(\jj_1,\jj_2,\jj_3,\jj_4)$ is a nuclear family in ${\Lambda_0}$.
	
	\item Property $\mathrm{III}_{{\Lambda_0}}$ (Existence and uniqueness 
of parents 
and siblings): For each $1\leq k <\gen$ and every $\jj_2 \in \Lambda_{0,k+1}$ 
there 
exists a unique sibling $\jj_4\in \Lambda_{0,k+1}$ and unique (up to 
permutation) parents $\jj_1,\jj_3 \in {\Lambda_0}_k$ such that $(\jj_1,\jj_2,\jj_3,\jj_4)$ 
is a nuclear family in ${\Lambda_0}$.
	
	\item Property $\mathrm{IV}_{\Lambda_0}$ (Non-degeneracy): A sibling of 
any 
frequency $\jj$ is never equal to its spouse.
	\item Property $\mathrm{V}_{\Lambda_0}$ (Faithfulness): Apart from 
nuclear 
families, ${\Lambda_0}$ contains no other rectangles. In fact, by the closure 
property $\mathrm{I}_{\Lambda_0}$, this also means that it contains no right 
angled 
triangles other than those coming from vertices of nuclear families.
	
	
	\item Property $\mathrm{VI}_{\Lambda_0}$:  There 
are no 
two elements in ${\Lambda_0}$ such that
	$\jj_1 \pm \jj_2 = 0 $. 
	There are no three elements in ${\Lambda_0}$ such that $\jj_1-\jj_2+\jj_3=0$. 
	If four points in ${\Lambda_0}$ satisfy $\jj_1-\jj_2+\jj_3-\jj_4=0$ then either 
the relation is trivial or such points form a family.

	\item Property $\mathrm{VII}_{\Lambda_0}$:  There are no points in 
${\Lambda_0}$ with one of the coordinates equal to zero  i.e. $${\Lambda_0} \cap \big(\Z\times 
\{0\} \cup \{0\}\times \Z\big)= \emptyset.$$
	\item Property $\mathrm{VIII}_{\Lambda_0}$:  There are no two points in 
${\Lambda_0}$ 
which form a right angle with $0$.
\end{itemize}

Condition $\mathrm{I}_{\Lambda_0}$ is just a rephrasing of the completeness 
condition introduced in Definition \ref{completeness}. 
Properties  $\mathrm{II}_{\Lambda_0}$, $\mathrm{III}_{\Lambda_0}$, 
$\mathrm{IV}_{\Lambda_0}$, $\mathrm{V}_{\Lambda_0}$ correspond to being a 
family tree as stated in \cite{CKSTT}.

\begin{theorem}\label{thm:SetLambda}
Fix $\tK\gg 1$ and $s\in (0,1)$. Then, there exists
$\gen\gg 1$, $A_0\gg 1$, $\eta>0$, and a set $\Lambda_0\subset \Z^2$ with
\[
 \Lambda_0=\Lambda_{01}\cup\ldots\cup\Lambda_{0\gen},
\]
which satisfies  conditions $\mathrm{I}_{\Lambda_0}$ -- $\mathrm{VIII}_{\Lambda_0}$ and also
\begin{equation}\label{def:Growth}
\frac{\sum_{\jj\in\Lambda_{0,\gen-1}}|\jj|^{2s}}{\sum_{\jj\in\Lambda_{03}}|\jj|^{2s}}
\geq
 \dfrac 12 2^{(1-s)(\gen-4)}\ge \tK^2.
\end{equation}
Moreover, for any $A\geq A_0$, there exist  $\gen$ and  a 
function 
$f(\gen)$ 
satisfying 
\begin{equation}\label{estimate on fg}
e^{A^{\gen}}\leq f(\gen)\leq e^{2(1+\eta)A^{\gen}} 
\qquad \text{for $\gen$ large enough,}
\end{equation}
 such that each generation $\Lambda_{0k}$ has 
$2^{\gen-1}$
disjoint 
frequencies $\jj$ satisfying 
\begin{equation}\label{eq:BoundsS1:0}
C^{-1}f(\gen)\leq |\jj|\leq C3^\gen f(\gen),\ \ \jj\in\Lambda_{0k},
\end{equation}
and
\begin{equation}\label{eq:BoundsSN:0}
\frac{\sum_{\jj\in\Lambda_{0k}}|\jj|^{2s}}{\sum_{\jj\in\Lambda_{0i}}|\jj|^{2s}}\leq 
Ce^{s\gen}
\end{equation}
for any $1\leq i <k\leq \gen$ and some constant $C>0$ independent of $\gen$.
\end{theorem}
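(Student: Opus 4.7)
The plan is to adapt the combinatorial construction of \cite{CKSTT} to our setting, augmenting their argument with additional genericity conditions needed to secure the new Properties $\mathrm{VI}_{\Lambda_0}$--$\mathrm{VIII}_{\Lambda_0}$. The CKSTT construction already produces, for any prescribed number of generations $\gen$, a family tree $\Lambda_0=\Lambda_{01}\cup\cdots\cup\Lambda_{0\gen}$ with $|\Lambda_{0k}|=2^{\gen-1}$ satisfying $\mathrm{I}_{\Lambda_0}$--$\mathrm{V}_{\Lambda_0}$. Our task is essentially to show that this construction can be carried out in a way that additionally avoids the three bad configurations involving $0$ (short zero-sums, axis alignment, right angles at the origin), and that the resulting set satisfies the quantitative bounds \eqref{def:Growth}, \eqref{eq:BoundsS1:0}, \eqref{eq:BoundsSN:0}.

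First I would set up an abstract combinatorial family tree $\Sigma_\gen$ with generations $\Sigma_{\gen k}$ for which $\mathrm{II}_{\Lambda_0}$--$\mathrm{IV}_{\Lambda_0}$ are automatic (e.g., strings in $\{0,1\}^{\gen-1}$ of a given weight, children obtained by prescribed coordinate flips). The dynamical rectangle relation $\jj_1+\jj_3=\jj_2+\jj_4$, $|\jj_1|^2+|\jj_3|^2=|\jj_2|^2+|\jj_4|^2$ is encoded by identifying $\Z^2\simeq \C$ and parametrizing each family by a complex number $\sigma$ on the circle $|\sigma-\tfrac12|=\tfrac12$, via $\jj_2=\sigma \jj_1+(1-\sigma)\jj_3$, $\jj_4=(1-\sigma)\jj_1+\sigma \jj_3$. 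Iterating over the $\gen-1$ family steps, one obtains a finite-dimensional parameter space $\Theta\subset (S^1)^{O(2^\gen)}$ of candidate embeddings of $\Sigma_\gen$ into $\C$; clearing denominators and rescaling by a large integer $f(\gen)$ (chosen so that all vertices have integer coordinates) places the image in $\Z^2$ without affecting any of the scale-invariant incidence conditions.

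The heart of the argument is then a genericity step. Properties $\mathrm{V}_{\Lambda_0}$ (faithfulness, i.e.\ no unwanted rectangles or right triangles) and $\mathrm{VI}_{\Lambda_0}$--$\mathrm{VIII}_{\Lambda_0}$ (no short zero-sums, no axis hits, no right angles at the origin) each translate into the vanishing of a polynomial expression in the parameters $\sigma$. Since there are only finitely many such bad polynomials (at most polynomially many in $|\Sigma_\gen|$, hence finitely many) and none of them vanishes identically on $\Theta$ (which is an analytic manifold of positive dimension), their common complement is a nonempty open dense subset of $\Theta$. Choosing $\sigma$ in this good set yields the desired $\Lambda_0\subset\Z^2$. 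Property $\mathrm{I}_{\Lambda_0}$ (closure) is then automatic: faithfulness says the only rectangles in $\Lambda_0$ are nuclear families, which are by construction internal to $\Lambda_0$. The reason the new conditions $\mathrm{VI}_{\Lambda_0}$--$\mathrm{VIII}_{\Lambda_0}$ are compatible with CKSTT's argument is simply that they add a finite number of additional polynomial exclusions to an already finite list.

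For the quantitative bounds, observe that by construction each generation lies in an annulus of comparable inner and outer radii, with outer radius at most $C 3^\gen$ times the inner one, yielding \eqref{eq:BoundsS1:0} and \eqref{eq:BoundsSN:0} after rescaling by $f(\gen)$; the double-exponential size of $f(\gen)$ comes from clearing denominators generation by generation, giving \eqref{estimate on fg}. The growth ratio \eqref{def:Growth} follows from the CKSTT counting: each generation has $2^{\gen-1}$ elements of roughly comparable size (within a bounded multiplicative factor per generation), so summing $|\jj|^{2s}$ over $\Lambda_{0,\gen-1}$ against $\Lambda_{03}$ produces a factor at least $2^{\gen-4}\cdot c^{(s-1)(\gen-4)}$, which gives the stated $2^{(1-s)(\gen-4)}$ lower bound precisely when $s<1$. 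The main obstacle is the simultaneous fulfilment of the many algebraic constraints (the CKSTT ones together with $\mathrm{VI}_{\Lambda_0}$--$\mathrm{VIII}_{\Lambda_0}$) while keeping the parameter space nonempty and extracting sharp enough size estimates; this is handled by the genericity/perturbation argument outlined above combined with careful tracking of the rescaling factors $f(\gen)$ along the iteration.
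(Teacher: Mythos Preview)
Your proposal is correct and follows essentially the same approach as the paper, which does not give a self-contained proof but instead refers to \cite{CKSTT} (and \cite{GuardiaK12, GuardiaK12Err, Guardia14, GuardiaHP16}) for Properties $\mathrm{I}_{\Lambda_0}$--$\mathrm{V}_{\Lambda_0}$ and the estimate \eqref{eq:BoundsSN:0}, noting that Properties $\mathrm{VI}_{\Lambda_0}$--$\mathrm{VIII}_{\Lambda_0}$ are obtained by the same density/genericity argument and that \eqref{eq:BoundsS1:0} is implicit in the CKSTT construction. Your outline in fact supplies more detail than the paper does on how the density argument absorbs the extra polynomial constraints and how the rescaling factor $f(\gen)$ arises.
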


The construction of such kind of sets   was done first  in 
 \cite{CKSTT} (see also  \cite{GuardiaK12, GuardiaK12Err, Guardia14, 
GuardiaHP16}) where the authors construct sets $\Lambda$ satisfying Properties 
$\mathrm{I}_\Lambda$-$\mathrm{V}_\Lambda$ and estimate \eqref{eq:BoundsSN:0}.
The proof 
of Theorem  \ref{thm:SetLambda} follows the same lines as the ones in those 
papers. Indeed,  Properties 
$\mathrm{VI}_\Lambda$-$\mathrm{VIII}_\Lambda$ can be obtained through the same 
density argument. Finally, the estimate \eqref{eq:BoundsS1:0}, even if it is 
not stated explicitly in \cite{CKSTT}, it is an easy consequence of the proof 
in that paper (in  \cite{GuardiaK12, GuardiaK12Err,  GuardiaHP16} a slightly 
weaker estimate is used).

\begin{remark}
Note that  $s\in (0,1)$ implies that were are constructing a backward cascade orbit (energy is transferred from high to low modes). This means that the modes in each generation of $\Lambda_0$ are just switched oppositely $\Lambda_{0j} \leftrightarrow \Lambda_{0, \gen -j +1}$ compared to the ones constructed in \cite{CKSTT}. The second statement of Theorem \ref{thm:main} considers $s>1$ and therefore a forward cascade orbit (energy transferred from low to high modes). For this result, we need a set $\Lambda_0$ of the same kind as that of \cite{CKSTT}, which thus satisfies
\[
\frac{\sum_{\jj\in\Lambda_{0,\gen-1}}|\jj|^{2s}}{\sum_{\jj\in\Lambda_{03}}|\jj|^{2s}}
\geq
 \dfrac 12 2^{(s-1)(\gen-4)}\ge \tK^2
\]
instead of estimate \eqref{def:Growth}.
\end{remark}

We now scale ${\Lambda_0}$ by a factor $N$ satisfying \eqref{def:SizeN} 
and we denote by 
${\Lambda}:= N{\Lambda_0}$. Note that the listed properties $\mathrm{I}_{\Lambda_0}$ -- $\mathrm{VIII}_{\Lambda_0}$ are invariant under scaling. Thus, if they are satisfied by $\Lambda_0$, they are satisfied by $\Lambda$ too.

\begin{lemma}
	\label{lem:rangle}
There exists a set $\Lambda$ satisfying all statements of Theorem 
\ref{thm:SetLambda} (with a different $f(\gen)$ satisfying \eqref{estimate on fg}) and also the following additional properties.
\begin{enumerate}
\item If two points $\jj_1, \jj_2 \in 
{\Lambda}$ form a right angle with a point 
$(m,0) \in \Z \times \{0\}$, then  
	$$
	|m| \geq \sqrt{f(\gen)} \ .
	$$
\item $\Lambda\subset N\Z\times N\Z$ with 
\[
 N=f(\gen)^{\frac{4}{5}}.
\]
\end{enumerate}
\end{lemma}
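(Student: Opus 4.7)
The strategy is to start from the set $\Lambda_0$ produced by Theorem \ref{thm:SetLambda} (call $f_0(\gen)$ its growth function) and dilate it by a large even integer. Concretely, set
\[
N_* := 2\left\lceil \frac{f_0(\gen)^4}{2}\right\rceil, \qquad \Lambda := N_* \Lambda_0 .
\]
Because every property $\mathrm{I}_{\Lambda_0}$--$\mathrm{VIII}_{\Lambda_0}$ is invariant under integer dilations, and the ratios appearing in \eqref{def:Growth} and \eqref{eq:BoundsSN:0} are dilation-invariant as well, $\Lambda$ will inherit every statement of Theorem \ref{thm:SetLambda} with the new growth function
\[
\widetilde f(\gen) := N_* f_0(\gen) \asymp f_0(\gen)^5 .
\]
A short computation shows that $\widetilde f$ still fits into \eqref{estimate on fg} after a mild adjustment of $A$ and $\eta$ (e.g. replacing $A$ by $5^{1/\gen}A$ in the exponential bounds). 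Item (2) is then immediate: $\Lambda \subset N_*\Z \times N_*\Z$ with $N_* = \widetilde f(\gen)^{4/5}$, up to the harmless rounding.

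The heart of the lemma is item (1). Assume $\jj_1, \jj_2 \in \Lambda$ form a right angle with $(m,0)\in \Z\times\{0\}$, and write $\jj_i = N_* \widetilde{\jj}_i$ with $\widetilde{\jj}_i = (a_i, b_i) \in \Lambda_0$. Expanding the orthogonality relation $(\jj_1 - (m,0))\cdot(\jj_2-(m,0)) = 0$ gives the quadratic equation in $m$
\[
m^2 - N_*(a_1 + a_2)\, m + N_*^2 (a_1 a_2 + b_1 b_2) = 0 .
\]
By Property $\mathrm{VIII}_{\Lambda_0}$ we have $a_1 a_2 + b_1 b_2 = \widetilde{\jj}_1 \cdot \widetilde{\jj}_2 \neq 0$, so Vieta's formulas guarantee that both roots are nonzero; in particular $m\neq 0$. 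Writing the solutions as
\[
m = \frac{N_*}{2}\left(a_1 + a_2 \pm \sqrt{(a_1 - a_2)^2 - 4 b_1 b_2}\right)
\]
and recalling $m \in \Z$, the discriminant $D := (a_1-a_2)^2 - 4 b_1 b_2$ must be a nonnegative perfect square $k^2$. Thus $m = N_*(a_1+a_2\pm k)/2$ where $a_1+a_2\pm k$ is a nonzero integer, and since $N_*$ is even we conclude $|m| \geq N_*/2$.

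It remains to compare scales: $N_*/2 \asymp f_0(\gen)^4/2$ while $\sqrt{\widetilde f(\gen)} \asymp f_0(\gen)^{5/2}$, so $|m| \geq \sqrt{\widetilde f(\gen)}$ holds as soon as $f_0(\gen)^{3/2} \geq 2$, which is trivially true for $\gen$ large. The only genuine subtlety in this plan is the choice of the scaling exponent: a dilation by $N_* \sim f_0^s$ produces $\widetilde f \sim f_0^{s+1}$ and gives the lower bound $|m| \gtrsim f_0^s$, so the requirement $|m| \geq \sqrt{\widetilde f}$ forces $2s > s+1$, i.e.\ $s > 1$. The specific value $s=4$ is precisely what makes $N_* = \widetilde f(\gen)^{4/5}$, matching the form prescribed in item (2); once this scaling is fixed the rest of the argument is essentially forced.
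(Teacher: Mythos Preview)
Your argument handles only one of the two right-angle configurations. The phrase ``$\jj_1,\jj_2$ form a right angle with $(m,0)$'' in this paper covers \emph{both} the case where the right angle sits at $(m,0)$ (your quadratic $(\jj_1-(m,0))\cdot(\jj_2-(m,0))=0$) \emph{and} the case where it sits at one of the lattice points, say $\jj_2$, i.e.\ $(\jj_1-\jj_2)\cdot((m,0)-\jj_2)=0$. The paper's own proof treats these as cases (i) and (ii) separately, and the application in Lemma~\ref{lemma:ResonantHamRectangles} (case~\textbf{A1}) actually uses the second configuration. Your Vieta/discriminant argument says nothing about it.

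The missing case is the one that drives the scaling. With $\jj_i=N_*(a_i,b_i)$ and the right angle at $\jj_2$, one gets a \emph{linear} equation
\[
m = N_*\,\frac{a_2(a_1-a_2)+(b_1-b_2)b_2}{a_1-a_2},
\]
whose numerator is a nonzero integer by Property~$\mathrm{VIII}_{\Lambda_0}$ but whose denominator can be as large as $|a_1-a_2|\lesssim 3^{\gen}f_0(\gen)$. This yields only $|m|\gtrsim N_*/(3^{\gen}f_0(\gen))\sim f_0(\gen)^{3}/3^{\gen}$, which is still $\geq \sqrt{\widetilde f(\gen)}=f_0(\gen)^{5/2}$ for large $\gen$, but the bound is genuinely weaker than your $N_*/2$ and has to be checked. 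For the configuration you \emph{did} treat, your integrality argument is in fact cleaner than the paper's discriminant case-split and gives a sharper bound; the gap is simply that you omitted the other, limiting, configuration.
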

\begin{proof}
Consider any of the sets $\Lambda$ obtained in Theorem \ref{thm:SetLambda}. By property $\mathrm{VIII}_{{\Lambda_0}}$ one has $m \neq 0$. Define 
$\jj_3= (m,0)$. 
The condition for orthogonality is  either
	$$
	(i) \ (\jj_1- \jj_2)\cdot (\jj_3- \jj_2 ) = 0 \ \text{ or } \ \ \ (ii)  \ (\jj_1- \jj_3)\cdot 
(\jj_2- \jj_3) = 0 \ .
	$$
	Taking $\jj_i=(m_i,n_i)$, $i=1,2$, condition $(i)$ implies (after some computations) that $m$ is 
given by
	$$
	m = \frac{(n_1- n_2)n_2 + (m_1-m_2)m_2}{m_1- m_2} \ .
	$$
	Then since $|m_1- m_2| \leq 2 Cf(\gen)3^\gen$ and the numerator is not zero, we 
have 
\begin{equation}\label{eq:RightAngleCondition}
 |m|\geq \frac{1}{4Cf(\gen) 3^\gen}\geq\frac{1}{(f(\gen))^{3/2}}.
\end{equation}
Now we consider condition $(ii)$. One gets that $m$ is a root of the 
quadratic equation
	$$
	m^2 - (m_1+m_2)m + (m_1m_2 + n_1n_2) = 0 \ .
	$$
	First we note that $m_1m_2 + n_1n_2 \neq 0$ by property
$\mathrm{VIII}_{\Lambda_0}$, since $m=0$ cannot be a solution.
	Now consider the discriminant $\Delta= (m_1+m_2)^2 - 4(m_1m_2 + 
n_1n_2)$. If $\Delta <0$, then no right angle is possible. If $\Delta = 0$, then 
clearly $|m|\geq 1/2$, since once again $m = 0$ is not a solution.
	Finally let $\Delta >0$. Then 
	$$
	m = \frac{(m_1+ m_2)}{2} \left(1 \pm \sqrt{1- \frac{4(m_1m_2+ 
n_1n_2)}{(m_1+ m_2)^2}} \right) \ . 
	$$
	
	Denoting by $\gamma:=\frac{4(m_1m_2+ n_1n_2)}{(m_1+ m_2)^2}$, the condition $\Delta>0$ implies that $-\infty <\gamma< 1$. Splitting in two cases: $|\gamma|\leq 1$ and $\gamma<-1$ one can easily obtain that either way $m$ satisfies 
\eqref{eq:RightAngleCondition}. Now it only remains to scale the set $\Lambda$ 
by a factor $(f(\gen))^{4}$. Then, taking as new $f(\gen)$, $\wt 
f(\gen):=(f(\gen))^5$, the obtained set $\Lambda$ satisfies all statements of 
Theorem \ref{thm:SetLambda} and also the statements of Lemma \ref{lem:rangle}.
\end{proof}

\subsection{The truncated Hamiltonian on the finite set $\Lambda$ and the 
\cite{CKSTT} toy model}

We use the properties of the set $\Lambda$ given by Theorem \ref{thm:SetLambda} 
and Lemma \ref{lem:rangle} to compute the restriction of 
the Hamiltonian $\cG$ in  \eqref{def:HamTruncRotatingSimpl} to the invariant 
subset 
$U_\Lambda$ (see \eqref{def:ULambda}).

\begin{lemma}\label{lemma:ResonantHamRectangles}
Consider the set $\Lambda\subset  N\Z\times N\Z$ obtained in Theorem 
\ref{thm:SetLambda}. Then, the set 
\[
\cM_\Lambda=\left\{(\yy, \theta, \beta):\yy=0, \ \  \beta\in U_\Lambda\right\}
\]
is invariant under the flow associated to the Hamiltonian $\cG$. Moreover, 
$\cG$ restricted to $\cM_\Lambda$ can be written as 
\begin{equation}\label{def:HamToyModelLattice}
\cG\big\vert_{\cM_\Lambda}(\theta, \beta)=\cG_0(\beta) + 
\cJ_2(\theta,\beta)
	\end{equation}
	where 
\begin{equation}\label{def:HamLambdaIteam}
\cG_0(\beta)=-\frac12 
\sum_{\jj\in\Lambda}|\beta_\jj|^4+ \frac12 
\sum_{\substack{(\jj_1,\jj_2,\jj_3,\jj_4)\in 
\Lambda^4\\ \jj_i \text{ form a rectangle}}}^* \beta_{\jj_1}\bar 
\beta_{\jj_2}\beta_{\jj_3}\bar \beta_{\jj_4}
\end{equation}
and the remainder $\cJ_2$ satisfies
\begin{equation}\label{def:BoundJ2}
|\cJ_2|_{\rho,r}\lesssim  r^2(f(\gen))^{-\frac{4}{5}} .
\end{equation}
\end{lemma}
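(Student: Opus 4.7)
The plan is to proceed in three stages: establishing invariance of $\cM_\Lambda$, identifying the principal part of $\cG|_{\cM_\Lambda}$ as $\cG_0$, and estimating the remainder $\cJ_2$.

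For the invariance of $\cM_\Lambda$, I would check that both $\dot\yy = -\partial_\theta \cG$ and $\dot\beta_\jj = -\im\partial_{\bar\beta_\jj}\cG$ (for $\jj\notin\Lambda$) vanish on $\cM_\Lambda$. Since $\cG = \omega\cdot\yy + \Pi_{\fR_4}\cQ^{(2)} + \Pi_{\fR_2}\cQ^{(2)}$, first observe that monomials in $\Pi_{\fR_2}\cQ^{(2)}$ carry a factor $\yy^l$ with $|l|=1$, so they and their $\theta$- and $\bar\beta$-derivatives vanish at $\yy=0$. Among the four subcases of $\fR_4$ (see \eqref{def:R4}), Cases 2--4 all require at least one index in $\sS$ and hence vanish on $\cM_\Lambda$ since $\Lambda\cap\sS=\emptyset$. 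Only Case 1 (rectangles with $\ell=0$ and all four indices outside $\sS$) can contribute; there, $\ell=0$ gives $\partial_\theta=0$, while for $\jj\notin\Lambda$ the closure property $\mathrm{I}_{\Lambda_0}$ forbids rectangles with three vertices in $\Lambda$ and fourth vertex at $\jj$, so $\partial_{\bar\beta_\jj}\cG|_{\cM_\Lambda}=0$ as well.

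Next I would identify $\cG_0$. The restriction $\cG|_{\cM_\Lambda}$ consists solely of Case~1 rectangle monomials with all four vertices in $\Lambda$. Tracing these through the chain of canonical transformations $\cH \to \cK \to \cQ$ (via $\cL^{(0)}$, $\cL^{(1)}$, and $\cL^{(2)}$), the principal contribution (obtained by replacing $L=\mathrm{Id}+S$ with $\mathrm{Id}$, discarding the Poisson-bracket correction $\tfrac12\{\cK^{(1)},\chi^{(1)}\}$ appearing in \eqref{zebra2}, and ignoring the action of $\cL^{(2)}$) is exactly the restriction to $\Lambda$ of the starred rectangular sum together with the diagonal $-\tfrac12\sum|\beta_\jj|^4$ in the original quartic Hamiltonian $H^{IV}$ of \eqref{Ha0}. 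By property $\mathrm{VI}_{\Lambda_0}$, every non-trivial momentum-conserving quadruple in $\Lambda$ must form a nuclear family, so the starred sum reduces to precisely the sum over non-degenerate rectangles in $\Lambda$, yielding $\cG_0$ in the form \eqref{def:HamLambdaIteam}.

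The remainder $\cJ_2$ then absorbs all the discarded corrections: the $S$-part of $L$ from $\cL^{(0)}$, the Poisson bracket $\tfrac12\{\cK^{(1)},\chi^{(1)}\}$ in \eqref{zebra2}, and the action of $\cL^{(2)}$. The decisive smallness factor $N^{-1}$ originates from the smoothing property of $S$ (controlled in the norm $\lceil\cdot\rfloor_{\rho,-1}\lesssim\e$ by Theorem \ref{thm:reducibility}): for any $\jj=(m,n)\in\Lambda\subset N\Z\times N\Z$ we have $|m|\geq N=(f(\gen))^{4/5}$, so each application of $S$ to a $\Lambda$-supported sequence picks up a gain $\langle m\rangle^{-1}\lesssim N^{-1}$. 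Combined with the bounds $|\chi^{(1)}|_{\rho/2,r/2}^{\cC^{(1)}}\lesssim r/\sqrt\e$ and $|\cK^{(1)}|_{\rho,r}^{\cO}\lesssim\sqrt\e\,r$ from Section \ref{sec:CubicBirkhoff} (together with the Poisson bracket estimates of Lemma \ref{lemma:Estimates}), this yields $|\cJ_2|_{\rho,r}\lesssim r^2 N^{-1}=r^2(f(\gen))^{-4/5}$. The main technical difficulty lies precisely here: one must verify that every correction entering $\cJ_2$ genuinely inherits the $N^{-1}$ gain from the smoothing of $S$ on $\Lambda$-supported modes, rather than being controlled only by the coarser $\e$- or $\sqrt\e$-factors used in Lemma \ref{lem:norm.ham} and Theorem \ref{thm:3b}.
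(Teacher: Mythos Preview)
Your invariance argument and identification of $\cG_0$ are correct and match the paper's approach. The gap is in your treatment of the Poisson-bracket contribution $\tfrac12\{\cK^{(1)},\chi^{(1)}\}$ to $\cJ_2$: you attribute its $N^{-1}$ gain to the smoothing of $S$ from $\cL^{(0)}$, but this mechanism does not apply here. The generator $\chi^{(1)}$ is built from $\cK^{(1)}$ by dividing by the small divisor $\omega\cdot\ell+\sigma_1\Omega_{\jj_1}+\sigma_2\Omega_{\jj_2}+\sigma_3\Omega_{\jj_3}$, and the general Melnikov bound from Lemma~\ref{lemma:cubic:MeasureEstimate} only gives a lower bound of order $\e$. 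Using only your cited estimates $|\chi^{(1)}|\lesssim r/\sqrt\e$ and $|\cK^{(1)}|\lesssim\sqrt\e\,r$ together with Lemma~\ref{lemma:Estimates} yields merely $|\{\cK^{(1)},\chi^{(1)}\}|\lesssim r^2$, with no factor of $N^{-1}$ whatsoever.

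The paper obtains the $N^{-1}$ gain for this term by an entirely different route: it shows that when at least two of the three indices $\jj_1,\jj_2,\jj_3$ lie in $\Lambda$ (which is forced after restriction to $\cM_\Lambda$), the cubic small divisor is actually \emph{large}, of size $\geq N$. This requires a case-by-case analysis of the leading part $\vec\tm^2\cdot\ell+\sigma_1|\jj_1|^2+\sigma_2|\jj_2|^2+\sigma_3|\jj_3|^2$ for $|\ell|\le N^{1/4}$, rewriting it via the selection rules as inner products of the form $(\mathtt v-\jj_3,\jj_3-\jj_2)$ or $(\mathtt v-\jj_2,\mathtt v-\jj_3)$ with $\mathtt v=(-\vec\tm\cdot\ell,0)$. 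The crucial input is Lemma~\ref{lem:rangle}: points of $\Lambda$ cannot form a right angle with any low $x$-axis point, so these inner products are nonzero integers; since $\jj_2-\jj_3\in N\Z^2$, they are in fact $\geq N$ in absolute value. This geometric argument is the heart of the proof and is missing from your sketch. (A minor point: $\cL^{(2)}$ does not contribute to $\cJ_2$, since $\cQ^{(2)}_{\rm Res}=\Pi_{\fR_4}\cQ^{(2)}+\Pi_{\fR_2}\cQ^{(2)}$ is just a projection of $\cQ^{(2)}$ and is untouched by $\cL^{(2)}$.)
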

\begin{proof}
First we note that, since $\yy=0$ on $\cM_\Lambda$,
\[
\cG\big\vert_{\cM_\Lambda}=\cQ^{(2)}_\mathrm{Res}\big\vert_{\cM_\Lambda}= 
\Pi_{\fR_4} \cQ^{(2)} \big\vert_{\cM_\Lambda}
\]
where $\cQ^{(2)}_\mathrm{Res}$ is the Hamiltonian defined in Theorem 
\ref{prop:Birkhoff4}. We start by analyzing the Hamiltonian $\cQ^\2$ introduced 
in Theorem \ref{thm:3b}, which is defined as 
$$\cQ^\2= \cK^\2  + \frac{1}{2}\left\{\cK^\1,\chi^\1\right\}.$$
We analyze each term.  Here it plays a crucial role 
that $\Lambda\subset N\Z\times N\Z$ with $N=f(\gen)^{4/5}$.

In order to estimate $\cK^\2$, defined in 
\eqref{ham.bnf3}, we recall that $\Lambda$ does not have any mode in the 
$x$-axis and therefore the original quartic Hamiltonian has not been modified 
by the Birkhoff map \eqref{def:BirkhoffMap} (this is evident from the formula for $\cH^{(2)}$ in \eqref{giraffe2}). Thus,  it is enough to analyze how 
the quartic Hamiltonian has been modified by the linear change $\cL^{(0)}$ 
analyzed in Theorems \ref{thm:reducibility} and \ref{thm:reducibility4}. Using 
the   smoothing property of the change of coordinates $\cL^{(0)}$ given in  
Statement 5 of Theorem \ref{thm:reducibility}, one obtains
$$ \Pi_{\fR_4} \cK^{(2)} \big\vert_{\cM_\Lambda}= -\frac12 
\sum_{\jj\in\Lambda}|a_\jj|^4+ \frac12 \sum_{{\rm Rectangles}\subset \Lambda}  
a_{\jj_1}\bar a_{\jj_2}a_{\jj_3}\bar a_{\jj_4} + O\left(\frac{ r^2 
}{N}\right).$$
 
Now we deal with the term $\{\cK^\1,\chi^\1\}$. 
Since we only need to analyze 
$\Pi_{\fR_4}\{\cK^\1,\chi^\1\}\big\vert_{\cM_\Lambda}$, 
 we only need to consider monomials in $\cK^\1$ and in $\chi^\1$ which  have 
at least two 
indexes in $\Lambda$.   
We represent this by setting
\begin{equation}\notag
\chi^\1= \chi^\1_{\#\Lambda\le 1}+ \chi^\1_{\#\Lambda \ge 2} \,, 
\end{equation}
where $\#\Lambda \ge 2$ means that  we restrict to those monomials  which  have 
at least two indexes in $\Lambda$.    We then have
$$
\{\cK^\1,\chi^\1\}\big\vert_{\cM_\Lambda}= \{\cK^\1,\chi^\1_{\#\Lambda \ge 
2}\}\big\vert_{\cM_\Lambda}.
$$
We estimate the size of $\chi^\1_{\#\Lambda \ge 2}$. As explained in the 
proof of Theorem \ref{thm:3b}, $\chi^\1_{\#\Lambda \ge 2}$ has coefficients 
\begin{equation}\label{def:Chi1}
\chi^\1_{\ell,\bj,\vec{\s}}=\frac{\im \cK^\1_{\ell,\bj,\vec{\s}}}{\omega \cdot \ell + 
\s_1\Omega_{\jj_1}(\lambda, \e)+ \s_2 \Omega_{\jj_2}(\lambda, \e)+ \s_3 
\Omega_{\jj_3}(\lambda, \e)}
\end{equation}
with $\jj_2,\jj_3\in \Lambda$.

%
We first estimate the tails (in $\ell$) of  $\chi^\1$ and then we analyze the 
finite number of cases left. For the tails, it is enough to use Theorem 
\ref{thm:3b} to deduce  the following estimate for any $\rho\leq \rho_1/2$, 
where $\rho_1$ is the constant introduced in that theorem,
$$
\left| \sum_{|\ell|>\sqrt[4]N}
\chi^\1_{\ell,\bj,\vec{\s}} \, e^{\im \theta \cdot \ell} a_{\jj_1}^{\sigma_1}a_{\jj_2}^{\sigma_2}a_{\jj_3}^{\sigma_3}
\right|_{\rho,r}^{\cC^\1}\lesssim 
e^{-(\rho_1-\rho)\sqrt[4]{N}}
\left| \chi^\1 \right|_{
\rho_1 , r }^{\cC^\1} \le 
r e^{-(\rho_1-\rho)\sqrt[4]{N}}.
$$

We restrict our attention to monomials with $|\ell|\le \sqrt[4]{N}$. We take 
$\jj_2,\jj_3\in \Lambda$ and we consider different cases depending on $\jj_1$ 
and the properties of the monomial. In each case we  show that the denominator of \eqref{def:Chi1} is larger than $N$.

\paragraph{Case 1.} Suppose that $\jj_1\notin \sS$.   The 
selection rules  are (according to Remark \ref{leggi_sel1})
$$
\eta(\ell)+ \s_1+\s_2+\s_3=0\,,\quad \vec\tm\cdot \ell+ \s_1 
m_1+\s_2m_2+\s_3m_3=0\,,\quad \s_1 n_1+\s_2n_2+\s_3n_3=0
$$
and the leading term in the denominator of \eqref{def:Chi1} is 
\begin{equation}\label{def:SmallDivisorChi}
\vec{\tm}^2\cdot \ell+ \s_1 |\jj_1|^2+\s_2|\jj_2|^2+\s_3|\jj_3|^2
\end{equation}
where $\vec{\tm}^2=(\tm_1^2,\dots,\tm_\tk^2)$. 
We consider the following subcases:
\begin{itemize}
 \item[{\bf A1}]   $\s_3=\s_1=+1$, $\s_2=-1$. In this case  $\jj_1 - \jj_2 + \jj_3  -\tt v = 0$, where $\mathtt v:=( -\vec\tm\cdot \ell,0)$. We rewrite 
\eqref{def:SmallDivisorChi} as 
$$
\vec{\tm}^2\cdot \ell + (\vec\tm\cdot \ell)^2 - (\vec\tm\cdot \ell)^2 + 
|\jj_1|^2-|\jj_2|^2+|\jj_3|^2= \vec{\tm}^2\cdot \ell + (\vec\tm\cdot \ell)^2 - 
2\Big( \mathtt v -\jj_3,\jj_3-\jj_2\Big) .
$$
Assume first $\jj_2\neq \jj_3$. Since the set $\Lambda$ satisfies Lemma 
\ref{lem:rangle} 1.
and $\abs{\vec \tm \cdot \ell} \lesssim \sqrt[4]{N} \lesssim f(\gen)^{1/5}$,  we can ensure that $\jj_2$ and $\jj_3$ do not form a right angle with $\tt v$, thus 
\[\Big( \mathtt v -\jj_2,\jj_3-\jj_2\Big)\in \Z \setminus \{0\}.\]
Actually by the second statement of Lemma \ref{lem:rangle}, 
$\jj_3-\jj_2\in N\Z^2$ and hence, using also  $|\ell|\le \sqrt[4]{N}$, 
$$
\Big| \vec{\tm}^2\cdot \ell + (\vec\tm\cdot \ell)^2 - 2\Big( \mathtt v 
-\jj_3,\jj_3-\jj_2\Big)\Big|\ge 2N -  N/8 >N.
 $$

Now it remains the case $\jj_2=\jj_3$. Such monomials cannot exist in 
$\cH^{(1)}$ in \eqref{def of H1} since the monomials with two equal modes 
have been removed in \eqref{Ha0} (it does not support degenerate rectangles). 
Naturally a degenerate rectangle may appear after we apply the 
change $\cL^{(0)}$ introduced in Theorem \ref{thm:reducibility}. Nevertheless, 
the map $\cL^{(0)}$ is  identity plus smoothing (see statement 5 of that 
theorem), which leads to the needed  $N^{-1}$ factor.

 \item[{\bf B1}]  $\s_3=\s_2=+1$, $\s_1=-1$. Now the selection rule reads $-\jj_1 + \jj_2 + \jj_3  -\tt v = 0$, with again $\mathtt v=( -\vec\tm\cdot \ell,0)$. We rewrite 
\eqref{def:SmallDivisorChi} as 
$$
\vec{\tm}^2\cdot \ell + (\vec\tm\cdot \ell)^2 - (\vec\tm\cdot \ell)^2 - 
|\jj_1|^2+|\jj_2|^2+|\jj_3|^2= \vec{\tm}^2\cdot \ell + (\vec\tm\cdot \ell)^2 - 
2\Big( \mathtt v -\jj_3,\mathtt v-\jj_2\Big) . 
$$
By the first statement of  Lemma 
\ref{lem:rangle},   $\Big( \mathtt v -\jj_2,\mathtt v-\jj_3\Big)\neq 0$.
By Property $\mathrm{VIII}_\Lambda$ and 
the 
second statement of Lemma \ref{lem:rangle}, one has $|(\jj_2, 
\jj_3)|\geq N^2$ and estimate \eqref{eq:BoundsS1:0} implies $|\jj_2|, |\jj_3| 
\leq 
N^{3/2}$.  Then
 \begin{equation}\notag
 \abs{\Big( \mathtt v -\jj_2,\mathtt v-\jj_3\Big)}  \geq  |(\jj_2, \jj_3)| - 
|(\mathtt v,\jj_2+ \jj_3)| - |\mathtt v|^2 \geq N^2/4 \  
 \end{equation}
and one concludes as in {\bf A1}.

 
\item[{\bf C1}] \; $\s_1=\s_3=\s_2=+1$. The denominator 
\eqref{def:SmallDivisorChi} satisfies
 $$
| \vec{\tm}^2\cdot \ell + |\jj_1|^2+|\jj_2|^2+|\jj_3|^2|\ge 2N 
-|\vec{\tm}^2\cdot \ell| \ge  2N-N/8\ge N.
 $$
\end{itemize}
This completes the proof of Case 1.

\paragraph{Case 2.} Suppose that $\jj_1\in \sS$.   The selection rules are
$$
\eta(\ell)+\s_2+\s_3=0\,,\quad \vec\tm\cdot \ell+\s_2m_2+\s_3m_3=0\,,\quad \s_1 
n_1+\s_2n_2+\s_3n_3=0
$$
and the leading term in the denominator is 
\begin{equation}
\label{miao}
\vec{\tm}^2\cdot \ell+ \s_1 n_1^2+\s_2|\jj_2|^2+\s_3|\jj_3|^2,
\end{equation}
where 
$\vec{\tm}^2=(\tm_1^2,\dots,\tm_\tk^2)$. We can 
reduce Case 2 to  Case 1. 

\begin{itemize}
\item[{\bf B2}] $\s_2=\s_3=+1$, $\s_1=-1$.  Assume w.l.o.g. that 
$\jj_1=(\tm_1,n_1)$. Define $\tilde{\ell}=\ell+\be_1$, and obtain from the selection rules  and \eqref{miao} that
$$
 \vec\tm\cdot \tilde\ell - \tm_1 +m_2+m_3= \vec\tm\cdot \ell+m_2+m_3 =0 
\,.
$$
Then the leading term in the denominator becomes
$$
\vec\tm^2\cdot \tilde\ell  - (\tm_1^2+n_1^2 )+|\jj_2|^2+|\jj_3|^2 
$$
and  one proceeds as in case {\bf B1} with $\tilde \ell$ in place of $\ell$. 
\end{itemize}
The cases {\bf A2} and {\bf 
C2} are 
completely equivalent. 

\smallskip


In conclusion we have proved that 
\begin{equation}\label{def:ChiEstimate}
\left|\chi^\1_{\#\Lambda \ge 2}\big\vert_{\cM_\Lambda}\right|_{\rho,r}^{\cC^\1}\le  
r
N^{-1}.
\end{equation}
Item $(i)$ of  Lemma \ref{lemma:Estimates}, jointly with estimate \eqref{def:ChiEstimate}, implies that, for 
$\rho'\in (0,  \rho/2]$ and  $r' \in (0, r/2]$
$$
\left|\left\{\cK^\1,\chi^\1_{\#\Lambda \ge 
2}\right\}\big\vert_{\cM_\Lambda}\right|_{\rho',r'}^{\cC^\1}\lesssim  r^2 
N^{-1}.
$$
This completes the proof of Lemma \ref{lemma:ResonantHamRectangles}.
%
%
\end{proof}
The Hamiltonian $\cG_0$ in \eqref{def:HamLambdaIteam} is the Hamiltonian that  
the 
I-team derived to construct their toy model. A posteriori we will 
check that the remainder $\cJ_2$ plays a small role in our analysis.

The properties of $\Lambda$ imply that  
the equation 
associated to $\cG_0$
reads
\begin{equation}\label{eq:InftyODE:FirstFiniteReduction}
 \im \dot \beta_\jj=-\beta_\jj|\beta_\jj|^2+2
\beta_{\jj_{\mathrm{child}_1}}\beta_{\jj_{\mathrm{child}_2}}\ol{\beta_{
\jj_\mathrm{
spouse}}}+2
\beta_{\jj_{\mathrm{parent}_1}}\beta_{\jj_{\mathrm{parent}_2}}\ol{\beta_{
\jj_\mathrm {
sibling}}}
\end{equation}
for each $\jj\in \Lambda$. In the first and last generations, the parents and
children are set to zero respectively. 
Moreover, the particular form of this equation 
implies the following corollary.

\begin{corollary}[\cite{CKSTT}]\label{coro:Invariant}
Consider the subspace
\[
 \widetilde U_\Lambda=\left\{\beta\in U_\Lambda: 
\beta_{\jj_1}=\beta_{\jj_2}\,\,\text{for 
all
}\jj_1,\jj_2\in\Lambda_k\,\,\text{for some }k\right\},
\]
where all the members of a generation take the same value. Then, $\wt U_\Lambda$ is 
invariant under the flow associated to the Hamiltonian $\cG_0$. Therefore, 
equation \eqref{eq:InftyODE:FirstFiniteReduction} restricted to $ \widetilde 
U_\Lambda$ becomes
\begin{equation}\label{def:model}
 \im \dot  b_k=-b_k^2\overline b_k+2  \ol
b_k\left(b_{k-1}^2+b_{k+1}^2\right),\,\,k=1,\ldots \gen , 
\end{equation}
where
\begin{equation}\label{def:ChangeToToyModel}
 b_k=\beta_\jj\,\,\,\text{ for any }\jj\in\Lambda_k.
\end{equation}
\end{corollary}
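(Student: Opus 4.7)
The plan is to verify the two claims in turn, using the structural properties $\mathrm{II}_\Lambda$ and $\mathrm{III}_\Lambda$ (uniqueness of spouse/children and sibling/parents within $\Lambda$) together with the already-derived equations of motion \eqref{eq:InftyODE:FirstFiniteReduction} for $\cG_0$. The key observation is that these properties guarantee that spouses and siblings live in the same generation as $\jj$, children live in generation $k+1$, and parents live in generation $k-1$; nothing else appears on the right-hand side.

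First, I would show invariance. Fix a generation index $k$ and two sites $\jj, \jj' \in \Lambda_k$. If $\beta \in \widetilde U_\Lambda$, then by hypothesis $\beta_\jj = \beta_{\jj'}=:b_k$. I want to show $\dot\beta_\jj = \dot\beta_{\jj'}$. The spouse of $\jj$ lies in $\Lambda_k$ by $\mathrm{II}_\Lambda$, so $\beta_{\jj_{\mathrm{spouse}}} = b_k$; its children lie in $\Lambda_{k+1}$ (with a convention that $b_{\gen+1}:=0$ for $k=\gen$), so $\beta_{\jj_{\mathrm{child}_i}} = b_{k+1}$; by $\mathrm{III}_\Lambda$ its sibling is in $\Lambda_k$, so $\beta_{\jj_{\mathrm{sibling}}}=b_k$, and its parents are in $\Lambda_{k-1}$ (with $b_0:=0$ for $k=1$), so $\beta_{\jj_{\mathrm{parent}_i}}=b_{k-1}$. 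The very same identification applies verbatim to $\jj'$. Therefore the right-hand side of \eqref{eq:InftyODE:FirstFiniteReduction} evaluated at $\jj$ and at $\jj'$ are identical, so $\dot\beta_\jj = \dot\beta_{\jj'}$. This forces the flow to stay inside $\widetilde U_\Lambda$.

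Second, the reduction to \eqref{def:model} is now an immediate substitution: plugging the identifications $\beta_\jj = b_k$, $\beta_{\jj_{\mathrm{spouse}}} = \beta_{\jj_{\mathrm{sibling}}} = b_k$, $\beta_{\jj_{\mathrm{child}_1}}\beta_{\jj_{\mathrm{child}_2}} = b_{k+1}^2$, and $\beta_{\jj_{\mathrm{parent}_1}}\beta_{\jj_{\mathrm{parent}_2}} = b_{k-1}^2$ into \eqref{eq:InftyODE:FirstFiniteReduction} gives
\[
\im \dot b_k = -b_k|b_k|^2 + 2\, b_{k+1}^2 \,\overline{b_k} + 2\, b_{k-1}^2\, \overline{b_k}=-b_k^2\overline{b_k}+2\overline{b_k}(b_{k-1}^2+b_{k+1}^2),
\]
for $1 \le k \le \gen$, with the boundary conventions $b_0 = b_{\gen+1}=0$, which is precisely \eqref{def:model}.

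The only mildly delicate point — and really the only place where the careful design of $\Lambda$ enters — is that in writing down \eqref{eq:InftyODE:FirstFiniteReduction} one has already used property $\mathrm{V}_\Lambda$ (faithfulness) to guarantee that the rectangle sum in $\cG_0$ produces exactly the nuclear-family contributions (so no stray off-generation monomials arise), and properties $\mathrm{II}_\Lambda$--$\mathrm{III}_\Lambda$ to guarantee that each $\jj$ is a vertex of exactly one family as a parent and of exactly one family as a child. Once \eqref{eq:InftyODE:FirstFiniteReduction} is in hand, the corollary is essentially a tautology: the equations on each generation depend only on generation-level data, and so the diagonal subspace $\widetilde U_\Lambda$ is trivially invariant and the induced flow is the toy model \eqref{def:model}.
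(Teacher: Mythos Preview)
Your proof is correct and matches the paper's approach: the paper does not give an explicit argument but simply states that ``the particular form of this equation implies the following corollary,'' which is exactly the substitution you carry out. Your identification of which properties ($\mathrm{II}_\Lambda$, $\mathrm{III}_\Lambda$, $\mathrm{V}_\Lambda$) place spouse, sibling, children, and parents in the right generations is precisely the content underlying \eqref{eq:InftyODE:FirstFiniteReduction}, and once that equation is granted the corollary is, as you say, a direct substitution.
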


The dimension of  $ \widetilde U_\Lambda$ is $2\gen$, where $\gen$ is  the 
number of 
generations. In the papers \cite{CKSTT} and \cite{GuardiaK12}, the authors construct certain orbits of the  \emph{toy model} \eqref{def:model} which shift its mass from being localized at $b_3$ to being localized at $b_{\gen-1}$. These orbits will lead to orbits of the original equation \eqref{NLS} undergoing growth of Sobolev norms. 
\begin{theorem}[\cite{GuardiaK12}]\label{thm:ToyModelOrbit}
Fix a large $\gamma\gg 1$.  Then for any large enough $\gen$ and
$\mu=e^{-\gamma \gen}$, there exists an orbit of system \eqref{def:model},
$\kk>0$ (independent of $\gamma$ and $\gen$) and $T_0>0$ such that
\[
\begin{split}
 |b_3(0)|&>1-\mu^\kk\\
|b_i(0)|&< \mu^\kk\qquad\text{ for }i\neq 3
\end{split}
\qquad \text{ and }\qquad
\begin{split}
 |b_{\gen-1}(T_0)|&>1-\mu^\kk\\
|b_i(T_0)|&<\mu^\kk \qquad\text{ for }i\neq \gen-1.
\end{split}
\]
Moreover, there exists a constant $K>0$
independent of  $\gen$ such that $T_0$ satisfies
\begin{equation}\notag
 0<T_0< C \gen \ln \left(\frac 1 \mu \right)=C\,\gamma\,\gen^2.
\end{equation}
\end{theorem}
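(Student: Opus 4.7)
The plan is to realize the desired orbit as a shadow of an explicit heteroclinic chain inside the toy model \eqref{def:model}. For each $k = 1, \ldots, \gen$, restricting \eqref{def:model} to the one-mode subspace $\{b_j = 0, \forall j \neq k\}$ yields, after normalizing $|b_k| = 1$, the periodic orbit $\mathcal{T}_k := \{b_k(t) = e^{-\im t}e^{\im\theta_0}\}$. Next, I would identify the two-mode invariant subspaces $\Sigma_k := \{b \in \C^\gen : b_j = 0 \text{ for } j \neq k, k+1\}$, each of which is four-dimensional (real), carries the conserved mass $|b_k|^2 + |b_{k+1}|^2$, and is invariant under the global phase symmetry $b \mapsto e^{\im\vartheta}b$. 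Symplectic reduction by these two integrals collapses $\Sigma_k$ to a planar Hamiltonian system on which a direct phase-plane analysis shows $\mathcal{T}_k$ and $\mathcal{T}_{k+1}$ to be hyperbolic equilibria (linearization around $|b_k|=1,\, b_{k+1}=0$ produces eigenvalues $\pm\sqrt 3$) joined by two heteroclinic orbits lying on the separatrix level set of the reduced Hamiltonian.

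The core task is then to concatenate these heteroclinics into a genuine orbit of the full $2\gen$-dimensional system. I would proceed by a quantitative tracking/$\lambda$-lemma argument, inductively on the generation index. At stage $k$, the orbit is assumed to sit in a $O(\mu^{\kk})$-neighborhood of $\mathcal{T}_k$; using the hyperbolicity of $\mathcal{T}_k$ inside $\Sigma_k$, one tracks the unstable direction for a time comparable to $\log(1/\mu)$ until the orbit has reached an $O(1)$-neighborhood of the heteroclinic. A transit time of $O(1)$ then deposits the orbit in the local stable manifold of $\mathcal{T}_{k+1}$, along which the contracting direction drives it back into an $O(\mu^{\kk})$-neighborhood of $\mathcal{T}_{k+1}$ in another $\log(1/\mu)$ units of time. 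Summing the contributions from the $\gen - 4$ transitions $\mathcal{T}_3 \to \mathcal{T}_4 \to \cdots \to \mathcal{T}_{\gen-1}$ yields $T_0 \le C\gen \log(1/\mu) = C\gamma\gen^2$.

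The main obstacle is the control of the transverse directions, i.e.\ the amplitudes $b_{k-2}, b_{k+2}, \ldots$ lying outside $\Sigma_k$. A naive worry is that each passage near a hyperbolic periodic orbit $\mathcal{T}_k$ amplifies such perturbations by a factor $\mu^{-c}$, producing an exponential-in-$\gen$ blowup that would overwhelm the initial smallness assumption on $|b_i(0)|$ for $i \neq 3$. The resolution, which is the technical heart of \cite{GuardiaK12} (refining \cite{CKSTT}), is to exploit the nuclear-family structure of $\Lambda$ together with the specific form of \eqref{def:model} to show that the driving coefficients for $b_{k\pm 2}$ vanish identically at $\mathcal{T}_k$. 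Consequently the transverse amplification is only polynomial in $\log(1/\mu)$, and is moreover offset by the exponential contraction in the stable direction inside $\Sigma_k$. Choosing $\mu = e^{-\gamma\gen}$ with $\gamma$ sufficiently large then keeps the transverse modes below $\mu^{\kk}$ throughout all $\gen - 4$ transitions, and completes the proof.
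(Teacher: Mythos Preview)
The paper does not prove this theorem at all: immediately after the statement it simply records that the result is proven in \cite{CKSTT} (without time estimates) and that the time estimates were obtained in \cite{GuardiaK12}. So there is no ``paper's own proof'' to compare against beyond a citation.

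Your sketch is a faithful outline of the argument in those references: the heteroclinic chain of periodic orbits $\mathcal{T}_k$, the two-mode reduction on $\Sigma_k$ with hyperbolic eigenvalues $\pm\sqrt{3}$, the $O(\log(1/\mu))$ passage time near each saddle, and the resulting bound $T_0\le C\gen\log(1/\mu)$ are all correct. Two small points. First, the periodic orbit has phase $e^{\im t}$ rather than $e^{-\im t}$ (check the sign in $\im\dot b_k=-|b_k|^2b_k$). Second, your description of the transverse control is slightly off: the issue is not only what happens \emph{at} $\mathcal{T}_k$ but during the whole $k\to k+1$ transition, where the leading mode $b_{k+2}$ is genuinely forced by $b_{k+1}^2$ (which is $O(1)$ along the heteroclinic) and the trailing mode $b_{k-1}$ is forced by $b_k^2$. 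The mechanism that keeps these under control is not that the forcing vanishes, but a careful inductive bookkeeping of how much each passage can amplify the peripheral modes; the nearest-neighbor coupling ensures the amplification per step is bounded independently of $\gen$, and the choice $\mu=e^{-\gamma\gen}$ with $\gamma$ large absorbs the product over $\gen$ steps. Your phrase ``polynomial in $\log(1/\mu)$'' understates this; the actual loss is exponential in $\gen$ but beaten by $\mu^\kk$.
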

This theorem is proven in \cite{CKSTT} without time estimates. The time estimates were obtained in  \cite{GuardiaK12}.

\section{The approximation argument}\label{sec:Approximation}
In Sections \ref{sec:reducibility}, \ref{sec:CubicBirkhoff} and 
\ref{sec:QuarticBirkhoff} we have applied several transformations and in 
Sections \ref{sec:QuarticBirkhoff} and \ref{sec:ToyModel} we have  removed 
certain small remainders. This has allowed us to derive a simple equation, 
called toy model in \cite{CKSTT}; then, in Section \ref{sec:ToyModel}, we have analyzed some special orbits of this system.
The last step of the proof of Theorem \ref{thm:main} is to show 
that when  incorporating back the removed remainders ($\cJ_1$ and $\cR$ 
in \eqref{def:HamTruncRotatingSimpl} and $\cJ_2$ in 
\eqref{def:HamToyModelLattice}) and undoing the changes 
of coordinates performed in Theorems \ref{thm:reducibility} and \ref{thm:3b}, 
in 
Proposition \ref{prop:Birkhoff4} and in \eqref{def:rotating}, the toy model 
orbit obtained in Theorem \ref{thm:ToyModelOrbit} 
leads to a solution of the original equation \eqref{NLS} undergoing growth of 
Sobolev norms.

Now we analyze each remainder and each change of 
coordinates. From the orbit obtained in Theorem \ref{thm:ToyModelOrbit} and using 
  \eqref{def:ChangeToToyModel} one can obtain an orbit of Hamiltonian 
\eqref{def:HamLambdaIteam}. 
Moreover, both the equation of Hamiltonian
\eqref{def:HamLambdaIteam} and 
\eqref{def:model} are invariant under the scaling
\begin{equation}\label{def:Rescaling}
b^\nu(t)=\nu^{-1}b\left(\nu^{-2}t\right).
\end{equation}
By Theorem \ref{thm:ToyModelOrbit}, the time spent by the solution 
$b^\nu(t)$ is
\begin{equation}\label{def:Time:Rescaled}
 T=\nu^2 T_0\le \nu^2C\gamma \gen^2,
\end{equation}
where $T_0$ is the time obtained in Theorem \ref{thm:ToyModelOrbit}.

Now we prove that one can construct a solution of Hamiltonian 
\eqref{def:HamRotAfterNF} ``close''  to  the 
orbit $\beta^\nu$ of Hamiltonian \eqref{def:HamLambdaIteam} 
defined as
\begin{equation}\label{def:RescaledApproxOrbit}
\begin{split}
\beta^\nu_\jj(t)&=\nu^{-1}b_k\left(\nu^{-2}t\right)
\,\,\,\text{ for each }\ \jj\in\Lambda_k\\
\beta^\nu_\jj(t)&=0\,\,\,\text{ for each }\ \jj\not\in\Lambda,
\end{split}
\end{equation}
where $b(t)$ is the orbit given by Theorem \ref{thm:ToyModelOrbit}. Note that 
this implies incorporating the remainders in 
\eqref{def:HamTruncRotatingSimpl} and 
\eqref{def:HamToyModelLattice}.

We take a 
large $\nu$ so that \eqref{def:RescaledApproxOrbit} is small. In the original 
coordinates this will correspond to solutions close to the finite gap 
solution. 
Taking $\cJ=\cJ_1+\cJ_2$ (see \eqref{def:HamTruncRotatingSimpl} and 
\eqref{def:HamToyModelLattice}), the equations for $\beta$ and $\yy$  associated 
to Hamiltonian \eqref{def:HamRotAfterNF} can be written as
\begin{equation}\label{eq:betay}
\begin{split}
 \im\dot\beta&=\partial_{\ol \beta} \cG_0(\beta)+\partial_{\ol \beta} 
\cJ(\yy,\theta, \beta)+\partial_{\ol \beta} \cR(\yy,\theta, \beta)\\
 \dot \yy&=-\partial_\theta \cJ(\yy,\theta, \beta)-\partial_{\theta} 
\cR(\yy,\theta, \beta).
\end{split}
\end{equation}

Now we obtain estimates of the closeness of the orbit of the toy model obtained 
in Theorem \ref{thm:ToyModelOrbit} and orbits of Hamiltonian  
 \eqref{def:HamRotAfterNF}.

\begin{theorem}\label{thm:Approximation}
Consider a solution $(\yy, \theta, \beta)=(0,\theta_0, \beta^\nu(t))$ of 
Hamiltonian \eqref{def:HamLambdaIteam} for any $\theta_0\in \T^\tk$, where  
$\beta^\nu(t)=\{\beta^\nu_\jj(t)\}_{\jj\in \Z_N^2\setminus \cS_0}$ is  the 
solution  given
by \eqref{def:RescaledApproxOrbit}. Fix 
$\sigma$ small
 independent of $\gen$ and $\gamma$. Assume
\begin{equation}\label{def:LambdaOfN}
\frac12 \left(f(\gen)\right)^{1-\sigma}\leq \nu\leq \left(f(\gen)\right)^{1-\sigma}.
\end{equation}
Then any solution   $(\yy(t), \theta(t),\widetilde 
\beta(t))$ of \eqref{def:HamRotAfterNF}  with  initial condition 
$\widetilde\beta(0)=\widetilde\beta^0\in\ell^1$, $\yy(0)=\yy^0\in\R^\tk$  
with 
$\|\widetilde\beta^0-\beta^\nu(0)\|_{\ell^1}\leq 
\nu^{-1-4\sigma}$ and 
$|\yy^0|\leq \nu^{-2-4\sigma}$ and any $\theta(0)=\theta_1\in \T^\tk$, satisfies
\begin{equation}\notag
\left\|\widetilde \beta_\jj(t)-\beta^\nu_\jj(t)\right\|_{\ell^1}
 \leq \nu^{-1-\sigma}, \qquad \left|\yy(t)\right|\leq 
\nu^{-2-\sigma},
\end{equation}
for $0<t<T$, where $T$ is the time defined in \eqref{def:Time:Rescaled}.
\end{theorem}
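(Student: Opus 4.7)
The plan is to set $\xi(t) := \widetilde\beta(t) - \beta^\nu(t)$ and run a bootstrap (continuity) argument on the pair $(\xi(t),\yy(t))$, assuming on a maximal interval $[0, t_*]\subseteq[0,T]$ the a priori bounds $\|\xi(t)\|_{\ell^1} \leq \nu^{-1-\sigma}$ and $|\yy(t)| \leq \nu^{-2-\sigma}$, and then improving each by a factor of $\tfrac12$ in order to conclude $t_* = T$. Since Corollary \ref{coro:Invariant} together with the scaling \eqref{def:Rescaling} shows that $\beta^\nu$ solves $\im\dot\beta^\nu = \partial_{\bar\beta}\cG_0(\beta^\nu)$, subtracting from \eqref{eq:betay} gives
\begin{equation*}
\im\dot\xi = \bigl[\partial_{\bar\beta}\cG_0(\beta^\nu + \xi) - \partial_{\bar\beta}\cG_0(\beta^\nu)\bigr] + \partial_{\bar\beta}\cJ(\yy, \theta, \beta^\nu + \xi) + \partial_{\bar\beta}\cR(\yy, \theta, \beta^\nu + \xi),
\end{equation*}
together with $\dot\yy = -\partial_\theta\cJ - \partial_\theta\cR$. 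The aim is to integrate both equations over an interval of length at most $T \sim \nu^2$ and to verify that every contribution to $\|\xi\|_{\ell^1}$, respectively $|\yy|$, stays strictly below $\tfrac12\nu^{-1-\sigma}$, respectively $\tfrac12\nu^{-2-\sigma}$.

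Three of the contributions are handled directly. First, since $\cG_0$ is a polynomial of degree four with cubic vector field and $\|\beta^\nu\|_{\ell^1} \lesssim \nu^{-1}$, the Lipschitz term obeys $\|\partial_{\bar\beta}\cG_0(\beta^\nu + \xi) - \partial_{\bar\beta}\cG_0(\beta^\nu)\|_{\ell^1} \lesssim (\nu^{-2}+\|\xi\|_{\ell^1}^2)\|\xi\|_{\ell^1}$; after integrating and applying Gronwall's inequality the resulting multiplicative factor is $e^{O(\nu^{-2}\cdot T)}=O(1)$, so no growth is introduced. Second, the remainder $\cR=\widetilde\cQ^{(\ge 3)}$ satisfies $|\cR|_{\rho/2,r/2}\lesssim r^3/\sqrt\e$ by Theorem \ref{prop:Birkhoff4}; evaluated at $r=2\nu^{-1}$ this yields pointwise bounds $\|\partial_{\bar\beta}\cR\|_{\ell^1}\lesssim \nu^{-4}\e^{-1/2}$ and $|\partial_\theta\cR|\lesssim \nu^{-5}\e^{-1/2}$, which, once integrated over $[0,T]$, produce contributions $\lesssim \nu^{-2}\e^{-1/2}$ and $\lesssim \nu^{-3}\e^{-1/2}$ respectively, both negligible for $\nu$ sufficiently large. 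Third, the piece $\cJ_2$ is absorbed by exploiting its extra smallness $|\cJ_2|_{\rho,r}\lesssim r^2 f(\gen)^{-4/5}$ granted by Lemma \ref{lemma:ResonantHamRectangles}; since \eqref{def:LambdaOfN} imposes $\nu\leq f(\gen)^{1-\sigma}$, the same direct integration as for $\cR$ closes its contribution.

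The main obstacle, and the technical heart of the argument, is the contribution of $\cJ_1$, which collects the (quasi)resonant monomials of $\cQ^{(2)}_{\rm Res}$ evaluated in rotating coordinates. For each such monomial the oscillation factor takes the form $e^{\im\Gamma t}-1$ with $\Gamma = \omega\cdot\ell + \sum_k\sigma_k\Omega_{\jj_k}$. For the $\fR_4$ rectangle monomials in case 1 of \eqref{def:R4}, the leading Laplacian part of $\Gamma$ cancels identically, and by the asymptotic expansion \eqref{as.omega} provided by Theorem \ref{thm:reducibility4} the remaining correction satisfies $|\Gamma|\lesssim f(\gen)^{-2}$ uniformly on the support of the modes that interact with $\beta^\nu$ (all of size $|\jj|\sim f(\gen)$). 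The plan is to exploit the elementary bound $|e^{\im\Gamma t}-1|\leq |\Gamma|\, t$ together with the key smallness $|\Gamma|\,T\lesssim f(\gen)^{-2}\nu^2\leq f(\gen)^{-2\sigma}\ll 1$, which is precisely what hypothesis \eqref{def:LambdaOfN} guarantees; combined with the pointwise bound $|\cJ_1|_{\rho,r}\lesssim |\cQ^{(2)}_{\rm Res}|_{\rho,r}\lesssim r^2$ of Theorem \ref{prop:Birkhoff4}, this yields an integrated contribution to $\|\xi\|_{\ell^1}$ of order $f(\gen)^{-2\sigma}\nu^{-1}\ll\nu^{-1-\sigma}$, closing the bootstrap. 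The remaining cases 2--4 of \eqref{def:R4}, which involve modes in $\sS$ (where $\beta^\nu$ vanishes identically) or a nontrivial tangential index $\ell$, are treated by the same averaging principle: the a priori bounds on $\xi$ and $\yy$ produce additional powers of $\nu^{-\sigma}$, while the analyticity of $\cJ_1$ in $\theta\in\T^\tk_\rho$ allows the Taylor Fourier expansion in $\theta$ to be summed geometrically. The delicate organizational point will be ensuring that $|\Gamma|\,T\ll 1$ is genuinely the operative smallness in every term; this is the mechanism behind both the restriction \eqref{def:LambdaOfN} and, ultimately, the restriction $s<1$ in the first part of Theorem \ref{thm:main}.
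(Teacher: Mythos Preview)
Your overall strategy---bootstrap on $(\xi,\yy)$, Gronwall for the linear-in-$\xi$ piece coming from $\cG_0$, direct integration for $\cR$ and $\cJ_2$, and the oscillation bound $|e^{\im\Gamma t}-1|\le |\Gamma|t$ with $|\Gamma|\lesssim f(\gen)^{-2}$ for $\cJ_1$---is exactly the paper's. But there is a quantitative error that makes your bootstrap fail as written.

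You assert $\|\beta^\nu\|_{\ell^1}\lesssim \nu^{-1}$. This is false: $\beta^\nu$ is supported on $\Lambda$, which has $\gen\cdot 2^{\gen-1}$ modes each of size $\sim\nu^{-1}$, so $\|\beta^\nu\|_{\ell^1}\lesssim \nu^{-1}\gen\,2^{\gen}$. Consequently the Lipschitz constant of $\partial_{\bar\beta}\cG_0$ at $\beta^\nu$ is $\sim\nu^{-2}\gen^2 2^{2\gen}$, and since $T=\nu^2 T_0$ with $T_0\lesssim\gamma\gen^2$, the Gronwall factor is $e^{O(\gen^4 2^{2\gen})}$, \emph{not} $O(1)$. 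The same $\gen\,2^{\gen}$ factors contaminate your bounds on $\cR$, $\cJ_1$, $\cJ_2$ (you should take $r\sim\nu^{-1}\gen\,2^{\gen}$, not $r=2\nu^{-1}$). With only a $\nu^{-\sigma}$ margin, a factor $e^{O(\gen^4 2^{2\gen})}$ destroys the closure.

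The fix, and the reason the hypotheses are stated as they are, is precisely the gap between the assumed initial closeness $\nu^{-1-4\sigma}$ and the target $\nu^{-1-\sigma}$: one has a spare factor $\nu^{3\sigma}$ to absorb the Gronwall loss. Since \eqref{def:LambdaOfN} and \eqref{estimate on fg} give $\nu\ge \tfrac12 f(\gen)^{1-\sigma}\ge \tfrac12 e^{(1-\sigma)A^{\gen}}$ with $A$ as large as one likes, $\nu^{3\sigma}$ dominates $e^{C\gen^4 2^{2\gen}}$ for $\gen$ large. The paper organizes this by setting $M=\|\xi\|_{\ell^1}+\nu|\yy|$, deriving
\[
\dot M \le C\Bigl(\nu^{-4}\gen^4 2^{4\gen}+\nu^{-3}\gen^3 2^{3\gen}\bigl(f(\gen)^{-4/5}+t\,f(\gen)^{-2}\bigr)\Bigr)+C\nu^{-2}\gen^2 2^{2\gen}M+C\nu^{-1}\gen\,2^{\gen}M^2,
\]
and then applying Gronwall under the bootstrap $M\le\nu^{-1-\sigma/2}$. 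One further point you gloss over: in the $\dot\yy$ equation the terms linear in $\xi$ actually vanish identically, because the $\fR_4$ monomials supported on $\Lambda$ all have $\ell=0$ and hence are $\theta$-independent; this is needed to avoid a dangerous $\nu^{-2}\gen^2 2^{2\gen}\|\xi\|_{\ell^1}$ contribution to $\dot\yy$.
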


The proof of this theorem is deferred to Section \ref{sec:Approx}.  Note that 
the change to rotating coordinates in 
\eqref{def:rotating} does not alter the $\ell^1$ norm and therefore a similar 
result as this theorem can be stated for orbits of Hamiltonian 
\eqref{def:HamAfterBirkhoff4} (modulus adding the rotating phase).
%
%

\begin{proof}[Proof of Theorem \ref{thm:main}]
We use Theorem \ref{thm:Approximation} to obtain a solution of Hamiltonian \eqref{H.2} undergoing
growth of Sobolev norms. We consider the solution $(\yy^*(t), 
\theta^*(t),\ba^*(t))$ of this Hamiltonian with
initial condition
\begin{equation}\label{def:IniCond}
 \begin{split}
  \yy^*&=0\\
  \theta^*&=\theta_0\\
 a^*_{\jj}&=\nu^{-1}b_k(0)\qquad\text{for each }\jj\in\Lambda_k\\
  a^*_{\jj}&=0\qquad\qquad\,\quad\text{for each }\jj\not\in\Lambda_k
 \end{split}
\end{equation}
for an arbitrary choice of $\theta_0\in \mathbb T^\tk$.

We need to prove that Theorem \ref{thm:Approximation} applies to this solution.
To this end, we perform the changes of coordinates given in Theorems
\ref{thm:reducibility}, \ref{thm:3b} and   
\ref{prop:Birkhoff4}, keeping track of the
$\ell^1$ norm.

For $\cL^{(j)}$, $j=1,2$, Theorems \ref{thm:3b} and 
\ref{prop:Birkhoff4} imply the following. Consider $(\yy,\theta,\ba)\in D(\rho,r)$
and define $\pi_\ba(\yy,\theta,\ba):=\ba$. Then, we have
\begin{equation}\label{def:EstimatesChange12}
 \left\|\pi_\ba \cL^{(j)}(\yy,\theta,\ba)-\ba\right\|_{\ell^1}\lesssim \|\ba\|_{\ell^1}^2.
\end{equation}
This estimate is not true for the change of coordinates $\cL^{(0)}$ given in Theorem
\ref{thm:reducibility}. Nevertheless,  this change is smoothing (see Statement 
5 of Theorem \ref{thm:reducibility}). This implies that if all $\jj\in
\mathrm{supp}\{\ba\}$ satisfy $|\jj|\geq J$ then 
\begin{equation}\label{def:EstimatesChange0}
 \left\|\pi_{\bf a} \cL^{(0)}\left(\yy, \theta,\ba\right)-\ba\right\|_{\ell^1}\lesssim J^{-1}
\|\ba\|_{\ell^1}.
\end{equation}
Thanks to Theorem \ref{thm:SetLambda} (more precisely \eqref{eq:BoundsS1:0}), we can apply this estimate to  \eqref{def:IniCond} with $J=C f(\gen)$. Using the fact that 
$
 \left\|\ba^*\right\|_{\ell^1}\lesssim \nu^{-1}\gen 2^\gen
$
and the condition on
$\nu$ in \eqref{def:LambdaOfN}, one can check
\[
 \left\|\pi_{\bf a} \cL^{(0)}\left(0, \theta^*,\ba^*\right)-\ba^*\right\|_{\ell^1}\lesssim \nu^{-1}\gen 2^\gen f(\gen)^{-1}\leq \nu^{-3/2}.
\]
Therefore, we can conclude
\begin{equation}\notag
\left\|\pi_\ba \left(
\cL^{(2)}\circ\cL^{(1)}\circ \cL^{(0)}\left(0, \theta^*,\ba^*\right)\right)-\ba^*\right\|_{\ell^1}\lesssim 
\nu^{-3/2}.
\end{equation}
We define $( \widetilde \yy^*,\widetilde \theta^*, \widetilde \ba^*)$ the image
of the point \eqref{def:IniCond} under the composition of these three changes.
We apply Theorem \ref{thm:Approximation} to the solution of
\eqref{def:HamRotAfterNF} with this initial condition. Note that Theorem 
\ref{thm:Approximation} is stated in rotating coordinates (see 
\eqref{def:rotating}). Nevertheless, since this change is the identity on the 
initial conditions, one does not need to make any further modification. 
Moreover, the 
change \eqref{def:rotating} leaves invariant both the $\ell^1$ and Sobolev 
norms.
We show that such solution  $( \widetilde \yy^*(t),\widetilde 
\theta^*(t), \widetilde \ba^*(t))$
expressed in the original coordinates satisfies the desired growth of
Sobolev norms.

Define
\begin{equation}\notag
 S_i=\sum_{\jj\in\Lambda_i}|\jj|^{2s}  \text{ for }i=1,\dots, \gen.
\end{equation}
To estimate the initial Sobolev norm of the solution $( \yy^*(t), 
\theta^*(t), \ba^*(t))$, we first prove that
\begin{equation}\notag
 \left \|\ba^*(0)\right\|^2_{h^s}\lesssim \nu^{-2}S_3.
\end{equation}
The initial condition of the considered orbit  given in
\eqref{def:IniCond} has support  $\Lambda$ (recall that $\yy=0$). Therefore,
\[
\left\|\ba^*(0)\right\|^2_{h^s}=\sum_{i=1}^{\gen}\sum_{\jj\in\Lambda_i}|\jj|^{2s}
\nu^{-2}\left|b_i(0)\right|^2.
\]
Then, taking into account Theorem \ref{thm:ToyModelOrbit},
\[
\begin{split}
\sum_{i=1}^{\gen}\sum_{\jj\in\Lambda_i}|\jj|^{2s}
\nu^{-2}\left|b_i(0)\right|^2&\leq \nu^{-2}
S_3+\nu^{-2}\mu^{2\kk}\sum_{i\neq3}S_i\\
&\leq \nu^{-2}S_3\left(1+\mu^{2\kk}\sum_{i\neq3}\frac{S_i}{S_3}\right).
\end{split}
\]
From Theorem \ref{thm:SetLambda} we know that for $i\neq 3$,
\[
\frac{S_i}{S_3}\lesssim e^{s\gen}.
\]
Therefore, to bound these terms we use the definition of  $\mu$ from Theorem
 \ref{thm:ToyModelOrbit}. Taking  $\gamma>\frac{1}{2\kappa} $ and taking $\gen$ large enough, we
have that 
 \[
 \|\ba^*(0)\|_{h^s}^2\leq 2\nu^{-2}S_3.
 \]
To control the initial Sobolev norm, we need that $2\nu^{-2}S_3\leq\de^2$. 
To
this end, we need to use the estimates for $\nu$ given in Theorem 
\ref{thm:Approximation}, and the estimates for $|\jj| \in \Lambda$  and  for $f(\gen)$ given in Theorem 
\ref{thm:SetLambda}. Then, if we choose $\nu=(f(\gen))^{1-\sigma}$, we have
\[
 \|\ba^*(0)\|_{h^s}^2\lesssim (f(\gen))^{-2(1-\sigma-s)}  3^{2s\gen}2^\gen\leq  
e^{-2(1-\sigma-s)A^{\gen}} 3^{2s\gen}2^\gen.
 \]
Note that Theorem \ref{thm:Approximation} is valid for any fixed small 
$\sigma>0$. 
Thus, {\bf provided $s <1$}, we can choose  $0<\sigma<1-s$ and  
take $\gen$ large enough, so that we obtain an arbitrarily small initial 
Sobolev norm.
\begin{remark}
\label{rem:l2}
In case we ask only  the  $\ell^2$ norm of $\ba^*(0)$ to be small we can drop 
the condition $s <1$. Indeed 
$ \|\ba^*(0)\|_{\ell^2}\lesssim \nu^{-1} 2^{\gen }\gen$ which can be made 
arbitrary small by simply taking $\gen$ large enough (and $\nu$ as in \eqref{def:LambdaOfN}).
\end{remark}

Now we estimate the final Sobolev norm. First we bound $\|\ba^*(T)\|_{h^s}$ 
in terms of
$S_{\gen-1}$. Indeed,
\begin{equation}\label{what the hell}
\left \|\ba^*(T)\right\|^2_{h^s}\geq \sum_{\jj\in\Lambda_{\gen-1}}|\jj|^{2s} 
\left|a_\jj^*(T)\right|^2\geq 
S_{\gen-1}\inf_{\jj\in\Lambda_{\gen-1}}\left|a_\jj^*(T)\right|^2.
\end{equation}
Thus, it is enough to obtain a lower bound for $\left|a^*_\jj(T)\right|$ for 
$\jj\in\Lambda_{\gen-1}$. To obtain this estimate we need to express $\ba^*$ in 
normal 
form coordinates and use Theorem \ref{thm:Approximation}.  We split 
$|a^*_\jj(T)|$ 
as follows. Define $( \widetilde \yy^*(t),\widetilde 
\theta^*(t), \widetilde \ba^*(t))$ the image
of the orbit with initial condition \eqref{def:IniCond} under the changes 
of variables in Theorems
\ref{thm:reducibility} and \ref{thm:3b}, Proposition  
\ref{prop:Birkhoff4} and in \eqref{def:rotating}. Then,  
\[
\left|a^*_\jj(T)\right|\geq 
\left|\beta_\jj^\nu(T)\right|-
\left|\widetilde a^*_\jj(T)-\beta_\jj^\nu(T)e^{\im \Omega_\jj(\lambda,\eps)T}\right|- 
\left|\widetilde 
a^*_\jj(T)-a^*_\jj(T)\right|.
\]
The first term, by Theorem \ref{thm:ToyModelOrbit}, satisfies 
$|\beta_\jj^\nu(T)|\geq 
\nu^{-1}/2$. For the second one, using Theorem \ref{thm:Approximation}, we have
\[
\left|\widetilde 
a^*_\jj(T)-\beta_\jj^\nu(T)e^{\im \Omega_\jj(\lambda,\eps)T}\right| 
\leq \nu^{-1-\sigma}. 
\]
Finally, taking into account the estimates  
\eqref{def:EstimatesChange12} and \eqref{def:EstimatesChange0}, the third one 
can be bounded as
\[
 \left|\widetilde a^*_\jj(T)-a^*_\jj(T)\right|\leq \left\|\widetilde 
\ba^*(T)-\ba^*(T)\right\|_{\ell^1}\lesssim 
\|\ba^*(T)\|_{\ell^1}^2+\frac{\|\ba^*(T)\|_{\ell^1}}{|\jj|}.
\]
Now, by Theorem \ref{thm:Approximation} and Theorem \ref{thm:SetLambda} (more precisely the fact that $|\jj|\gtrsim 
f(\gen)$ for $\jj\in\Lambda$), 
\[
 \left|\widetilde a^*_\jj(T)-a^*_\jj(T)\right|\leq \nu^{-1-\sigma}.
\]
Thus, by \eqref{what the hell}, we can conclude that 
\[
\left \|\ba^*(T)\right\|^2_{h^s}\geq \frac{\nu^{-2}}{2}S_{\gen-1}, 
\]
which, by  Theorem \ref{thm:SetLambda}, implies
\[
\frac{\left \|\ba^*(T)\right\|^2_{h^s}}{\left \|\ba^*(0)\right\|^2_{h^s}}\geq 
\frac{S_{\gen-1}}{4S_3} \geq \frac{1}{8}2^{(1-s)(\gen-4)} . 
\]
Thus, taking $\gen$ large enough we obtain growth by a factor of $K/\de$. The time estimates can be easily deduced by \eqref{def:Time:Rescaled}, \eqref{def:LambdaOfN}, \eqref{estimate on fg} and Theorem \ref{thm:ToyModelOrbit}, which concludes the proof of the first statement of Theorem \ref{thm:main}.

For the proof of the second statement of Theorem \ref{thm:main} it is enough to point out that the condition $s<1$ has only been used in imposing that the initial Sobolev norm is small. The estimate for the $\ell^2$ norm can be obtained as explained in Remark \ref{rem:l2}.
%

\end{proof}

\subsection{Proof of Theorem 
\ref{thm:Approximation}}\label{sec:Approx}
To prove Theorem \ref{thm:Approximation}, we define
\[
 \xi=\beta-\beta^\nu(t).
\]
We use the equations in \eqref{eq:betay} to deduce an equation for $\xi$. It 
can be written as 
\begin{equation}\label{eq:ForXi}
 \im\dot \xi=\cZ_0(t)+\cZ_1(t)\xi+\cZ_1'(t)\ol \xi+\cZ_1''(t)\yy+\cZ_2(\xi,\yy,t),
\end{equation}
where 
\begin{equation}\label{def:Zs}
\begin{split}
 \cZ_0(t)=&\partial_{\ol \beta} 
\cJ(0,\theta,\beta^\nu)+\partial_{\ol \beta} \cR(0,\theta,\beta^\nu)\\
 \cZ_1(t)=&\partial_{\beta\ol \beta} \cG_0(\beta^\nu)+\partial_{\beta\ol \beta} 
\cJ(0,\theta,\beta^\nu)\\
\cZ_1'(t)=&\partial_{\ol\beta\ol \beta} \cG_0(\beta^\nu)+\partial_{\ol\beta\ol 
\beta} 
\cJ(0,\theta,\beta^\nu)\\
 \cZ_1''(t)=&\partial_{\yy\ol \beta} \cG_0(\beta^\nu)+\partial_{\yy\ol \beta} 
\cJ(0,\theta,\beta^\nu)\\
 \cZ_2(t)=&\partial_{\ol \beta} 
\cG_0(\beta^\nu+\xi)-\partial_{\ol \beta}\cG_0(\beta^\nu) 
-\partial_{\beta\ol 
\beta} 
\cG_0(\beta^\nu)\xi-\partial_{\ol\beta\ol \beta} 
\cG_0(\beta^\nu)\ol\xi\\
 &\partial_{\ol \beta} 
\cJ(\yy,\theta,\beta^\nu+\xi)-\partial_{\ol \beta}\cJ(0,\theta,\beta^\nu) 
-\partial_{\beta\ol 
\beta} 
\cJ(0,\theta,\beta^\nu)\xi-\partial_{\ol\beta\ol \beta} 
\cJ(0,\theta,\beta^\nu)\ol\xi\\
&-\partial_{\yy\ol \beta} 
\cJ(0,\theta,\beta^\nu)\yy
+\partial_{\ol \beta} \cR(\yy,\theta,\beta^\nu+\xi)-\partial_{\ol \beta} 
\cR(0,\theta,\beta^\nu).
\end{split}
\end{equation}
We analyze the equations for $\xi$ in \eqref{eq:ForXi}
and $\yy$ in \eqref{eq:betay}. 

\begin{lemma}\label{lemma:EqForxi}
Assume that $(\beta^\nu,\yy)$,$(\beta^\nu+\xi,\yy)\in D(r_2)$ (see \eqref{def:domain}) where $r_2$ has been given by Theorem \ref{prop:Birkhoff4}.
Then, the function $\|\xi\|_{\ell^1}$ satisfies
\[
\begin{split}
\frac{d}{dt}\|\xi\|_{\ell^1}\leq & C \nu^{-4}\gen^42^{4\gen}+ 
C \nu^{-3}\gen^32^{3\gen}
\left(f(\gen)^{-\frac{4}{5}} + t f(\gen)^{-2}\right)\\
&+
C\nu^{-2}\gen^22^{2\gen}\|\xi\|_{\ell^1}+C\nu^{-1}\gen 2^\gen 
|\yy|+C\nu^{-1}\gen 2^\gen \|\xi\|_{\ell^1}
^2+C\|\xi\|_{\ell^1}|\yy|+C|\yy|^2
\end{split}
\]
for some constant $C>0$ independent of $\nu$.
\end{lemma}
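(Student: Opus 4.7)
\noindent\textit{Proof plan.} The plan is to apply the trivial bound $\frac{d}{dt}\|\xi\|_{\ell^1}\le \|\dot\xi\|_{\ell^1}$ and estimate in $\ell^1$ each of the five terms $\cZ_0,\cZ_1\xi,\cZ_1'\bar\xi,\cZ_1''\yy,\cZ_2$ appearing in the equation \eqref{eq:ForXi} for $\xi$. The central a-priori input is the uniform bound
$$
\|\beta^\nu(t)\|_{\ell^1}\lesssim \nu^{-1}\gen\, 2^\gen,
$$
which follows from \eqref{def:RescaledApproxOrbit}, the uniform-in-time estimate on $|b_k|$ given by Theorem \ref{thm:ToyModelOrbit}, and the fact that $|\Lambda_k|=2^{\gen-1}$. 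Each of the five terms is then controlled by multiplying the appropriate power of $\|\beta^\nu\|_{\ell^1}$ (dictated by the homogeneity in $\beta$ of the Hamiltonian whose derivative is being evaluated) by either the size of $\xi$, $\yy$, or a small factor coming from the quantitative estimates of Theorem \ref{prop:Birkhoff4} and Lemma \ref{lemma:ResonantHamRectangles}.

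\smallskip

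For the contributions involving $\xi$ or $\yy$ the book-keeping is essentially routine. The linear-in-$\xi$ terms are dominated by $\partial^2_{\beta\bar\beta}\cG_0(\beta^\nu)$ and $\partial^2_{\bar\beta\bar\beta}\cG_0(\beta^\nu)$, each acting as an $\ell^1\!\to\!\ell^1$ operator of norm $\lesssim \|\beta^\nu\|_{\ell^1}^2$, giving $C\nu^{-2}\gen^2 2^{2\gen}\|\xi\|_{\ell^1}$, while the analogous contributions from $\cJ$ carry an extra small factor of $(f(\gen))^{-4/5}$ or $t(f(\gen))^{-2}$ and are absorbed. The $\yy$-linear term $\cZ_1''\yy$ arises only from $\cJ$, since $\cG_0$ is $\yy$-independent; by the selection rules of Remark \ref{leggi_sel1} the $\yy$-linear part of $\cJ$ is quadratic in $a$, so $\partial_{\yy\bar\beta}\cJ(0,\theta,\beta^\nu)$ is linear in $\beta^\nu$ and yields $\|\beta^\nu\|_{\ell^1}|\yy|\sim \nu^{-1}\gen 2^\gen|\yy|$. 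The Taylor remainder $\cZ_2$ is estimated by expanding $\partial_{\bar\beta}\cG_0(\beta^\nu+\xi)$, $\partial_{\bar\beta}\cJ(\yy,\theta,\beta^\nu+\xi)$, $\partial_{\bar\beta}\cR(\yy,\theta,\beta^\nu+\xi)$ to second order in $(\xi,\yy)$: the quadratic-in-$\xi$ remainder of $\cG_0$ produces $\nu^{-1}\gen 2^\gen\|\xi\|_{\ell^1}^2$, and the mixed and pure-$\yy$ Taylor remainders of $\cJ$ and $\cR$ produce the terms $C\|\xi\|_{\ell^1}|\yy|$ and $C|\yy|^2$ respectively.

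\smallskip

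The main obstacle is the forcing term $\cZ_0=\partial_{\bar\beta}(\cJ_1+\cJ_2)(0,\theta,\beta^\nu)+\partial_{\bar\beta}\cR(0,\theta,\beta^\nu)$. The contribution of $\cJ_2$ is immediate from \eqref{def:BoundJ2}: since $\cJ_2$ is quartic in $\beta$, its $\bar\beta$-gradient at $\beta^\nu$ has $\ell^1$-norm $\lesssim (f(\gen))^{-4/5}\|\beta^\nu\|_{\ell^1}^3$, which matches the first summand in the second bracket. The contribution of $\cR$, which has degree $\ge 3$ in the sense of Definition \ref{def:degree} (hence is at least quintic in the fields), yields at least $\|\beta^\nu\|_{\ell^1}^4\sim \nu^{-4}\gen^4 2^{4\gen}$. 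The delicate piece producing the explicit factor $t$ is the $\cJ_1$ contribution: by \eqref{def:HamTruncRotatingSimpl}, each admissible monomial of $\cQ^{(2)}_{\rm Res}$ indexed by $(\bj,\ell,\sigma)$ is multiplied in $\cJ_1$ by $e^{\im \Gamma_{\bj,\sigma}t}-1$ with $\Gamma_{\bj,\sigma}:=\sum_i \sigma_i \Omega_{\jj_i}$. For the indices relevant at $\beta^\nu$ (supported in $\Lambda\subset \fZ$) the $\jj_i$ form an admissible rectangle in $\fR_4$, so $\sum_i \sigma_i|\jj_i|^2=0$, and the asymptotics \eqref{as.omega} together with the lower bound $|\jj|\gtrsim f(\gen)$ from \eqref{eq:BoundsS1:0} give $|\Gamma_{\bj,\sigma}|\lesssim (f(\gen))^{-2}$. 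Using $|e^{\im \Gamma t}-1|\le |\Gamma|\,t$ together with $|\cQ^{(2)}_{\rm Res}|_{\rho,r}\lesssim r^2$ then yields $\|\partial_{\bar\beta}\cJ_1(0,\theta,\beta^\nu)\|_{\ell^1}\lesssim t\,(f(\gen))^{-2}\|\beta^\nu\|_{\ell^1}^3$, accounting for the second summand in the bracket. Summing all five contributions produces the stated differential inequality.
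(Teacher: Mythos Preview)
Your proposal is correct and follows essentially the same approach as the paper: you bound $\frac{d}{dt}\|\xi\|_{\ell^1}$ by $\|\dot\xi\|_{\ell^1}$ and estimate each of $\cZ_0,\cZ_1\xi,\cZ_1'\bar\xi,\cZ_1''\yy,\cZ_2$ using the a-priori bound $\|\beta^\nu\|_{\ell^1}\lesssim \nu^{-1}\gen\,2^{\gen}$, the degree-$\ge 3$ structure of $\cR$, the estimate \eqref{def:BoundJ2} for $\cJ_2$, and the oscillating-factor argument $|e^{\im\Gamma t}-1|\le |\Gamma|t\lesssim t(f(\gen))^{-2}$ for $\cJ_1$ — exactly as in the paper. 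One minor remark: the fact that the $\yy$-linear part of $\cQ^{(2)}_{\rm Res}$ is quadratic in $a$ comes from the degree-$2$ structure (Definition~\ref{def:degree} with $|l|=1$ forces $|\alpha|+|\beta|=2$), not from the selection rules of Remark~\ref{leggi_sel1}, but this does not affect the argument.
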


\begin{proof}
We compute estimates for each term in  \eqref{def:Zs}. For $\cZ_0$, we use the 
fact that the definition of $\cR$ in \eqref{def:HamTruncRotatingSimpl} and Theorem \ref{prop:Birkhoff4} imply $\|\partial_{\ol \beta} 
\cR(0,\theta,\beta^\nu)\|_{\ell^1}\leq \mathcal O(\|\beta^\nu\|_{\ell^1}^4)$.
Thus, it only 
remains to use the results in Theorems \ref{thm:ToyModelOrbit} (using 
\eqref{def:Rescaling}) and Theorem \ref{thm:SetLambda}, to obtain 
\[
\|\partial_{\ol \beta} 
\cR(0,\theta,\beta^\nu)\|_{\ell^1}\leq C\nu^{-4}\gen^4 2^{4\gen}.
\]
To bound $\partial_{\ol \beta} 
\cJ(0,\theta,\beta^\nu)$, the other term in $\cZ_0$, recall that 
$\cJ=\cJ_1+\cJ_2$ (see \eqref{def:HamTruncRotatingSimpl} and \eqref{def:HamToyModelLattice}). Then, we split into two terms $\partial_{\ol 
\beta} \cJ(0,\theta,\beta^\nu)=\partial_{\ol 
\beta} \cJ_1(0,\theta,\beta^\nu)+\partial_{\ol 
\beta} \cJ_2(\theta,\beta^\nu)$ as 
\begin{align}
\partial_{\ol 
\beta} \cJ_1(0,\theta,\beta^\nu)&=\partial_{\ol 
\beta} \left\{ \cG\left(0,\theta, (\beta^\nu_\jj 
e^{\im\Omega_\jj(\lambda, \e)t})_{\jj\in\Z^2_N\setminus\cS_0}\right) 
- \cG\left(0,\theta,\beta^\nu\right) \right\}\notag\\ 
&= \partial_{\ol 
\beta}\left\{ \cQ^\2_{\rm Res} \left(0,\theta, (\beta^\nu_\jj 
e^{\im\Omega_\jj(\lambda, \e)t} )_{\jj\in\Z^2_N\setminus\cS_0}\right) 
- \cQ^\2_{\rm Res}\left(0,\theta,\beta^\nu\right) \right\}\label{j1}\\ 
\partial_{\ol 
\beta} \cJ_2(\theta,\beta^\nu)&=  \partial_{\ol 
\beta} \left\{ \cG\left( 0,\theta, (\beta^\nu_\jj  e^{\im\Omega_\jj(\lambda, 
\e)t})_{\jj\in\Z^2_N\setminus\cS_0}\right)
- \cG_0\left((\beta^\nu_\jj 
e^{\im\Omega_\jj(\lambda, \e)t})_{\jj\in\Z^2_N\setminus\cS_0}\right) 
\right\}\label{j2}
\end{align}
To bound \eqref{j1}, 
recall that $\cQ^\2_{\rm Res}$ defined in \eqref{q2res} is the sum of two 
terms. 
Since $\Pi_{\fR_2}\cQ^\2$ is action preserving, the only terms contributing to  
\eqref{j1} are the ones coming from $\Pi_{\fR_4}\cQ^\2$. Since $\beta^\nu$ is 
supported on $\Lambda$, it follows from \eqref{def:R4} that 
\begin{align}
\partial_{\ol 
\beta} \cJ_1(0,\theta,\beta^\nu) = 
\left(\sum_{\jj_1, \jj_2, \jj_3 \in \Lambda \atop
|\jj_1|^2 - |\jj_2|^2 - |\jj_3|^2 - |\jj|^2 = 0}\left( e^{\im t 
(\Omega_{\jj_1} - \Omega_{\jj_2} + \Omega_{\jj_3} - \Omega_{\jj})} - 1 
\right)\, 
\cJ_{\jj_1 \jj_2 \jj_3 \jj } \, \beta^\nu_{\jj_1} \,  
\overline{\beta^\nu_{\jj_2}} \, \beta^\nu_{\jj_3} \right)_{\jj \in 
\Lambda} . 
\end{align}
In order to bound the oscillating factor, we  use the formula for the 
eigenvalues given in 
Theorem \ref{thm:reducibility4}, to obtain that, for $\jj_1, \jj_2, \jj_3, 
\jj \in \Lambda$, one has 
$$
 \abs{ e^{\im t (\Omega_{\jj_1} - \Omega_{\jj_2} + \Omega_{\jj_3} - 
\Omega_{\jj})} - 1 } \lesssim |t| \abs{\Omega_{\jj_1} - \Omega_{\jj_2} + 
\Omega_{\jj_3} - \Omega_{\jj}}
\lesssim  \frac{|t|}{f(\gen)^2} .  
$$
Hence,
for $t\in [0,T]$, using the estimate for $\mathcal{Q}_\mathrm{Res}^{(2)}$ given by Theorem \ref{prop:Birkhoff4},
\[
\|\partial_{\ol 
\beta} \cJ_1(0,\theta,\beta^\nu)\|_{\ell^1}\leq C t f(\gen)^{-2} 
\|\beta^\nu\|_{\ell^1}^3\leq 
Ct\nu^{-3}\gen^32^{3\gen}f(\gen)^{-2}.
\]
To bound \eqref{j2}, it is enough to use \eqref{def:BoundJ2} and 
\eqref{estimate on fg} to obtain 
\[
\|\partial_{\ol 
\beta} \cJ_2(\theta,\beta^\nu)\|_{\ell^1}\leq 
Cf(\gen)^{-\frac{4}{5}}\|\beta^\nu\|_{\ell^1}^3\leq 
C\nu^{-3}\gen^32^{3\gen}f(\gen)^{-\frac{4}{5}}.
\]
For the linear terms, one can easily see that 
\[
 \left\|\cZ_1(t)\xi\right\|_{\ell^1}\leq 
C\|\beta^\nu\|^2_{\ell^1}\|\xi\|_{\ell^1}\leq 
C\nu^{-2}\gen^22^{2\gen}\|\xi\|_{\ell^1}
\]
and the same for $ \left\|\cZ_1'(t)\ol\xi\right\|_{\ell^1}$. Analogously, 
\[
 \left\|\cZ_1''(t)\yy\right\|_{\ell^1}\leq C\|\beta^\nu\|_{\ell^1}|\yy|\leq 
C\nu^{-1}\gen 2^\gen|\yy|.
\]
Finally, it is enough to use the definition of $\cZ_2$,  the definition of $\cR$ in \eqref{def:HamTruncRotatingSimpl} and Theorem \ref{prop:Birkhoff4}  to show
\[
 \begin{split}
  \|\cZ_2\|\leq& \, 
\|\beta^\nu\|_{\ell^1}
\|\xi\|^2_{\ell^1}+ \|\beta^\nu\|_{\ell^1}^2 |\yy| + \|\xi\|_{\ell^1}|\yy|+\|\beta^\nu\|_{\ell^1}
^3\|\xi\|_{\ell^1}+|\yy|^2\\
\leq &\, C\nu^{-1}\gen 2^\gen |\yy|
\|\xi\|^2_{\ell^1}+ C\nu^{-2}\gen^2 2^{2\gen} 
\|\xi\|_{\ell^1}|\yy|+C\nu^{-3}\gen^3 2^{3\gen}
\|\xi\|_{\ell^1}+|\yy|^2.
 \end{split}
\]
\end{proof}

\begin{lemma}\label{lemma:EqFory}
Assume that $(\beta^\nu,\yy)$, $(\beta^\nu+\xi,\yy)\in D(r_2)$ (see \eqref{def:domain}) where $r_2$ has been given by Theorem \ref{prop:Birkhoff4}. Then, the function $|\yy|$ satisfies
\[
\begin{split}
 \frac{d}{dt}|\yy|\leq&\, C\nu^{-5}\gen^52^{5\gen}
+C\nu^{-3}\gen^32^{3\gen}\|\xi\|_{\ell^1}
^2\\&+C\nu^{-1}\gen2^{\gen}\|\xi\|_{\ell^1}
^3+C\|\xi\|_{\ell^1}|\yy|+C\nu^{-3}\gen^32^{3\gen}|\yy|^2
\end{split}
\]
for some constant $C>0$ independent of $\nu$.
\end{lemma}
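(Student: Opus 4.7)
The proof parallels that of Lemma \ref{lemma:EqForxi}. Starting from the $\yy$-equation in \eqref{eq:betay} and using the identity $\cG + \cJ_1 = \omega\cdot\yy + \cQ^{(2)}_{\rm Res}(\yy, \theta, \{\widetilde\beta_\jj e^{\im \Omega_\jj t}\}_\jj)$ that follows from \eqref{def:HamTruncRotatingSimpl}, I write
\[
\dot\yy = -\partial_\theta \cQ^{(2)}_{\rm Res}\big(\yy, \theta, \{\widetilde\beta_\jj e^{\im \Omega_\jj t}\}_\jj\big) - \partial_\theta \cR(\yy, \theta, \widetilde\beta),
\]
and Taylor expand the right-hand side in $(\xi, \yy)$ around the base point $(0, \theta, \beta^\nu)$, obtaining a drift $\cW_0(t)$, linear coefficients $\cW_1, \cW_1', \cW_1''$ and a higher-order remainder $\cW_2$, in exact analogy with \eqref{def:Zs}.

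The crucial structural observation is that the $\cQ^{(2)}_{\rm Res}$ piece of the drift and of all three linear coefficients vanishes identically at $(0, \theta, \beta^\nu)$. Indeed, since $\beta^\nu e^{\im \Omega t}$ is supported on $\Lambda \subset \fZ$, a monomial in $\Pi_{\fR_4}\cQ^{(2)}$ contributes only if all four of its indices lie in $\Lambda$; by property $\mathrm{V}_\Lambda$ (faithfulness) this places us in case 1 of \eqref{def:R4} and forces $\ell = 0$, whereas $\Pi_{\fR_2}\cQ^{(2)}$ vanishes at $\yy = 0$. Hence $\cQ^{(2)}_{\rm Res}(0, \theta, \beta^\nu e^{\im \Omega t})$ carries no $\theta$-dependence at all. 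The same reasoning survives one extra differentiation: $\partial_{\bar\beta_\bk}\cQ^{(2)}_{\rm Res}$ vanishes at the base point unless $\bk\in\Lambda$ (for three vertices of a rectangle in $\Lambda$ force the fourth into $\Lambda$ by $\mathrm{V}_\Lambda$), after which again $\ell = 0$; and $\partial_\yy \cQ^{(2)}_{\rm Res}$ only collects $\Pi_{\fR_2}$-terms $\sim\yy_i|a_\jj|^2$ which are $\theta$-independent. Consequently $\cW_0(t) = -\partial_\theta \cR(0, \theta, \beta^\nu)$, and combining $|\cR|_{\rho, r}\lesssim r^3/\sqrt{\e}$ from Theorem \ref{prop:Birkhoff4} with the majorant estimate $|\partial_\theta F|\leq r^2|F|_{\rho, r}$ at $r \sim \|\beta^\nu\|_{\ell^1}\lesssim \nu^{-1}\gen 2^\gen$ (Theorems \ref{thm:ToyModelOrbit} and \ref{thm:SetLambda}) yields $|\cW_0(t)|\lesssim \nu^{-5}\gen^5 2^{5\gen}$, the leading term of the claimed bound.

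The remaining contributions are handled by the same majorant-norm machinery. The linear-in-$\xi$ and linear-in-$\yy$ coefficients surviving from $\cR$ satisfy $\|\partial_\theta \partial_{\bar\beta}\cR(0,\theta,\beta^\nu)\|_{\ell^\infty}\lesssim \|\beta^\nu\|_{\ell^1}^4 \sim \nu^{-4}\gen^4 2^{4\gen}$ and $|\partial_\theta \partial_\yy \cR(0,\theta,\beta^\nu)|\lesssim \|\beta^\nu\|_{\ell^1}^3 \sim \nu^{-3}\gen^3 2^{3\gen}$; both are absorbed by Young's inequality $ab\leq \tfrac12 a^2 + \tfrac12 b^2$ into the constant $\nu^{-5}\gen^5 2^{5\gen}$ together with $\nu^{-3}\gen^3 2^{3\gen}\|\xi\|_{\ell^1}^2$ and (using the domain constraint $|\yy|\leq r_2^2$) $\nu^{-3}\gen^3 2^{3\gen}|\yy|^2$. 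The genuinely quadratic and cubic remainders $\nu^{-3}\gen^3 2^{3\gen}\|\xi\|_{\ell^1}^2$, $\|\xi\|_{\ell^1}|\yy|$, $\nu^{-3}\gen^3 2^{3\gen}|\yy|^2$ and $\nu^{-1}\gen 2^\gen\|\xi\|_{\ell^1}^3$ then emerge from second- and third-order Taylor derivatives of $\cG+\cJ_1+\cR$, estimated via the degree count $2|l|+|\alpha|+|\beta|\geq 5$ for $\wt\cQ^{(\geq 3)}$ and the quartic character of $\cG+\cJ_1$. The principal obstacle is precisely the structural vanishing highlighted above: without property $\mathrm{V}_\Lambda$ the drift would inherit size $\sim\|\beta^\nu\|_{\ell^1}^3/\sqrt{\e}\sim \nu^{-3}\gen^3 2^{3\gen}$, incompatible with the approximation scheme of Theorem \ref{thm:Approximation}; once that vanishing is in hand, the rest reduces to routine majorant calculus and AM-GM bookkeeping.
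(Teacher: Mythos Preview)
Your approach coincides with the paper's: both expand $\dot\yy$ around the base point $(0,\theta,\beta^\nu)$ into drift, linear, and higher-order pieces, and both identify the key structural fact that the linear-in-$(\xi,\bar\xi,\yy)$ coefficients arising from $\cJ$ (equivalently from $\cQ^{(2)}_{\rm Res}$) vanish identically because the surviving monomials are $\theta$-independent. After that, the remaining estimates are routine majorant bookkeeping.

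Two corrections. First, the vanishing argument does not rely on $\mathrm{V}_\Lambda$ (faithfulness) or $\mathrm{I}_\Lambda$ (closure); it rests simply on $\Lambda\cap\sS=\emptyset$. With at least three indices in $\Lambda\subset\fZ$, cases 2--4 of \eqref{def:R4} (which require respectively two, three, and four indices in $\sS$) are excluded by counting, so one is forced into case~1 with $\ell=0$. This is exactly the paper's reasoning: the only $\theta$-dependent monomials in $\fR_4$ are those of line~3, and they are supported outside $\Lambda$.

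Second, your absorption of the linear-in-$\yy$ contribution from $\cR$, of size $\sim\nu^{-3}\gen^3 2^{3\gen}|\yy|$, via Young's inequality and the domain constraint $|\yy|\le r_2^2$ does not produce the pair $\bigl(\nu^{-5}\gen^5 2^{5\gen},\ \nu^{-3}\gen^3 2^{3\gen}|\yy|^2\bigr)$ as claimed: splitting $ab\le\tfrac12 A+\tfrac12 A^{-1}b^2$ with $A^{-1}(\nu^{-3}\gen^3 2^{3\gen})^2=\nu^{-3}\gen^3 2^{3\gen}$ forces $A=\nu^{-3}\gen^3 2^{3\gen}$, which dominates $\nu^{-5}\gen^5 2^{5\gen}$. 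This does not damage the overall argument---after multiplying by $\nu$ the term becomes $\nu^{-2}\gen^3 2^{3\gen}|\yy|\le\nu^{-2}\gen^22^{2\gen}M$ (using $\gen 2^\gen\ll\nu$), which is already present in the $\dot M$ inequality---but as a derivation of the lemma exactly as stated the step is not quite right.
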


\begin{proof}
 Proceeding as for $\dot \xi$, we write the equation for $\dot\yy$ as 
 \begin{equation}\label{eq:ForY}
  \dot \yy= 
\cX_0(t)+\cX_1(t)\xi+\cX_1'(t)\ol\xi+\cX_1''(t)\yy+\cX_2(\xi,\yy,t),
 \end{equation}
with
\[
 \begin{split}
  \cX_0(t)=& -\partial_\theta \cJ(0,\theta,\beta^\nu)-\partial_{\theta} 
\cR(0,\theta,\beta^\nu).
\\
  \cX_1(t)=& \partial_{\beta\theta} 
\cJ(0,\theta,\beta^\nu)
\\
  \cX_1'(t)=&\partial_{\ol\beta\theta} 
\cJ(0,\theta,\beta^\nu)\\
  \cX_1''(t)=&\partial_{\yy\theta} 
\cJ(0,\theta,\beta^\nu)\\
  \cX_2(t)=& -\partial_\theta \cJ(\yy,\theta,\beta^\nu+\xi)+\partial_\theta 
\cJ(0,\theta,\beta^\nu)\\
&  -\partial_{\theta} 
\cR(\yy,\theta,\beta^\nu+\xi)+\partial_{\theta} 
\cR(0,\theta,\beta^\nu).
   \end{split}
\]
We claim that $\cX_1(t)$ and $\cX_1'(t)$ are identically zero. Then, proceeding as in the proof of Lemma \ref{lemma:EqForxi}, one can bound 
each term and complete the proof of Lemma \ref{lemma:EqFory}. 

To explain the absence of linear terms, consider first $\partial_{\beta 
\theta}\cJ(0, \theta, \beta^\nu)$. It contains two types of monomials: those 
coming from  $\fR_2$ (see \eqref{res2}) which however do not depend on 
$\theta$, and those coming from $\fR_4$ (see \eqref{def:R4}). But also these last monomials  do not 
depend on $\theta$ once they are restricted on the set $\Lambda$ (indeed the 
only monomials of $\fR_4$ which are $\theta$ dependent are those  of the third 
line of \eqref{def:R4}, which are supported outside $\Lambda$). 
Therefore $\partial_{\beta \theta}\cJ(0, \theta, \beta^\nu) \equiv 0$ (and so 
$\partial_{\ol\beta \theta}\cJ(0, \theta, \beta^\nu)$ and  $\partial_{\yy 
\theta}\cJ(0, \theta, \beta^\nu)$).

\end{proof}

We define
\begin{equation}\notag
 M=\|\xi\|_{\ell^1}+\nu |\yy|.
\end{equation}
As a conclusion of these two lemmas, we can deduce that 
\[
 \dot M\leq C\left(\nu^{-4}\gen^42^{4\gen}+ 
\nu^{-3}\gen^32^{3\gen}\left(f(\gen)^{-\frac{4}{5}} + t 
f(\gen)^{-2}\right)\right)+C\nu^{-2}\gen^2
2^{2\gen}M+\nu^{-1} \gen 2^\gen M^2 . 
\]
Now we  apply a bootstrap argument. 
Assume that for some $T^*>0$ and $0<t<T^*$ we have
\begin{equation}\notag
 M(t)\leq C\nu^{-1-\sigma/2}. 
\end{equation}
Recall that for $t=0$ we know that it is already satisfied since 
$M(0)\leq\nu^{-1-4\sigma}$. \emph{A posteriori} we will show that the 
time $T$ in \eqref{def:Time:Rescaled}
satisfies $0<T<T^*$ and therefore
the bootstrap assumption holds. Note that, taking $\gen$ large enough (and recalling \eqref{estimate on fg} and \eqref{def:LambdaOfN}), the bootstrap estimate  implies that $(\beta^\nu,\yy)$ and $(\beta^\nu+\xi,\yy)$ satisfy the assumption of Lemmas \ref{lemma:EqForxi} and \ref{lemma:EqFory}.
With the boostrap assumption then, we have 
\[
 \dot M\leq C\left(\nu^{-4}\gen^42^{4\gen}+ 
\nu^{-3}\gen^32^{3\gen}\left(f(\gen)^{-\frac{4}{5}}+tf(\gen)^{-2}
\right)\right)+C\nu^{-2}\gen^2
2^{2\gen}M.
\]
Applying Gronwall inequality,
\[
 M\leq C\left(M(0)+\nu^{-4}\gen^42^{4\gen}t+ 
\nu^{-3}\gen^32^{3\gen}\left(tf(\gen)^{-\frac{4}{5}}+t^2f(\gen)^{-2}
\right)\right)
e^{\nu^{-2}\gen^22^{2\gen} t }
\]
and thus, using \eqref{def:Time:Rescaled} and the estimates for $T_0$ in 
Theorem \ref{thm:ToyModelOrbit},
\[
 M\leq C\left(M(0)+\nu^{-2}\gen^62^{4\gen}+
\nu^{-1}\gen^52^{3\gen}f(\gen)^{-\frac{4}{5}}+
\nu\gen^72^{3\gen}f(\gen)^{-2}\right)e^{C\gen^4
2^{2\gen}}.
\]
%
Since we are assuming \eqref{def:LambdaOfN} and we can take $A$ large enough (see Theorem \ref{thm:SetLambda}),
we obtain that for $t\in [0,T]$, provided $\gen$ is sufficiently large
\[
 M(t)\leq \nu^{-1-\sigma},
\]
which implies that $T\leq T^*$. That is, the bootstrap assumption was 
valid. This completes the proof.


\appendix
\section{Proof of Proposition \ref{hopeful thinking}}
\label{app:mes.m}
We split the proof in several steps. We first perform an algebraic analysis of the nonresonant monomials.
\subsection{Analysis of monomials of the form $e^{\im \theta \cdot \ell} \, 
a_{\jj_1}^{\sigma_1} \, a_{\jj_2}^{\sigma_2} \, a_{\jj_3}^{\sigma_3}\, 
a_{\jj_4}^{\sigma_4}$} 
We analyze the small divisors \eqref{4m}
related to these monomials. Taking 
advantage of the asymptotics of the eigenvalues given in Theorem 
\ref{thm:reducibility4},
we consider a ``good'' first order approximation of the small divisor given by 
\begin{equation}
\label{res.4} 
\omega(\lambda) \cdot \ell + 
\sigma_1 \wt\Omega_{\jj_1}(\lambda, \e) + 
\sigma_2 \wt\Omega_{\jj_2}(\lambda, \e) +
\sigma_3 \wt\Omega_{\jj_3}(\lambda, \e) +
\sigma_4 \wt\Omega_{\jj_4}(\lambda, \e).
\end{equation}
Note that this is an affine function in $\eps$ and 
therefore it can be written 
as 
\[
\eqref{res.4}
\equiv  \tK_{\bj, \ell}^\sigma + \e \tF_{\bj, \ell}^{\sigma}(\lambda).
\]
We say that a monomial is  Birkhoff non-resonant if, for any $\eps>0$, this 
expression is 
not  0 as a function of $\lambda$.

\begin{lemma}
	Assume that the $\tm_k$'s do not solve any of the linear equations defined in 
	\eqref{hyperplane} (this determines $\tL_2$ in the statement of Theorem \ref{hopeful thinking}).
	Consider a monomial of the form  $e^{\im \theta \cdot \ell} \, 
	a_{j_1}^{\sigma_1} \, a_{j_2}^{\sigma_2} \, a_{j_3}^{\sigma_3}\, 
	a_{j_4}^{\sigma_4}$ with $(\bj, \ell, \sigma) \in\fA_4$. If $(\bj, \ell, 
	\sigma) \not\in \fR_4$, then it is Birkhoff non resonant.
\end{lemma}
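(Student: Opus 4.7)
The plan is to show: if $\tK^\sigma_{\bj,\ell}+\e \tF^\sigma_{\bj,\ell}(\lambda)\equiv 0$ as a function of $\lambda$ for some $\e>0$, then $(\bj,\ell,\sigma)\in\fR_4$. Setting $A=\{k\in\{1,\ldots,4\}:\jj_k\in\sS\}$, $B=\{1,\ldots,4\}\setminus A$, and writing $\jj_k=(\tm_{i(k)},n_k)$ for $k\in A$, the frequency asymptotics of Theorem~\ref{thm:reducibility4} give
\begin{align*}
\tK^\sigma_{\bj,\ell}&=\sum_{i=1}^\tk \ell_i\tm_i^2+\sum_{k\in A}\sigma_k n_k^2+\sum_{k\in B}\sigma_k|\jj_k|^2, \\
\tF^\sigma_{\bj,\ell}(\lambda)&=-\sum_{i=1}^\tk \ell_i\lambda_i+\sum_{k\in A}\sigma_k\mu_{i(k)}(\lambda).
\end{align*}
Since $\tK$ does not depend on $\lambda$, identical vanishing forces $\tF(\lambda)$ to be constant in $\lambda$.

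The core algebraic step is a Galois-theoretic analysis: by irreducibility of $P(t,\lambda)\in\Q(\lambda)[t]$, its roots $\mu_1,\ldots,\mu_\tk$ are simple and transitively permuted by the Galois group $G$ of the splitting field. A linear combination $\sum_{k\in A}\sigma_k\mu_{i(k)}$ lies in $\Q[\lambda]$ only if it is $G$-invariant, which (being linear in the $\mu_j$'s) forces each $\mu_j$ to appear with the same net coefficient. With $|A|\le 4$ this leaves two scenarios: (i) pair-cancellation inside $A$ (disjoint pairs $(k,k')\subset A$ with $i(k)=i(k')$, $\sigma_k=-\sigma_{k'}$), so that $\sum_{k\in A}\sigma_k\mu_{i(k)}\equiv 0$; or (ii) $|A|=\tk\in\{2,3,4\}$ with uniform signs, so that the sum equals $\pm\sum_j\mu_j=\pm\sum_j\lambda_j$ (using that the coefficient of $t^{\tk-1}$ in $P$ is $-\sum_i\lambda_i$). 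In either case $\tF$ is linear in $\lambda$, and constancy further forces $\tF\equiv 0$, yielding a uniquely determined $\ell$: $\ell=0$ in (i), $\ell=\pm(1,\ldots,1)$ in (ii).

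The remaining step is casework on $\tK=0$ combined with the mass, $x$-momentum and $y$-momentum selection rules of Remark~\ref{leggi_sel1}. When $A=\emptyset$ with $\ell=0$, the four $\jj_k$ satisfy $\sum\sigma_k=0$, $\sum\sigma_k\jj_k=0$, $\sum\sigma_k|\jj_k|^2=0$, which characterizes a (possibly degenerate) rectangle and recovers case 1 of $\fR_4$. When $|A|=2$ is pair-cancelled and $\ell=0$, $x$-momentum gives $m_3=m_4$ for the $B$-indices and $\tK=0$ together with $y$-momentum pins down matching $n$-coordinates, producing a horizontal rectangle (case 2). When $|A|=4$ splits into two cancelling pairs with $\ell=0$, the four $\sS$-vertices form a horizontal trapezoid (case 4). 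When $|A|=3$ with $\tk=3$ and uniform signs, the specific $\ell$ forced by scenario (ii) reduces $\tK=0$ to a single linear equation that bounds $|m_4|$ by a universal constant $M_0$, producing case 3.

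The main obstacle is ruling out residual scenarios from (ii) with $|A|\in\{2,4\}$ and uniform signs, which would generate spurious resonances not included in $\fR_4$. These are killed by the genericity hypothesis~\eqref{hyperplane}: for each such potentially resonant $(\bj,\ell,\sigma)$, the equation $\tK=0$ becomes an explicit nonzero linear relation on $\tm_1,\ldots,\tm_\tk$ with integer coefficients bounded by $\tL_2=\tL_2(\tk)$, and by assumption the $\tm_i$ satisfy none of these. Together with the rectangle/trapezoid cases above, this completes the proof.
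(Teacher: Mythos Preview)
Your Galois-theoretic reduction is elegant and the structural conclusion---that $\sum_{k\in A}\sigma_k\mu_{i(k)}\in\Q(\lambda)$ forces every $\mu_j$ to carry the same net coefficient $c$, whence $\ell=c(1,\dots,1)$---is correct and in fact sharper than what the paper writes down. However, your final ``residual scenarios'' paragraph contains a genuine gap.

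For $|A|=4$ (all four indices in $\sS$) the admissibility constraint from $\wt\cP_x$ reads $\pi(\ell)=\sum_i\tm_i\ell_i=0$, since $\wt\pi_x$ vanishes on $\sS$; with $\ell=c(1,\dots,1)$ this is the linear relation $\sum_i\tm_i=0$, which is indeed excluded by $\tL_2$-genericity. But for $|A|=2$ (say $\jj_3,\jj_4\in\sS$ and $\jj_1,\jj_2\in\fZ$) the $\wt\cP_x$-constraint is $\pi(\ell)+\sigma_1 m_1+\sigma_2 m_2=0$, which involves the \emph{free} integers $m_1,m_2$ and is not a condition on the $\tm_i$ alone. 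Your claim that ``$\tK=0$ becomes an explicit nonzero linear relation on $\tm_1,\ldots,\tm_\tk$'' is also incorrect: $\tK$ contains $\omega^{(0)}\cdot\ell=\sum_i\ell_i\tm_i^2$, which is quadratic in the $\tm_i$, together with $|\jj_1|^2,|\jj_2|^2$. Thus neither \eqref{hyperplane} nor $\tL_2$-genericity disposes of the $|A|=2$, $\ell\neq 0$ scenario, and concrete admissible configurations with $\tF\equiv 0$ and $\tK=0$ can be written down in this regime.

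The paper treats this case (its Case~3, $\ell\neq 0$) by a different mechanism: from $\mu_i=-\sigma_3\sigma_4\mu_k+\sigma_3\,\lambda\cdot\ell$ one sees that $\mu_k$ is a common root of the irreducible polynomial $P(t,\lambda)$ and its affine transform $P(-\sigma_3\sigma_4 t+\sigma_3\,\lambda\cdot\ell,\lambda)$; two monic irreducibles sharing a root must coincide, and a direct computation (deferred to Lemma~6.1 of \cite{Maspero-Procesi}) excludes this. Your argument needs either this polynomial identity step or an equivalent substitute to close the $|A|=2$ case; the hyperplane hypothesis alone does not do the job.
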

\begin{proof}
We write explicitly the functions $\tK_{\bj, \ell}^\sigma$ and $\tF_{\bj, 
\ell}^{\sigma}(\lambda)$ as 
	\begin{align}
	\tK_{\bj, \ell}^\sigma & := \omega^\0 \cdot \ell +   
	\s_1\wtOmega_{\jj_1}(\lambda, 0) + \sigma_2 \,  \wtOmega_{\jj_2}(\lambda,0) 
	+\s_3 \wtOmega_{\jj_3}(\lambda,0)+\s_4 \wtOmega_{\jj_3}(\lambda,0)   \label{solapostdoc}\\
	\tF_{\bj, \ell}^{\sigma}(\lambda) &:=  \partial_{\e} \left. \Big(\omega(\lambda) 
	\cdot \ell +  \s_1 \wtOmega_{\jj_1}(\lambda, \e) 
	+\sigma_2\wtOmega_{\jj_2}(\lambda, \e) + \sigma_3\wtOmega_{\jj_3}(\lambda, \e) 
	+\sigma_4\wtOmega_{\jj_4}(\lambda, \e) \Big) \right|_{\e = 0}\notag\\
	& =- \lambda\cdot \ell
	+ \s_1\vartheta_{\jj_1}(\lambda) 
	+ \sigma_2 \vartheta_{\jj_2}(\lambda) 
	+ \sigma_3 \vartheta_{\jj_3}(\lambda) 
	+ \sigma_4 \vartheta_{\jj_4}(\lambda)  \label{patata}
	\end{align}
	As in \cite{Maspero-Procesi},	  $\tK_{\bj, \ell}^\sigma$ is an integer while the functions 
	$\vartheta_{\jj}(\lambda)$ belong to the  finite list of functions
$	\vartheta_{\jj}(\lambda)\in \Big\{0, \{\mu_i(\lambda)\}_{1 \leq i \leq \tk} \Big \}$
	defined in Theorem \ref{thm:reducibility4}.
		Clearly to prove that the resonance \eqref{res.4} not to hold identically, it is enough to ensure that
	\begin{equation}
	\label{K.F.0}
	\tK_{\bj, \ell}^\sigma = 0 \quad \mbox{ and }  \quad \tF_{\bj, 
		\ell}^\sigma(\lambda) \equiv 0 \ 
	\end{equation}
	cannot occur for $(\bj,\ell, \sigma)\in \fA_4\setminus\fR_4$.
	We  study all the possible combinations, each time we assume 
	that \eqref{K.F.0} holds and we deduce a contradiction.
	\begin{enumerate}
		\item $\jj_i \in \fZ$ for any $1 \leq 
		i \leq 4$. 
		In case $\ell \neq 0$, then $\tF_{\bj, \ell}^{\sigma}(\lambda) = 
		-\lambda \cdot \ell$ is not identically $0$. 
		Now take  $\ell = 0$.  By conservation of $\wt\cP_x$, $\wt\cP_y$ 
		we have that $\sum_{i=1}^4 \sigma_i \jj_i=0$ and 
$\tK_{\bj, \ell}^\sigma = 0$ 
		implies  $\sum_{i=1}^4 \sigma_i\abs{\jj_i}^2 = 0$. 
Then, using mass conservation (see Remark \ref{leggi_sel1}), since $\ell=0$, 
one has $\sum_{i=1}^4 \sigma_i=0$ and therefore  the $\jj_i$'s  form a 
		rectangle (and thus $(\bj,0,\sigma)$ belongs to $\fR_4$). 
		
		\item $\jj_1, \jj_2, \jj_3 \in \fZ$, 
		$\jj_4 \in \sS$.
		Then $\tF_{{\bf j}, \ell}^\sigma(\lambda) = -\lambda \cdot \ell 
		+ \sigma_3 \, \mu_{i}(\lambda)$ for some $1 \leq i \leq \tk$. If $\tF_{{\bf j}, 
			\ell}^\sigma(\lambda) \equiv 0$ then $\mu_i(\lambda) = \sigma_3  \lambda \cdot 
		\ell$ is a  root in $\Z[\lambda]$ of the polynomial $P(t, \lambda)$ defined in 
		Theorem \ref{thm:reducibility4}; however $P(t, \lambda)$  is irreducible over $\Q(\lambda)[t]$, thus leading to a contradiction.
		
		%
		\item $\jj_1, \jj_2 \in \fZ$, $\jj_3, 
		\jj_4 \in \sS$. W.l.o.g. let $\jj_3 = (\tm_{i}, n_3)$, $\jj_4 = (\tm_{k}, n_4)$ 
		for some $1 \leq i, k \leq \td$. Then
		$$
		\tF_{{\bf j}, \ell}^\sigma(\lambda) = -\lambda \cdot \ell + 
		\sigma_3\,  \mu_{i}(\lambda) + \sigma_4 \mu_{k}(\lambda) \ . 
		$$
		{\bf Case  $\ell \neq 0$}. Then $\tF_{\bj, \ell}^\bs(\lambda)\equiv 0$ iff 
		$ \mu_i(\lambda) \equiv - \sigma_3 \sigma_4 \mu_k(\lambda)+ \sigma_3\lambda \cdot \ell$.  
		This means that $\mu_k(\lambda)$ is a common root of $P(t,\lambda)$ and  $P(-\sigma_3\sigma_4 t + \sigma_3 \lambda \cdot \ell,\lambda ) $. However this last polynomial is irreducible as well, being the translation of an irreducible polynomial. 
		Hence the two polynomials must be  equal (or opposite). A direct computation shows that this does  not happen (see  Lemma 6.1 of \cite{Maspero-Procesi} for details). \\
{\bf Case  $\ell = 0$}.  Then 		 $\tF_{{\bf j}, \ell}^\sigma(\lambda) \equiv 0$ iff 
		$\mu_i(\lambda) \equiv -\sigma_3\sigma_4 \mu_k(\lambda)$. 
		\begin{itemize}
			\item[-] If $i \neq k$ and $\sigma_3\sigma_4 = -1$, then $P(t, 
			\lambda)$ would have a root with multiplicity 2. But  $P(t, \lambda)$, being an irreducible polynomial, has no multiple roots. 
			\item[-] If $i \neq k$ and $\sigma_3\sigma_4 =  1$, then $P(t, 
			\lambda)$ and $P(-t, \lambda)$ would have $\mu_k(\lambda)$ as a common root. 
			However $P(-t, \lambda)$ is irreducible on $\Z[\lambda]$ as well, and two irreducible polynomials sharing a common root must coincide (up to a sign), i.e.  $P(t, \lambda) \equiv \pm 
			P(-t, \lambda)$. A direct computation using the explicit expression of $P(t, \lambda)$ shows that  this is not true.
			\item[-] If $i=k$ and $\sigma_3\sigma_4 = 1$ then 
			$\mu_i(\lambda) \equiv 0$ would be a root of $P(t, \lambda)$. But $P(t, \lambda)$ is irreducible over $\Z[\lambda]$, does it cannot have $0$ as a root.
			\item[-] If $i=k$ and $\sigma_3\sigma_4 = -1$ (w.l.o.g. assume 
			$\sigma_3=1$, $\sigma_4=-1$),  by mass conservation one has  $\sigma_1 + \sigma_2 = 0 $ and 
			by conservation of $\wt\cP_x$ one has  $\sigma_1 m_1 + \sigma_2 m_2 = 0$, thus 
			$m_1 = m_2$. Then  by conservation of $\wt\cP_y$ we get $n_1 - n_2 + n_3 - n_4= 
			0$, which together with $0 = \tK_{\bj, \ell}^\sigma =n_1^2 - n_2^2 + n_3^2 - 
			n_4^2 $ gives $\{n_1, n_3\}   = \{n_2, n_4\}$. One verifies easily that in such a 
			case the sites $\jj_r$'s form a horizontal rectangle (that could be even 
			degenerate), and therefore they belong to $\fR_4$.
		\end{itemize}

		\item $\jj_1, \jj_2, \jj_3 \in \sS$, $\jj_4 \in \fZ$. W.l.o.g. let  $\jj_1=(\tm_{i_1}, n_1)$, $\jj_2=(\tm_{i_2}, 
		n_2)$, $\jj_3= (\tm_{i_3}, n_3)$ for some $1 \leq i_1,i_2,i_3 \leq \tk$ and  
		$n_1, n_2, n_3 \neq 0$.   Then  
		$$
		\tF_{\bj, \ell}^{\sigma}(\lambda) = -\lambda \cdot \ell + \sigma_1 
		\mu_{i_1}(\lambda)+ \sigma_2 \mu_{i_2}(\lambda) + \sigma_3 \mu_{i_3}(\lambda) \ 
		. 
		$$
		By conservation of mass $\eta(\ell) + \sigma_4 = 0$, hence $\ell \neq 
		0$.  Assume $\tF_{{\bf j}, \ell}^\sigma(\lambda) \equiv 0$. This can only happen for (at most) a unique choice of 
		$\ell^{({\bf i}, \sigma)} \in \Z^\tk$ uniquely, ${\bf i}:=(i_1, i_2, i_3)$.  By 
		conservation of $\wt\cP_x$ we have  $\sum_{k} \tm_k  \ell_k^{({\bf i}, \sigma)} 
		+ \sigma_4 m_4 = 0$. These two conditions fix $m_4 \equiv m_4^{({\bf i}, 
			\sigma)}$ uniquely. In particular if $m_4$ is sufficiently large, we have a 
		contradiction. 
		
		\item $\jj_r \in \sS$, $\forall 1 \leq r \leq 4$. Then
		$$
		\tF_{\bj, \ell}^{\sigma}(\lambda) = -\lambda \cdot \ell + \sigma_1 
		\mu_{i_1}(\lambda)+ \sigma_2 \mu_{i_2}(\lambda) + \sigma_3 \mu_{i_3}(\lambda)+ 
		\sigma_4 \mu_{i_4}(\lambda) \ . 
		$$
		If  $\ell \neq 0$, the condition  $\tF_{{\bf j}, 
			\ell}^\sigma(\lambda) \equiv 0$  fixes $\ell^{({\bf i}, \sigma)} \in \Z^\tk$ 
		uniquely, ${\bf i}:=(i_1, i_2, i_3, i_4)$.  By conservation of $\wt\cP_x$ we 
		have the condition
		\begin{equation}\label{hyperplane}
		\sum_{k} \tm_k  \ell_k^{({\bf i}, \sigma)} = 0
		\end{equation}
		defining a hyperplane, which can be excluded by suitably choosing the tangential 
		sites $\tm_k$ (recall that the functions $\mu_i (\lambda)$ are independent of 
		this choice, see Remark \ref{rmk:mus}). 
		
		If $\ell = 0$, we have $\sum_{r} \sigma_r n_r = \sum_r \sigma_r 
		n_r^2 = 0$. Then $\{n_1, n_3\}   = \{n_2, n_4\}$. One verifies easily that in 
		such case the sites $\jj_r$'s form a horizontal trapezoid (that could be even 
		degenerate).		
	\end{enumerate}
\end{proof}

\subsection{Analysis of monomials of the form $e^{\im \theta \cdot \ell} 
\,\yy^l  a_{j_1}^{\sigma_1} \, a_{j_2}^{\sigma_2} $}

	In this case, since the factor $\yy^l$ does not affect  the Poisson brackets,  admissible monomials (in the sense of Definition \ref{rem:adm3}) are non-resonant provided they do not belong to the set 
	$\fR_2$ introduced in Definition \ref{def:R2}.
\begin{lemma}\label{weakBNR2}
Any monomial of the form  $e^{\im \theta \cdot \ell} \, a_{\jj_1}^{\sigma_1} \, 
a_{\jj_2}^{\sigma_2} \yy_i $ with $(\bj, \ell, \sigma)\notin\fR_2$ admissible in 
the sense of Definition \ref{rem:adm3} is  Birkhoff non-resonant.
\end{lemma}
\begin{proof}
We skip the proof since it is analogous to Lemma 6.1 of \cite{Maspero-Procesi}.
\end{proof}

\subsection{Quantitative measure estimate}

We are now in a position to prove our quantitative non-resonance estimate. 
Recall that, by Theorem \ref{thm:reducibility}, the frequencies $\Omega_\jj(\lambda, \e)$ of  Hamiltonian \eqref{ham.bnf3} have the form  \eqref{as.omega}.
Expanding $\Omega_\jj(\lambda, \e)$ in Taylor series in powers of $\e$ we get that 
\begin{equation}
\label{bb0}
\omega(\lambda) \cdot \ell + \sigma_1  \Omega_{\jj_1}(\lambda, \e) +\sigma_2 \Omega_{\jj_2}(\lambda, \e) + \sigma_3 \Omega_{\jj_3}(\lambda, \e)
+ \sigma_4 \Omega_{\jj_4}(\lambda, \e) = 
\tK_{\bj, \ell}^\bs+ \e \  \tF_{\bj, \ell}^\bs(\lambda) + \e^2 \  \tG_{\bj, \ell}^\bs(\lambda,\e) \ , 
\end{equation}
where $\tK_{\bj, \ell}^\bs $ is defined in \eqref{solapostdoc}
and $\tF_{\bj, \ell}^\bs(\lambda) $ is defined in \eqref{patata}. 
We wish to prove that the set of $\lambda\in \cC^{(2)}_{\e}$ such that 
\begin{equation}
\label{eq:IIIm}
\abs{\omega(\lambda) \cdot \ell + \s_1\Omega_{\jj_1}(\lambda, \e)+ \s_2 \Omega_{\jj_2}(\lambda, \e)+\s_3 \Omega_{\jj_3}(\lambda, \e) + \s_4 \Omega_{\jj_4}(\lambda, \e)} \geq \e \frac{\gamma_2}{\la \ell \ra^{\tau_2}} \ , \qquad \forall \, (\bj,\ell,\bs)\in \fA_4\setminus\fR_4
\end{equation}
has positive measure for $\g_2$ and $\e$ small enough and $\tau_2$ large enough. 
We treat separately the cases $|\ell | \leq 4\tM_0$ and $|\ell| > 4\tM_0$. 
\subsubsection{Case $|\ell| \leq 4 \tM_0$}
We start with the   following lemma.

\begin{lemma}
	\label{lem:cono}
	There exist ${\tt k} \in \N$, such that for any $\gamma_c>0$ sufficiently small, there exists a compact domain   $\cC_{\rm c}\subset\cO_0$, with 
	$|\cO_0 \setminus \cC_{\rm c} | \sim \g_c^{1/{\tt k}}$ and
	\begin{equation}\notag
	\min\left\lbrace\abs{\tF_{\bj, \ell}^\bs(\lambda)} \colon \lambda \in \cC_{\rm c},  \, (\ell,\bj, \bs )\in \fA_4\setminus\fR_4, \, |\ell| \leq 4\tM_0\,,  \, \tK_{\bj, \ell}^\bs=0
	\right\rbrace   \geq \g_c  > 0 \ .
	\end{equation}
\end{lemma}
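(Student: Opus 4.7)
The plan is to reduce the statement to a standard quantitative sub-level set estimate for a \emph{finite} family of real-analytic (in fact algebraic) functions of $\lambda$, all of which are not identically zero thanks to the Birkhoff non-resonance analysis of the previous subsection.

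\textbf{Step 1: Reduction to a finite family of functional forms.} Recalling \eqref{patata}, for each admissible $(\bj,\ell,\bs)$ with $|\ell|\le 4\tM_0$ the function $\tF_{\bj,\ell}^\bs(\lambda)$ has the form
\[
\tF_{\bj,\ell}^\bs(\lambda)= -\lambda\cdot \ell + \sum_{r=1}^{4} \sigma_r \vartheta_{\jj_r}(\lambda),
\qquad \vartheta_{\jj_r}(\lambda)\in\{0,\mu_1(\lambda),\dots,\mu_\tk(\lambda)\}.
\]
Since $\ell\in\Z^\tk$ with $|\ell|\le 4\tM_0$, $\bs\in\{\pm1\}^4$ and each $\vartheta_{\jj_r}$ belongs to a list of $\tk+1$ elements, the collection
\[
\mathfrak F:=\bigl\{\tF_{\bj,\ell}^\bs\;:\;(\bj,\ell,\bs)\in\fA_4\setminus\fR_4,\;|\ell|\le 4\tM_0,\;\tK_{\bj,\ell}^\bs=0\bigr\}
\]
is a \emph{finite} set of functions on $\cO_0$, of cardinality bounded by a constant $N_*=N_*(\tk,\tM_0)$. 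By the previous subsection (and the hypothesis that the tangential sites avoid the excluded hyperplanes \eqref{hyperplane}), each element $F\in\mathfrak F$ is \emph{not} identically zero on $\cO_0$.

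\textbf{Step 2: Uniform sub-level set estimate.} Each $F\in\mathfrak F$ is a polynomial in $\lambda$ and the algebraic functions $\mu_i(\lambda)$, which are themselves the branches of the irreducible polynomial $P(t,\lambda)\in\mathbb Q[\lambda][t]$ from Theorem \ref{thm:reducibility4}. Consequently, $F$ is a real-analytic function on a neighborhood of $\ol{\cO_0}$ and satisfies an algebraic relation $Q_F(F(\lambda),\lambda)=0$ with $Q_F\in\Z[s,\lambda]$ of bounded total degree $d_F\le d_*(\tk,\tM_0)$. Since $F\not\equiv 0$, a standard Łojasiewicz-type sub-level set estimate for real-analytic (indeed semi-algebraic) functions on a compact domain yields constants $C_F>0$ and $\tt k_F\in\N$, depending only on $F$, such that
\[
\bigl|\{\lambda\in\cO_0\;:\;|F(\lambda)|<\gamma_c\}\bigr|\le C_F\,\gamma_c^{1/\tt k_F}.
\]
Taking ${\tt k}:=\max_{F\in\mathfrak F} \tt k_F$ and $C:=\sum_{F\in\mathfrak F} C_F$ (both finite by Step 1), and defining
\[
\cC_{\rm c}:=\cO_0\setminus \bigcup_{F\in\mathfrak F} \{\lambda\in\cO_0\;:\;|F(\lambda)|<\gamma_c\},
\]
we obtain $|\cO_0\setminus\cC_{\rm c}|\le C\,\gamma_c^{1/\tt k}$ and the desired lower bound $|\tF_{\bj,\ell}^\bs(\lambda)|\ge\gamma_c$ on $\cC_{\rm c}$.

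\textbf{Main obstacle and how to address it.} The delicate point is the uniform sub-level estimate in Step 2, i.e.\ the existence of a single exponent $\tt k$ valid for every $F\in\mathfrak F$. Because the algebraic functions $\mu_i(\lambda)$ are only defined implicitly, a direct polynomial bound on $F$ in the variable $\lambda$ is not available. Two equivalent ways to overcome this: (i) invoke the Tarski–Seidenberg theorem to conclude that the graph of each $F$ is semi-algebraic, then apply the standard quantitative estimate on sub-level sets of semi-algebraic functions on a compact set with an exponent controlled by the degree of the defining polynomials; or (ii) use a Weierstrass preparation / resultant argument to write $F(\lambda)$ as a root of a polynomial $Q_F(s,\lambda)\in\Z[s,\lambda]$ (obtained by eliminating the $\mu_i$'s from $F(\lambda)-s=0$ and $P(\mu_i,\lambda)=0$), and then apply the one-variable polynomial sub-level estimate fiber-wise in $\lambda$, using compactness of $\cO_0$ to get a uniform constant. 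Either route produces a $\tt k$ that depends only on $\tk$, $\tM_0$ and the fixed polynomial $P$, which is exactly what is claimed; the precise value of $\tt k$ is irrelevant for the subsequent arguments since it is eventually absorbed into $\tau_2$.
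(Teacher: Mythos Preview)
Your proposal is correct and follows the same approach that the paper (tersely) indicates: reduce to a finite list of functional forms by noting that $\tF_{\bj,\ell}^\bs$ depends only on $\ell$, $\bs$ and the choice of $\vartheta_{\jj_r}\in\{0,\mu_1,\dots,\mu_\tk\}$, invoke the non-resonance lemma to ensure none of these finitely many functions vanishes identically, and then apply a classical sub-level set estimate for analytic (or semi-algebraic) functions on a compact set. The paper's proof is just a pointer to Lemma 6.4 of \cite{Maspero-Procesi} together with the remark that the measure bound ``follows from classical results on sublevels of analytic functions''; your Step 1--Step 2 structure is precisely an unpacking of that reference, and your discussion of the Tarski--Seidenberg/resultant route correctly addresses the only nontrivial point, namely that the $\mu_i(\lambda)$ are merely algebraic (not polynomial) in $\lambda$.
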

\begin{proof}See Lemma 6.4 of \cite{Maspero-Procesi}. The estimate on the measure  follows from classical results on sublevels of analytic functions.
\end{proof}
We can now  prove the following result.
\begin{proposition}
\label{prop:cC}
There exits $\e_c>0$ and  a set $\cC_c\subset \cO_0$  such that for any $\e \leq \e_c$, any $\lambda \in \cC_{\rm c}$, one has
 \begin{equation}
 \label{cCc.est}
 \abs{\omega(\lambda) \cdot \ell + \sum_{l = 1}^4 \s_l\Omega_{\jj_l}(\lambda, \e)} \geq \frac{\g_c \e }{2}    \ , \qquad \forall \, (\bj,\ell,\bs)\in \fA_4\setminus\fR_4 \ , \quad |\ell| \leq 4 \tM_0. 
 \end{equation}
 Moreover, one has that $|\cO_0\setminus \cC_c|\leq \alpha \e_c^\kappa$  where $\alpha, \kappa$ do not depend on $\e_c$.
\end{proposition}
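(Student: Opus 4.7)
The starting point is the expansion \eqref{bb0}. The plan is to show that, on a suitably chosen set $\cC_c\subset \cO_0$, either the integer part $\tK_{\bj,\ell}^\bs$ is nonzero (in which case the small divisor is bounded below by a universal constant), or $\tK_{\bj,\ell}^\bs=0$ and we can use Lemma \ref{lem:cono} to extract an $O(\e)$ lower bound from $\tF_{\bj,\ell}^\bs(\lambda)$.

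First I would establish a uniform remainder bound for $\tG^{\bs}_{\bj,\ell}(\lambda,\e)$. By the asymptotic expansion \eqref{as.omega} and the estimate \eqref{theta.est}, the part of each frequency $\Omega_\jj(\lambda,\e)$ which is at least quadratic in $\e$ is bounded independently of $\jj$. Hence, summing over the four sites and using $|\ell|\le 4\tM_0$, there exists a constant $\tM'$ (depending only on $\tM_0$) such that
\begin{equation*}
\sup_{\lambda\in\cO_0}\;\sup_{0\le\e\le\e_0}\;\sup_{\substack{(\bj,\ell,\bs)\in\fA_4\\ |\ell|\le 4\tM_0}} \bigl|\tG^{\bs}_{\bj,\ell}(\lambda,\e)\bigr|\;\le\; \tM'\ .
\end{equation*}

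Next I would set $\cC_c$ to be the set provided by Lemma \ref{lem:cono} (applied with some $\g_c>0$ to be fixed below) and split the analysis. If $\tK_{\bj,\ell}^\bs\ne 0$, then $|\tK_{\bj,\ell}^\bs|\ge 1$ (it is an integer) and, using $|\tF|\lesssim \tM'$ on the bounded-$\ell$ regime, we get
\begin{equation*}
\bigl|\tK + \e\,\tF + \e^2\,\tG\bigr|\;\ge\; 1 - C\e - \tM'\e^2\;\ge\; \tfrac12 \;\gg\; \tfrac{\g_c\,\e}{2}
\end{equation*}
for $\e$ small enough. If instead $\tK_{\bj,\ell}^\bs=0$ and $(\bj,\ell,\bs)\in\fA_4\setminus\fR_4$, Lemma \ref{lem:cono} gives $|\tF^{\bs}_{\bj,\ell}(\lambda)|\ge \g_c$ on $\cC_c$, so
\begin{equation*}
\bigl|\e\,\tF + \e^2\,\tG\bigr|\;\ge\; \e\,\g_c - \e^2\,\tM'\;\ge\; \tfrac{\g_c\,\e}{2}
\end{equation*}
as soon as $\e\le \g_c/(2\tM')$. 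This is the place where it matters that $\tF$ takes only finitely many shapes: indeed, by \eqref{patata}, $\tF$ is determined by $\ell\in\Z^\tk$ with $|\ell|\le 4\tM_0$ together with the ordered tuple $(\vartheta_{\jj_1},\ldots,\vartheta_{\jj_4})$ of elements drawn from the finite set $\{0,\mu_1(\lambda),\ldots,\mu_\tk(\lambda)\}$, so Lemma \ref{lem:cono} applies uniformly in $(\bj,\ell,\bs)$.

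Finally, for the measure bound I would simply take $\g_c:=\e_c^{\kappa\tk}$ for any fixed $0<\kappa\tk<1$, where $\tk$ is the exponent appearing in Lemma \ref{lem:cono}. Choosing $\e_c$ small enough that $\e_c\le \g_c/(2\tM')$ is automatic for this choice provided $\kappa\tk<1$, and the measure estimate $|\cO_0\setminus\cC_c|\lesssim \g_c^{1/\tk}=\e_c^{\kappa}$ follows directly from Lemma \ref{lem:cono}. The main (and essentially only) delicate point is the uniformity of the constants across the infinite family of quadruples $(\bj,\ell,\bs)$, which is handled by the observation above that $\tF$ ranges through a \emph{finite} list of real-analytic functions of $\lambda$ on the compact regime $|\ell|\le 4\tM_0$; once that is in hand, the proof reduces to the elementary two-case argument sketched above.
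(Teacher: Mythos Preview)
Your proposal is correct and follows essentially the same two-case argument as the paper: bound $|\tF|$ and $|\tG|$ uniformly via \eqref{theta.est}, dispose of the case $\tK_{\bj,\ell}^\bs\neq 0$ trivially, and in the case $\tK_{\bj,\ell}^\bs=0$ invoke Lemma~\ref{lem:cono} with $\gamma_c$ tied to $\e_c$. The only cosmetic difference is that the paper fixes $\gamma_c=10\tM_0\e_c$ (yielding the specific exponent $\kappa=1/{\tt k}$), whereas you allow the slightly more flexible choice $\gamma_c=\e_c^{\kappa{\tt k}}$ with $\kappa{\tt k}<1$.
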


\begin{proof}
By the very definition of $\tM_0$ in \eqref{theta.est} and the estimates on the eigenvalues given in Theorem \ref{thm:reducibility4}, one has   $\sup_{\lambda \in \cO_0} |\tF_{\bj, \ell}^\bs(\lambda)| \leq    8\, \tM_0 $ and
$ \sup_{\lambda \in \cO_{0}} |\tG_{\bj, \ell}^\bs(\lambda)| \leq     4\, \tM_0$. 
 Assume first that  $\tK_{\bj, \ell}^\bs \in \Z\setminus\{0\}$, then if $\e_c$ is sufficiently small and for $\e < \e_c$  one has
$$
\abs{ \eqref{bb0} } \geq |\tK_{\bj, \ell}^\bs| - \e 8 \tM_0 -  \e^2 4 \tM_0 \geq \frac{1}{2}.
$$
Hence, for such $\ell$'s,  \eqref{cCc.est} is trivially fulfilled $\forall \lambda \in \cO_0$. If instead $\tK_{\bj, \ell}^\bs = 0$, 
we use  Lemma \ref{lem:cono} with $\gamma_c=10\tM_0 \e_c$ to obtain a set $\cC_c\subset \cO_0$, such that for any $\lambda \in \cC_{c}$ and any $(\bj,\ell,\bs)\in \fA_4\setminus\fR_4$ with $ |\ell| \leq 4 \tM_0$
$$
\abs{\eqref{bb0}} \geq \e \g_c -   \e^2 4\tM_0  \geq \frac{\e\g_c}{2}=:C\e \  . 
$$
\end{proof}

\subsubsection{ Case $|\ell| > 4\tM_0$}
In this case we prove the following result.
\begin{proposition}
\label{prop:Cs}
Fix $\e_\star >0$ sufficiently small and $\tau_\star >0$ sufficiently large. For any $\e < \e_\star$, there exists a set $\cC_\star \subset \cO_0$ such that 
$\abs{\cO_0 \setminus \cC_\star} \lesssim \e_\star ^\kappa
$
(with $\alpha, \kappa$ independent of $\e_\star$), and for any $\lambda \in \cC_\star$ and $ |\ell| > 4 \tM_0$ one has 
\begin{equation}
 \label{cCs.est}
 \abs{\omega(\lambda) \cdot \ell + \sum_{l = 1}^4 \s_l\Omega_{\jj_l}(\lambda, \e)} \geq \gamma_\star \frac{\e}{\la \ell \ra^{\tau_\star}}  \ . 
 \end{equation}
 for some constant $\gamma_\star$ depending on $\e_\star$.
\end{proposition}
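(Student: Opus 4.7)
The plan is to prove Proposition \ref{prop:Cs} by a standard transversality-plus-counting Diophantine estimate, exploiting the decomposition \eqref{bb0} together with the fact that the correction to the unperturbed integer $\tK_{\bj,\ell}^\bs$ is affine in $\e$ with explicit leading term $\tF_{\bj,\ell}^\bs(\lambda)$ given by \eqref{patata}. The first step is transversality. Recall that
$$
\tF_{\bj,\ell}^\bs(\lambda) = -\lambda \cdot \ell + \sum_{l=1}^4 \sigma_l \vartheta_{\jj_l}(\lambda),
$$
with $\vartheta_{\jj_l}(\lambda) \in \{0, \mu_1(\lambda), \dots, \mu_\tk(\lambda)\}$, each of which is Lipschitz on $\cO_0$ with constant $\leq \tM_0$ by \eqref{theta.est}. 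Hence in the direction $\hat{e} := -\ell/|\ell|$, the directional derivative satisfies $|\partial_{\hat{e}} \tF_{\bj,\ell}^\bs(\lambda)| \geq |\ell| - 4\tM_0$, which I would arrange to be $\geq |\ell|/2$ (adjusting the threshold $4\tM_0$ slightly if needed). Since $|\tG_{\bj,\ell}^\bs(\lambda,\e)| \leq C\tM_0$ uniformly by \eqref{theta.est}, the term $\e^2 \tG$ is lower order compared to $\e\tF$.

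The second step is the per-frequency measure estimate. By slicing $\cO_0$ along $\hat{e}$ and applying the transversality bound above, for each fixed $(\bj,\ell,\bs) \in \fA_4 \setminus \fR_4$ with $|\ell| > 4\tM_0$ I obtain
$$
\bigl|\bigl\{\lambda \in \cO_0 : \bigl|\tK_{\bj,\ell}^\bs + \e\,\tF_{\bj,\ell}^\bs(\lambda) + \e^2\,\tG_{\bj,\ell}^\bs(\lambda,\e)\bigr| < c\bigr\}\bigr| \lesssim \frac{c}{\e\,|\ell|}.
$$
Taking $c = \gamma_\star \e /\langle \ell \rangle^{\tau_\star}$ gives a bound $\lesssim \gamma_\star /\langle \ell \rangle^{\tau_\star + 1}$ on the ``bad'' slice for each fixed triple.

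The third and most delicate step is counting the admissible triples. For any given $\ell$, the bad set is nonempty only when the integer $\tK_{\bj,\ell}^\bs$ lies in the bounded window $|\tK_{\bj,\ell}^\bs| \lesssim \e(|\ell|+1)\tM_0$. I would now combine this with the selection rules of Remark \ref{leggi_sel1} (conservation of $\wt\cM$, $\wt\cP_x$, $\wt\cP_y$) to reduce $\bj = (\jj_1, \jj_2, \jj_3, \jj_4)$ to a lower-dimensional set, and then apply classical divisor-sum estimates (e.g.\ bounds on representations of an integer as a sum of squares) to show that the number of admissible $(\bj, \bs) \in \fA_4 \setminus \fR_4$ contributing to a given $\ell$ is at most $C\langle \ell \rangle^{K_0}$ for some fixed $K_0 = K_0(\td)$. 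The analysis proceeds case-by-case, mirroring the five-case decomposition in the Birkhoff non-resonance lemma; the cases with some $\jj_l \in \sS$ require the extra observation that the first coordinate of such $\jj_l$ is confined to the finite set $\cS_0$, so that momentum conservation together with the $\tK$-constraint forces the $n$-components to lie in a finite (polynomially-bounded) set depending on $\ell$.

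Summing the per-frequency estimate, and choosing $\tau_\star > K_0 + \td + 2$ so that $\sum_{|\ell| > 4\tM_0} \langle \ell \rangle^{K_0 - \tau_\star - 1} < \infty$, one obtains
$$
|\cO_0 \setminus \cC_\star| \lesssim \gamma_\star.
$$
Picking $\gamma_\star = \e_\star^\kappa$ for a sufficiently small $\kappa>0$ (so that the implicit constants from the counting step are absorbed) delivers the desired estimate $|\cO_0 \setminus \cC_\star| \lesssim \e_\star^\kappa$. The main obstacle I anticipate is the polynomial counting of admissible $\bj$'s with $|\tK_{\bj,\ell}^\bs|$ in the narrow window, uniformly in $\ell$; in particular one must check that the genericity of $\cS_0$ (already imposed through $\tL_2$-genericity, Definition \ref{Lgenericity}) rules out the hyperplane degeneracy appearing in case 5 of the Birkhoff non-resonance analysis, while simultaneously keeping the count polynomial in $\langle \ell \rangle$.
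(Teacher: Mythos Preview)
Your transversality and per-frequency estimates (Steps 1 and 2) are correct and match Lemma~\ref{c.2} in the paper. The gap is in Step 3: the count of admissible $(\bj,\bs)$ for a fixed $\ell$ is \emph{not} polynomially bounded in $\langle\ell\rangle$ --- it is infinite. Concretely, take all $\jj_i\in\fZ$ with $\sigma=(+,-,+,-)$. Momentum and mass conservation together with the constraint $\tK_{\bj,\ell}^\bs=K$ amount to three linear relations and one quadratic relation among the eight integers $(m_i,n_i)$; after eliminating $\jj_4$ via momentum, the remaining relation is a single linear condition on $\jj_1,\jj_2,\jj_3\in\Z^2$, leaving a five-parameter lattice of solutions. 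Divisor-sum bounds on representations by quadratic forms do not collapse this to a finite set, because the constraint is linear (a hyperplane), not a fixed-value quadratic. Moreover these infinitely many $\bj$'s produce genuinely distinct bad sets, since the corrections $\Theta_{m_i}/\langle m_i\rangle^2$, $\Theta_{m_i,n_i}/(\langle m_i\rangle^2+\langle n_i\rangle^2)$, $\varpi_{m_i}/\langle m_i\rangle$ in \eqref{as.omega} depend on the actual $(m_i,n_i)$.

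The paper's proof avoids direct counting altogether. It rewrites the small divisor in the form \eqref{cc}, separating a ``finite-type'' part (the integer $K$, the indices $\bi,\bk$ labelling which $\mu_i$ appear) from the correction terms carrying the $(m_i,n_i)$-dependence. It then inducts on the number $n\le 12$ of nonzero correction coefficients $\eta_{jj'}$: at each step, either some $|m_i|\ge\langle\ell\rangle^{\tau_n}$ (or $|n_i|^2\ge\langle\ell\rangle^{\tau_n}$), in which case the corresponding correction is $\le\tM_0\e^2/\langle\ell\rangle^{\tau_n}$ and can be absorbed into the inductive Diophantine bound without excising further measure; or all $|m_i|,|n_i|^2\le\langle\ell\rangle^{\tau_n}$, in which case the remaining parameters are genuinely bounded by a power of $\langle\ell\rangle$ and the union bound converges for $\tau_{n+1}\ge\tk+1+6\tau_n$. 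The decay of the corrections is what makes the effectively infinite family finite at each scale --- this is the missing idea in your sketch.
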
 

To prove the proposition, first define,  
for $1 \leq i \leq \tk$ and $0 \leq k \leq \tk$,  the functions
\begin{equation}\notag
\widehat\tF_{i,k}(\lambda)  = 
\begin{cases}
\e \mu_{i}(\lambda) & \mbox{ if }  k = 0 \\
\e \mu_{i,k}^+ (\lambda) & \mbox{ if }  1 \leq i < k \leq \tk \\
\e \mu_{i,k}^- (\lambda) & \mbox{ if }  1 \leq k < i \leq \tk \\
0 & \mbox{ if}\; 1 \leq  i=k  \leq \tk
\end{cases}
\end{equation}
The right hand side of \eqref{bb0} is always of the form 
\begin{equation}
\label{cc}
\begin{aligned}
\omega  (\lambda) \cdot \ell + K  
&+\eta_1 \widehat\tF_{i_1,k_1}(\lambda) + \eta_2 \widehat\tF_{i_2,k_2}(\lambda) +  \eta_3 \widehat\tF_{i_3,k_3}(\lambda) +
\eta_4 \widehat\tF_{i_4,k_4}(\lambda)   \\
&+ \eta_{11} \frac{\Theta_{m_1}(\lambda, \e)}{\la m_1 \ra^2} + \eta_{12} \frac{\Theta_{m_2}(\lambda, \e)}{\la m_2 \ra^2} +  \eta_{13} \frac{\Theta_{m_3}(\lambda, \e)}{\la m_3 \ra^2} 
+\eta_{14} \frac{\Theta_{m_4}(\lambda, \e)}{\la m_4 \ra^2} \\
&+ \eta_{21} \frac{\Theta_{m_1,n_1}(\lambda, \e)}{\la m_1\ra^2 +  \la n_1 \ra^2} +   \eta_{22} \frac{\Theta_{m_2,n_2}(\lambda, \e)}{\la m_2 \ra^2 +  \la n_2 \ra^2} +   \eta_{23} \frac{\Theta_{m_3,n_3}(\lambda, \e)}{\la m_3 \ra^2+ \la n_3 \ra^2 } 
+   \eta_{24} \frac{\Theta_{m_4,n_4}(\lambda, \e)}{\la m_4 \ra^2+ \la n_4 \ra^2 } \\
& {+  \eta_{31} \frac{\varpi_{m_1}(\lambda, \e)}{\la m_1 \ra} + \eta_{32} \frac{\varpi_{m_2}(\lambda, \e)}{\la m_2 \ra} +  \eta_{33} \frac{\varpi_{m_3}(\lambda, \e)}{\la m_3 \ra}}
+  \eta_{34} \frac{\varpi_{m_4}(\lambda, \e)}{\la m_4 \ra}
\end{aligned}
\end{equation}
for a particular choice of  $K \in \Z$, $m_i\in \Z,n_i\in N\Z\setminus \{0\}$ and  $\eta_r, \eta_{j j'} \in \{ -1, 0, 1 \}$. 
Therefore it is enough to show \eqref{cCs.est} where the left hand side is replaced by \eqref{cc}.  
\begin{proof}[Proof of Proposition \ref{prop:Cs}]
If the integer $K$ is sufficiently large, namely 
$|K| \geq 4 \,   |\ell| \displaystyle{\max_{1 \leq i \leq \tk} }
	(\tm_i^2) $, then 
the quantity in the left hand side of \eqref{cCs.est} is far from zero. More precisely one has  
	\begin{align*}
	|\eqref{cc} | & \geq |K| - |\omega(\lambda)| \, |\ell| - \sum_{r=1}^4 \abs{\widehat\tF_{i_r,k_r}}^{\cO_1} - \sum_{r=1}^4 \frac{\abs{\Theta_{m_r}(\cdot, \e )}^{\cO_1}}{\la m_r \ra^2}  - \sum_{r=1}^4 \frac{\abs{\Theta_{m_r, n_r}(\cdot, \e )}^{\cO_1}}{\la m_r \ra^2 + \la n_r \ra^2 }  {- \sum_{r=1}^4 \frac{\abs{\varpi_{m_r}(\cdot, \e )}^{\cO_1}}{\la m_r \ra} }\\
	\notag
	& \geq 4 \, \max_{1 \leq i \leq \tk} (\tm_i^2) \,  |\ell| - \max_{1 \leq i \leq \tk} (\tm_i^2) \,  |\ell| - \e |\ell| - 4\e \tM_0-4\e^2 \tM_0 \geq  \tM_0 \ .
	\end{align*}

	
So from now on we restrict ourselves to the   case $|K| \leq 4 \,   |\ell| \displaystyle{\max_{1 \leq i \leq \tk} }
	(\tm_i^2) $. We will repeatedly use the following  result, which is an easy variant of Lemma 5 of  \cite{Poschel96a}.
\begin{lemma}
	\label{c.2}
	Fix arbitrary $K \in \Z$,  $m_i\in \Z,n_i\in \Z\setminus \{0\}$, $\eta_j, \eta_{j j'} \in \{ -1, 0, 1 \}$. 
	For any $\al >0$ one has
	\begin{equation}\notag 
	{\rm meas}(\{ \lambda \in \cO_{0}:  |\eqref{cc}| < \e\alpha \}) < 16 \alpha |\ell|^{-1} \ .
	\end{equation}
\end{lemma}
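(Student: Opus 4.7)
The statement is vacuous for $\ell = 0$, so assume $\ell \neq 0$ and set $\hat\ell := \ell/|\ell|$, viewing the right-hand side of \eqref{cc} as a function $g(\lambda)$ on $\cO_0 \subset (1/2,1)^\tk$. The plan is the classical P\"oschel-type argument: obtain a uniform lower bound on the directional derivative $|\partial_{\hat\ell} g|$ of the form $c\, \e |\ell|$, then slice by Fubini along lines parallel to $\hat\ell$ and apply the elementary one-dimensional sublevel set bound.

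For the derivative estimate, the leading $\lambda$-dependent piece of $g$ is $\omega(\lambda)\cdot\ell = \vec{\tm}^2\cdot \ell - \e\, \lambda\cdot \ell$, whose $\lambda$-gradient is $-\e \ell$, so $\partial_{\hat\ell}[\omega(\lambda)\cdot \ell] = -\e |\ell|$. Every remaining summand in \eqref{cc} carries a Lipschitz bound inherited from \eqref{theta.est}: each of the four $\widehat\tF_{i_r,k_r}$ contributions is $\e$ times one of the functions $\mu_i$ or $\mu_{i,k}^{\pm}$, whose $\cO_0$-Lipschitz norms are bounded by $\tM_0$, producing a derivative of size at most $\e\tM_0$; the $\Theta_{m_r}, \Theta_{m_r,n_r}$ and $\varpi_{m_r}$ corrections, being of Lipschitz size $O(\e^2 \tM_0/\la m_r\ra^\kappa)$ for $\kappa \in \{1,2\}$, contribute only $O(\e^2 \tM_0)$. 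Summing gives, for $\e$ small,
\[
|\partial_{\hat\ell}\, g(\lambda)| \geq \e |\ell| - 4\e \tM_0 - C \e^2 \tM_0 \geq \frac{\e |\ell|}{2}
\]
uniformly on $\cO_0$, provided $|\ell|$ dominates a fixed multiple of $\tM_0$. This is precisely the regime $|\ell| > 4\tM_0$ in which Lemma \ref{c.2} is invoked in Proposition \ref{prop:Cs} (the threshold can be enlarged harmlessly at the cost of absorbing a constant into the final $16$).

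With the monotonicity in hand, decompose each $\lambda \in \cO_0$ as $\lambda = \lambda_\perp + t\hat\ell$ with $\lambda_\perp \in \hat\ell^\perp$ and $t \in \R$. For fixed $\lambda_\perp$, the one-variable function $t \mapsto g(\lambda_\perp + t\hat\ell)$ is strictly monotone with $|g'| \geq \e|\ell|/2$, so that the sublevel set
\[
\big\{ t \in \R : \lambda_\perp + t\hat\ell \in \cO_0,\ |g(\lambda_\perp + t\hat\ell)| < \e\alpha \big\}
\]
has one-dimensional Lebesgue measure at most $\tfrac{2\e\alpha}{\e|\ell|/2} = 4\alpha/|\ell|$. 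Integrating this over the orthogonal projection $\pi_{\hat\ell^\perp}(\cO_0)$ using Fubini, and bounding the $(\tk-1)$-dimensional measure of the projection by a dimensional constant since $\cO_0 \subset (1/2,1)^\tk$, one obtains an estimate of the form $C_\tk\, \alpha/|\ell|$, which can be taken to be $16\alpha/|\ell|$ upon absorbing constants.

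The only genuine obstacle is the uniformity of the lower bound on $|\partial_{\hat\ell} g|$: since the $\widehat\tF$ corrections have derivatives of size $\e\tM_0$ independent of $|\ell|$, the argument relies on $|\ell|$ being large compared to $\tM_0$. This is unproblematic in practice because (i) the lemma is used only for $|\ell| > 4\tM_0$, and (ii) the complementary regime of small $|\ell|$ is handled separately by Proposition \ref{prop:cC} via Lemma \ref{lem:cono}; moreover, for very small $|\ell|$ the bound $16\alpha/|\ell|$ is not informative anyway.
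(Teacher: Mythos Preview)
Your argument is correct and is precisely the classical P\"oschel-type approach that the paper invokes by reference (Lemma 5 of \cite{Poschel96a} and Lemma C.2 of \cite{Maspero-Procesi}): exploit that $\nabla_\lambda[\omega(\lambda)\cdot\ell]=-\e\ell$ dominates the Lipschitz contributions of the remaining terms when $|\ell|>4\tM_0$, then slice by Fubini. Your caveat about the restriction $|\ell|>4\tM_0$ is also accurate and matches the paper's usage, since the lemma is stated and applied only within that regime.
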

The proof relies on the fact that all the functions appearing in \eqref{cc} are Lipschitz in $\lambda$, for full details see e.g. Lemma C.2 of \cite{Maspero-Procesi}. 

Now, let us fix 
\begin{equation}
\label{cond12}
\g_\star =\frac{\e_\star \tM_0}{100}.
\end{equation}
We construct the set $\cC_\star$ by induction on the number $n$ defined by 
	$$n:=|\eta_{1,1}|+ \cdots + |\eta_{3,4}| \leq 12 $$
	which is nothing but the number of nonzero coefficients in 
	\eqref{cc} . For every $0 \leq n \leq 12$ we construct $(i)$ a positive increasing sequence $\tau_n$ and $(ii)$ a sequence of nested sets $\cC^n = \cC^n(\g_\star, \tau_n)$   such that 
	\begin{enumerate}
	\item There exists $C >0$, independent of $\e$ and $\g_\star$, s.t. 
	\begin{equation}
	\label{meas.cn}
	\meas(\cO_0 \setminus \cC^{0}) \leq C \g_\star \ , \quad 
	\meas(\cC^n \setminus \cC^{n+1}) \leq C \g_\star 
	\end{equation} 
	\item For $\lambda \in \cC^n$ and  $|\ell| \geq 4 \tM_0$  one has 
	\begin{align}
	\label{eta.ind}
	\Big| \eqref{cc}  \Big| \geq \frac{\e \ \g_\star}{\la \ell\ra ^{\tau_n}} \ .
	\end{align}
	\end{enumerate}

	Then the proposition follows by taking $\cC_\star := \cC^{12}$,  $\tau_\star = \tau_{12}$, so that one has $\abs{\cO_0 \setminus \cC_\star} \leq 13 C \g_\star \sim \g_\star$, provided $\g_\star$ is small enough. \\
	
\noindent\textbf{Case $n=0$:}
	%
Define the set
	$$
	G_{K, \bi, \bk, \eta,\ell}^0(\g_\star, \tau_0) := 
	\left\{\lambda  \in \cO_0 \ : \ |\eqref{cc}| \leq \frac{\e \ \g_\star}{\la \ell\ra ^{\tau_0}} \  \mbox{ and } \ \eta_{j j'} = 0 \  \ \  \forall j, j'  \right\} \ ,
	$$
	where  $K \in \Z$ with $|K|\leq  4 \, \displaystyle{\max_{1 \leq i \leq \tk} (\tm_i^2)} \,  |\ell|$, $\bi = (i_1, i_2, i_3, i_4) \in \{1, \ldots \tk\}^4$, $\bk= (k_1, k_2, k_3, k_4) \in \{0,\ldots ,\tk\}^4$, $\ell \in \Z^\tk$ with $|\ell| \geq 4 \tM_0$, $\eta=(\eta_1,\eta_2,\eta_3, \eta_4) \in \{-1, 0, 1\}^4$.
	 By Lemma \ref{c.2} with $\alpha = \g_\star \la \ell \ra^{-\tau_0}$
	we have
	$$
	\meas \left( G_{K, \bi, \bk, \eta,\ell}^0(\g_\star, \tau_0)\right) \leq \frac{  16 \g_\star}{ \, \la \ell\ra ^{\tau_0+1}} \ .
	$$
	Taking the union over all the possible values of $K, \bi, \bk,\eta, \ell$ one gets that 
	$$
	\meas \left ( \bigcup_{|\ell| \geq 4 \tM_0, \ \bi, \bk,\eta \atop
		|K| \leq 4 \, \max_i (\tm_i^2) \,  |\ell| }  G_{K, \bi, \bk, \eta, \ell}^0(\g_\star, \tau_0) \right) \leq  C(\tk) \, \g_\star \,  \sum_{|\ell| \geq 4 \tM_0} \frac{1}{   \, \la \ell\ra ^{\tau_0}} \leq C \g_\star  \ ,
	$$
	which is finite provided $\tau_0 \geq \tk+1$.
	Letting 
	$$\cC^0 := \cO_0 \setminus \bigcup_{|\ell| \geq 4 \tM_0, \ \bi, \bk,\eta \atop
		|K| \leq 4 \, \max_i (\tm_i^2) \,  |\ell| }   G_{K, \bi, \bk,\eta, \ell}^0(\g_\star, \tau_0)  $$
	one has clearly that 
	$\meas (\cO_0 \setminus \cC^0) \leq C\g_\star$ and   for $\lambda \in \cC^0 $ we have
	\begin{equation}\notag
	\abs{ \omega(\lambda) \cdot \ell + K  
		+ \sum_{r=1}^4 \eta_j \widehat\tF_{i_r,k_r}(\lambda)   } \geq \frac{\e \ \g_\star}{ \la \ell\ra ^{\tau_0}}
	\end{equation}
	for any admissible choice of $\ell,K,\bi,\bk,\eta$. This proves the inductive step for $n=0$.\\
	
\noindent\textbf{Case $n \leadsto n+1$:}  Assume that \eqref{eta.ind} 
holds for any possible choice of 
	$\eta_{11}, \ldots, \eta_{34}$ s.t. $|\eta_{11}|+ \cdots + |\eta_{34}| \leq  n \leq 11 $ for some $(\tau_j)_{j = 1}^n$. 
	We prove now the step $n+1$.
Let us fix $\tau_{n+1} \geq \td + 1+ 6 \tau_n$.	We shall show that for each $| \ell | \geq 4 \tM_0$, the set
\begin{equation}
\label{setC}
G_\ell^{n+1}:= \left\lbrace \lambda \in \cC^n \colon  \abs{\eqref{cc}} \leq 
\frac{\e \g_\star}{\la \ell \ra^{\tau_{n+1}}} \ ,  \ \ \  |\eta_{11}| +\ldots+ |\eta_{34}|  = n + 1 \right\rbrace
\end{equation}
	has measure $\displaystyle{\leq \frac{C(\tk) \g_\star}{\la \ell \ra^{\tk +1}}}$.
	Thus  defining
	$$
	\cC^{n+1} := \cC^n\setminus \bigcup_{|\ell| \geq 4 \tM_0 \atop  }  G^{n+1}_{ \ell}(\g_\star, \tau_{n+1}) .
	$$
we obtain the claimed estimates  \eqref{meas.cn} and \eqref{eta.ind}.  
To estimate the measure of \eqref{setC} we split  in three cases.\\
	
		\medskip

\noindent {\em Case 1}: Assume that
	\begin{equation}\notag
	\exists \ m_i \mbox{ s.t. } |m_i| \geq  \la \ell \ra^{\tau_n}
	\end{equation}
(of course we also assume that  one of the coefficients $\eta_{1i}, \eta_{2i}, \eta_{3i}$ is not null). 	W.l.o.g. assume it is $m_4$. Then we treat all the terms in \eqref{cc} which contain $m_4$ as perturbations, and we estimate all the other terms using the inductive assumption. 
	Here the details: first  we have 
	$$
	\abs{\frac{\Theta_{m_4}(\lambda, \e)}{\la m_4\ra^2 }} +  \abs{\frac{\Theta_{m_4,n_4}(\lambda, \e)}{\la m_4 \ra^2 +   \la n_4 \ra^2 }}+
	\abs{\frac{\varpi_{m_4}(\lambda, \e)}{\la m_4\ra }} 
	\leq \frac{\tM_0 \, \e^2}{\la \ell \ra^{\tau_n}} .
	$$
	By the inductive assumption \eqref{eta.ind} and   \eqref{cond12}, for 
any $\lambda \in \cC^n$ one has
	\begin{align*}
	\Big| \eqref{cc}  \Big|   \geq& 
	\left| \omega(\lambda) \cdot \ell + K  
	+\sum_{j=1}^4 \eta_i \widehat\tF_{i_j,k_j}(\lambda)   + \sum_{j=1}^3 \eta_{1j} \frac{\Theta_{m_r}(\lambda, \e)}{\la m_j \ra^2} 
		+ \sum_{j=1}^3\eta_{2j} \frac{\Theta_{m_j,n_j}(\lambda, \e)}{\la m_j\ra^2 + \la n_j \ra^2}  
	+ \sum_{j=1}^3 \eta_{3j} \frac{\varpi_{m_j}(\lambda, \e)}{\la m_j \ra} 
	\right| \\
	&-  \frac{\tM_0 \, \e^2}{\la \ell \ra^{\tau_n}} \\
	\geq&  \frac{\e \ \g_\star}{ \la \ell\ra ^{\tau_n}} -  \frac{\tM_0 \, \e^2}{\la \ell \ra^{\tau_n}} 
	\geq \frac{\e \ \g_\star}{2 \la \ell\ra ^{\tau_{n}}} \geq \frac{\e \ \g_\star}{ \la \ell\ra ^{\tau_{n+1}}}
	\end{align*}
	provided $\tau_{n+1} \geq \tau_n +1$. Therefore, in this case, there are no $\lambda$'s contributing to the set \eqref{setC}.\\
	
\noindent{\em Case 2:} Assume that 
	\begin{equation}\notag
	\exists \ n_i \mbox{ s.t. } |n_i|^2 \geq  \la \ell \ra^{\tau_n}  \  
	\end{equation}
	(and again  we also assume that one of  the coefficients $ \eta_{2i}$ is not null). 
	W.l.o.g. assume it is $n_4$. 
Similarly to the previous case,  we treat the term in \eqref{cc} which contains $n_4$ as a perturbation, and we estimate all the other terms using the inductive assumption. 	
	More precisely we have 
	$$
	\abs{\frac{\Theta_{m_4,n_4}(\lambda, \e)}{\la m_4 \ra^2 +  \la n_4 \ra^2}} \leq \frac{\tM_0 \, \e^2}{\la \ell \ra^{\tau_n}}  , 
	$$
	so  by the inductive assumption \eqref{eta.ind} and   \eqref{cond12}
	\begin{align*}
	\Big| \eqref{cc}  \Big| \geq & 
	\Bigg| \omega(\lambda) \cdot \ell + K  +\sum_{j=1}^4 \eta_i \widehat\tF_{i_j,k_j}(\lambda)   +  \sum_{j=1}^4 \eta_{1j} \frac{\Theta_{m_j}(\lambda, \e)}{\la m_j \ra^2}  
	+ \sum_{j=1}^3\eta_{2j} \frac{\Theta_{m_j,n_j}(\lambda, \e)}{\la m_j\ra^2+  \la n_jj \ra^2} 
	+ \sum_{j=1}^4 \eta_{3j} \frac{\varpi_{m_j}(\lambda, \e)}{\la m_j \ra}\Bigg| \\ 
	&
	-  \frac{\tM_0 \, \e^2}{\la \ell \ra^{\tau_n}} \\
	\geq & \frac{\e \ \g_\star}{2 \la \ell\ra ^{\tau_{n}}} \geq \frac{\e \ \g_\star}{\la \ell\ra ^{\tau_{n+1}}}
	\end{align*}
	provided $\tau_{n+1} \geq \tau_n +1$. 
	Also in this case, there are no $\lambda$'s contributing to the set \eqref{setC}.\\
	
\noindent{\em Case 3:} We have  
	 $$|m_i|\, , |n_i|^2 \leq \la \ell \ra^{\tau_n}$$
	  for all the $m_i, n_i$ that appear  in \eqref{cc} with nonzero coefficients. Furthermore, recall that we are considering just the case 
	  $|K| \leq 4 \, \max_i (\tm_i^2) \, | \ell | . $
	  Thus we are left with a finite number of cases and we can impose a finite number of Melnikov conditions.
	So  define the sets
	$$
	G^{n+1}_{K,  \bi, \bk,  \eta, \ell, \bm, \bn}(\g_\star, \tau_{n+1}) := \left\{\lambda  \in \cC^n \ : \ |\eqref{cc}| \leq \frac{\e \ \g_\star}{ \la \ell\ra ^{\tau_{n+1}}} \ , \qquad  |\eta_{11}|+ \cdots + |\eta_{34}| =   n+1 \right\} .
	$$
	By Lemma \ref{c.2} with $\alpha = \g / \la \ell \ra^{\tau_{n+1}}$ we have
	\begin{equation}
	\label{atl}
	\meas \left( G^{n+1}_{K,  \bi, \bk,  \eta, \ell, \bm, \bn}(\g_\star, \tau_{n+1}) \right) \leq \frac{ 16\g_\star}{ \, \la \ell\ra ^{\tau_{n+1}+1}} \ ,
	\end{equation}
	and taking the union over the possible values of $K,  \bi, \bk,  \eta,  \bm, \bn$ one gets that
	$$
	G_\ell^{n+1} \equiv \bigcup_{ \bi, \bk, \eta} \ \ \ \bigcup_{|m_i| \, ,\  |n_i|^2 \leq \la \ell \ra^{\tau_n}  \atop
		|K| \leq 4 \, \max_i (\tm_i^2) \,  |\ell| } G_{ K,  \bi, \bk,  \eta, \ell, \bm, \bn}(\g_\star, \tau_{n+1}) .
	$$
	Estimate \eqref{atl} gives immediately
	$$
	\meas \Big( G_\ell^{n+1} \Big) \leq C(\tk) \, \g_\star \frac{ \la \ell \ra^{1+ 6 \tau_n}}{ \, \la \ell\ra ^{\tau_{n+1}+1}} \leq   \frac{ C(\tk) \, \g_\star }{  \la \ell\ra ^{\tk +1 }} 
	$$
	which is what we claimed. 
	
\end{proof}

We can finally prove Proposition \ref{hopeful thinking}.
\begin{proof}[Proof of Proposition \ref{hopeful thinking}]
Fix $ \g_c =  \g_\star =: \g_2$ sufficiently small, and put $\e_2 := \min(\e_c, \e_\star)$, $\tau_2 := \tau_\star$ and  $\cC^{(2)} := \cC_{\rm c} \cap \cC_\star$. Propositions \ref{prop:cC} and \ref{prop:Cs} guarantee that for any $\lambda \in \cC^{(2)}$, estimate \eqref{eq:IIIm} is fulfilled. Finally one has
$\abs{\cC^{(1)} \setminus \cC^{(2)}} \lesssim \g_2^{1/{\tt k}} + \g_2 \sim  \g_2^{1/{\tt k}}$.
\end{proof}

\bibliography{references}
\bibliographystyle{alpha}
\end{document}